\newtheorem{Theorem}{Theorem}[section]
\newtheorem{Lemma}[Theorem]{Lemma}
\newtheorem{Proposition}[Theorem]{Proposition}
\newtheorem{Corollary}[Theorem]{Corollary}
\newtheorem*{ntheorem}{Theorem}
\theoremstyle{definition}
\newtheorem{definition}[Theorem]{Definition}
\newtheorem{Remark}[Theorem]{Remark}
\newtheorem{example}[Theorem]{Example}
\numberwithin{equation}{section}
\newcommand{\dep}{\operatorname{dep}}
\DeclareMathOperator{\trdeg}{\mathrm{tr\text{.}deg}}
\title[On Taylor coefficients of Anderson-Thakur series]{On algebraic independence of Taylor coefficients of certain Anderson-Thakur series}
\author{Daichi Matsuzuki}
\email{m19044h@math.nagoya-u.ac.jp}
\address{Graduate School of Mathematics, Nagoya University, 
Furo-cho, Chikusa-ku, Nagoya, 464-8602, Japan}
\date{\today}
\begin{document}

\begin{abstract}
In this paper, we study algebraic independence problem for the Taylor coefficients of the Anderson-Thakur series which arise as deformation series of positive characteristic multiple zeta values (abbreviated as MZV's). These Taylor coefficients are simply specializations at $t=\theta$ of hyperderivatives of the Anderson-Thakur series. We consider the prolongations of $t$-motives associated with MZV's, and then determine the dimension of the $t$-motivic Galois groups in question under certain hypotheses. By virtue of Papanikolas' theory, it enables us to obtain the desired algebraic independence result.

\end{abstract}

\maketitle
\tableofcontents

\section{Introduction}

\subsection{Motivation}
In this paper, we mainly investigate an algebraic independence problem for Taylor coefficients of deformation series that arise from multiple zeta values (abbreviated as MZV's) over function fields in positive characteristic. The motivation of our study is as follows. We first let $A$ be the polynomial ring in the variable $\theta$ over the finite field $\mathbb{F}_q$ of $q$ elements in characteristic $p$, an analogue of the ring of integers $\mathbb{Z}$. Inspired by the real MZV's defined as follows:
\begin{equation}
\zeta(s_{1},\ldots,s_{d}):=\sum_{m_1>\cdots>m_d>0} \frac{1}{m_1^{s_1}\cdots m_d^{s_d}} \in \mathbb{R}, (\text{$s_1 \geq 2$ and $s_2,\,\dots,\,s_d \geq 1$}),
\end{equation}
Thakur~\cite{ThakurBook} introduced positive characteristic analogues of MZV's. For a given tuple $\mathbf{s}=(s_1,\,\dots,\,s_d)$ of positive integers, called an \textit{index}, he put
\begin{equation}
    \zeta_A(s_1,\,\dots,\,s_d):=\sum_{\substack{a_1,\,\dots,\,a_d \in A\text{: monic}\\\deg a_1>\cdots >
    \deg a_d \geq0}} \frac{1}{a_1^{s_1}\cdots a_d^{s_d}} \in \mathbb{F}_q((1/\theta)).
\end{equation}
The number $\operatorname{dep}(\mathbf{s}):=d$ and the sum $\operatorname{wt}(\mathbf{s}):=s_1+\cdots+s_d$ are respectively called \textit{depth} and \textit{weight} of the representation $\zeta_A(\mathbf{s})$.
In what follows, Thakur's MZV's are simply referred to as MZV's if no confusion can arise.

Real MZV's have garnered interests of many researchers since they appear in various areas of mathematics such as arithmetic geometry (\cite{Brown2012}, \cite{Deligne2005}, and \cite{Terasoma2002}), low-dimensional topology \cite{Le1995}, and mathematical physics \cite{Broadhurst1995}. 
There are few known transcendence results of MZV's. For example, $\zeta(2n)$ is transcendental because of transcendence of $\pi$ and Euler's formula for each $n \geq 1$
, $\zeta(3)$ is proven to be irrational by Apery, and Zudilin~\cite{Zudilin2001} proved that at least one of $\zeta(5),\,\zeta(7),\,\zeta(9),\,\zeta(11)$ is irrational. 
Considering transcendence and algebraic independence of general MZV's seems to be a difficult problem.

It is well known that Thakur's multiple zeta values have similar properties as those of ones in characteristic $0$. For example,  MZV's form an $\mathbb{F}_q(\theta)$-algebra by Thakur's $q$-shuffle product formula (\cite{Thakur2010}) analogously to the real case, in which real MZV's form a $\mathbb{Q}$-algebra by shuffle or harmonic product.
Carlitz~\cite{Carlitz1935} proved that the value $\zeta_A(n)$ can be written as a product of $\tilde{\pi}^{n}$ and an explicit element of $\mathbb{F}_q(\theta)$ for positive multiple $n$ of $q-1$. Here, $\tilde{\pi}$ is the Carlitz period (see Example \ref{Omega} for the definition), which are proven to be transcendental over $\mathbb{F}_q(\theta)$ by Wade~\cite{Wade1941}.
Based on the transcendence theory developed by Yu~\cite{Yu1997}, the so-called Yu's sub-$t$-module theorem, Anderson-Brownawell-Papanikolas~\cite{Anderson2004}, the so-called ABP-criterion, and Papanikloas~\cite{Papanikolas2008}, there are good developments regarding transcendence, linear and algebraic independence problems for Thakur's MZV's over the years.

For single zeta values (MZV's of depth one which are also called Carlitz zeta values), their transcendence was known by Yu~\cite{Yu1991}, and all algebraic relations was completely determined by Chang and  Yu~\cite{Chang2007} using Papanikolas' theory~\cite{Papanikolas2008}. For higher depth MZV's, Chang~\cite{Chang2014} used ABP-criterion~\cite{Anderson2004} to prove that there do not exist non-trivial $\mathbb{F}_q(\theta)$-linear relations among MZVs with different weights, whence all MZV's are transcendental over $\mathbb{F}_q(\theta)$.

Recently, two significant conjectures on linear independence of MZV's in positive characteristic: Todd's dimension conjecture and Thakur's basis conjecture, were solved independently in \cite{Chang2023} and \cite{Im2022}. The former conjecture predicts the dimension of the $\mathbb{F}_q(\theta)$-linear subspace of $\mathbb{F}_q((1/\theta))$ spanned by MZV's of a fixed weight and is a positive characteristic analogue of Zagier's dimension conjecture for real multiple zeta values. The latter gives an $\mathbb{F}_q(\theta)$-basis of the subspace generated by multiple zeta values of fixed weight and is an analogue of Hoffman's basis conjecture. This means that we now have a description of all $\mathbb{F}_q(\theta)$-linear relations among MZV's.
There are also Mishiba's works \cite{Mishiba2015a} and \cite{Mishiba2015} on algebraic independence of certain families of MZV's.

This paper focuses on the Taylor coefficients of Anderon-Thakur series, which are deformation series of Thakur's MZV's. In their fundamental work~\cite{Anderson1990}, for each index $\bf{s}$, Anderson and Thakur constructed the Anderson-Thakur series $\zeta_{A}^{\mathrm{AT}}({\bf{s}})$, which is a power series in the variable $t$ with coefficients algebraic over $\mathbb{F}_q((1/\theta))$ showed that the constant term of the Taylor expansion of $\zeta_{A}^{\mathrm{AT}}$ at $t=\theta$ gives $\zeta_{A}({\bf{s}})$ (up to an explicit scalar multiple in $\mathbb{F}_q(\theta)$). 
We mention that their Taylor coefficients are also important values as Chang, Green, and Mishiba showed that these Taylor coefficients of Anderson-Thakur series in question have logarithmic illustrations, see \cite{Chang2021a} for details.
The motivation of our study in this paper is to investigate whether the (higher) Taylor coefficients of $\zeta_{A}^{\mathrm{AT}}({\bf{s}})$ are algebraically independent over $\mathbb{F}_q(\theta)$ or not, and our main result, stated as Theorem \ref{MainB}, answers this question under some hypotheses in terms of  $q$ and $p$, the characteristic of the prime field.

\subsection{The statement of the main result}

In order to state our main result, we consider the Taylor expansion
\begin{equation}
    \Omega=\sum \alpha_n (t-\theta)^n
\end{equation}
of $\Omega$ (see Example \ref{Omega} for the definition). 
For each index $\mathbf{s}=(s_1,\,\dots,\,s_r)$, we also consider the associated Anderson-Thakur series $\zeta_A^{\mathrm{AT}}(\mathbf{s})$ given in Definition \ref{DefATseries} and its Taylor expansion
\begin{equation}
     \zeta_A^{\mathrm{AT}}(\mathbf{s})=\sum \alpha_{\mathbf{s},\,n}(t-\theta)^n,
\end{equation}
further, we define the following set:
\begin{equation}
\operatorname{Sub}^\prime(\mathbf{s}):= \{ (s_{i_1},\,s_{i_2},\,\dots,\,s_{i_d}) \mid 1\leq d \leq r,\, 1\leq i_1<\cdots<i_d\leq r \}. \label{subprime}
\end{equation}
The main results of this paper is the following:
\begin{ntheorem}(Theorem \ref{MainB})
   Fix any integer $n \geq 0$. We consider any index $\mathbf{s}=(s_1,\,\dots,\,s_r)\in \mathbb{Z}_{\geq 1}^{r}$ satisfying that $s_1,\,\dots,\,s_r$ are distinct, and $p \nmid s_i,\,(q-1) \nmid s_i$ for $1\leq i\leq r$. Then the field extension over $\overline{\mathbb{F}_q(\theta)}$ generated by
    \begin{equation}
        \{ \alpha_{n^\prime} ,\, \alpha_{\mathbf{s}^\prime,\,n^\prime} \mid 0\leq n^\prime  \leq n,\,\mathbf{s}^\prime \in \operatorname{Sub}^\prime(\mathbf{s}) \}
    \end{equation}
    has transcendental degree $(n+1)\left(\# \operatorname{Sub}^\prime(\mathbf{s}) +1\right)=(n+1)(2^r)$ over $\overline{\mathbb{F}_q(\theta)}$. Equivalently, the set above is algebraically independent over $\overline{\mathbb{F}_q(\theta)}$.
\end{ntheorem}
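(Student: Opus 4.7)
The plan is to realize the listed Taylor coefficients as periods of a suitable pre-$t$-motive, compute the dimension of the associated $t$-motivic Galois group, and then invoke Papanikolas' theorem, which identifies the transcendence degree of the field generated by these periods with that dimension.

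First, I would build the relevant pre-$t$-motive in two stages. Stage one: attach to each sub-index $\mathbf{s}' \in \operatorname{Sub}'(\mathbf{s})$ the Anderson-Thakur $t$-motive $M_{\mathbf{s}'}$ whose period matrix encodes $\Omega$ and $\Omega \cdot \zeta_A^{\mathrm{AT}}(\mathbf{s}')$, and form the direct sum $M_{\mathbf{s}} := \bigoplus_{\mathbf{s}'} M_{\mathbf{s}'}$. The hypotheses of the theorem (the $s_i$ are distinct and each is coprime to $p$ and to $q-1$) are precisely the standard conditions guaranteeing that all MZV's $\zeta_A(\mathbf{s}')$ indexed by $\operatorname{Sub}'(\mathbf{s})$ are non-Eulerian and carry no $\mathbb{F}_q(\theta)$-linear relations beyond the obvious ones across distinct weights; under these hypotheses, the $t$-motivic Galois group $\Gamma_{M_{\mathbf{s}}}$ is known to sit as a maximal unipotent extension $\mathbb{G}_m \ltimes \mathbb{G}_a^{2^r - 1}$ of the Carlitz motive by the $\zeta$-components, and in particular has dimension $2^r$ (this piece of input is essentially due to Chang--Mishiba type arguments already in the literature).

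Stage two: replace $M_{\mathbf{s}}$ by its $n$-th prolongation $P^n M_{\mathbf{s}}$, i.e.\ the $t$-motive whose Frobenius matrix is obtained by hyperdifferentiating the defining matrix $n$ times in $t$. The crucial point, due to Maurischat's formalism of prolongations, is that the period matrix of $P^n M_{\mathbf{s}}$ has entries the hyperderivatives $\partial_t^{(j)}$ ($0 \le j \le n$) of the entries of the original period matrix, evaluated at $t=\theta$. These specializations are exactly the Taylor coefficients $\alpha_{n'}$ and $\alpha_{\mathbf{s}',n'}$ listed in the theorem (up to a scalar arising from Leibniz's rule applied to $\Omega \cdot \zeta_A^{\mathrm{AT}}(\mathbf{s}')$, which can be untangled inductively since the lower Taylor coefficients of $\Omega$ already lie in the period field). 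Hence it suffices to show
\[
\dim \Gamma_{P^n M_{\mathbf{s}}} \;=\; (n+1) \cdot 2^r.
\]

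Here lies the main obstacle. The upper bound $\dim \Gamma_{P^n M_{\mathbf{s}}} \le (n+1) \dim \Gamma_{M_{\mathbf{s}}}$ is automatic from the prolongation construction: functoriality of $P^n$ and faithfulness of the fiber functor embed $\Gamma_{P^n M_{\mathbf{s}}}$ into the $n$-th jet group $J^n \Gamma_{M_{\mathbf{s}}}$, whose dimension is $(n+1)\dim\Gamma_{M_{\mathbf{s}}}$. For the lower bound I would argue as follows. Because $\Gamma_{M_{\mathbf{s}}}$ is an extension of $\mathbb{G}_m$ by a vector group $V \cong \mathbb{G}_a^{2^r - 1}$ and the weight-graded structure of the coordinates of $V$ corresponds to the weights $\operatorname{wt}(\mathbf{s}')$, I can apply the prolongation functor componentwise: $J^n \mathbb{G}_m$ has dimension $n+1$, $J^n V \cong V^{n+1}$ is abelian of dimension $(n+1)(2^r-1)$, and the extension class of $J^n \Gamma_{M_{\mathbf{s}}}$ is non-degenerate provided that no new algebraic relations appear among the hyperderivatives. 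To rule out such new relations, I would follow the Hardouin-type strategy: any proper subgroup scheme of $J^n \Gamma_{M_{\mathbf{s}}}$ stable under the Galois action would correspond to a proper sub-pre-$t$-motive of $P^n M_{\mathbf{s}}$ containing $P^n \mathbf{1}$; the Frobenius equation satisfied by the prolongations, together with the distinctness of weights $s_1,\ldots,s_r$ and the hypotheses $p \nmid s_i$, $(q-1) \nmid s_i$, would then force such a sub-object to be trivial, exactly as in the non-prolonged case.

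Once the dimension equality is established, Papanikolas' theorem gives
\[
\trdeg_{\overline{\mathbb{F}_q(\theta)}} \overline{\mathbb{F}_q(\theta)}\bigl(\alpha_{n'},\,\alpha_{\mathbf{s}',n'}\bigr) \;=\; \dim \Gamma_{P^n M_{\mathbf{s}}} \;=\; (n+1) \cdot 2^r,
\]
which matches the cardinality of the set in the statement, yielding algebraic independence. The main difficulty, as indicated, is the explicit identification of $\Gamma_{P^n M_{\mathbf{s}}}$ with the full jet group $J^n \Gamma_{M_{\mathbf{s}}}$; the rest is formal from the machinery of Anderson-Thakur series, Maurischat's prolongations, and Papanikolas' Tannakian framework.
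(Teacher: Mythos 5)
Your overall architecture — realize the Taylor coefficients as periods of prolongations, bound the Galois group above by an explicit jet-type group, and convert a dimension count into algebraic independence via Papanikolas' theorem — is the same as the paper's, and the upper-bound and period-interpretation parts of your sketch are sound. The gap is in the lower bound, which you correctly identify as the main obstacle but then dispose of with the claim that a Hardouin-type sub-object argument "would force such a sub-object to be trivial, exactly as in the non-prolonged case." This cannot be right as stated: the non-prolonged case (the $0$-th Taylor coefficients) holds without the hypothesis $p \nmid s_i$, whereas for higher Taylor coefficients that hypothesis is indispensable — if $s_1 = ps'$ the relation $\zeta_A^{\mathrm{AT}}(ps') = \gamma\, \zeta_A^{\mathrm{AT}}(s')^p$ hyperdifferentiates to a nontrivial $K$-linear relation between $\beta_{\mathbf{s},0}$ and $\beta_{\mathbf{s},1}$ (see the closing Remark of the paper). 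So any correct proof must use $p \nmid s_i$ at a specific point in the lower-bound argument, and your sketch never says where or how; the assertion that the extension class of the jet group is "non-degenerate provided that no new algebraic relations appear" is circular, since the absence of new relations is exactly what is to be proved.

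Concretely, the paper closes this gap by a double induction over pairs $(i,m)$ (sub-index counter, prolongation order), prolonging one summand by one extra order at each step so that the kernel $\mathcal{V}_{i,m}$ of $\Gamma_{M(i,m)} \twoheadrightarrow \Gamma_{M'(i,m)}$ sits inside a single copy of $\mathbb{G}_a$; it then exhibits explicit elements $R, Q_b$ of $\Gamma_{M(i,m)}$ (lifts of explicit elements of the smaller group, available by the induction hypothesis) whose commutator $RQ_bR^{-1}Q_b^{-1}$ lands in $\mathcal{V}_{i,m}$ with entry $s_i b$ in the relevant slot. This entry sweeps out all of $\mathbb{G}_a$ precisely because $s_i \not\equiv 0 \pmod p$ — that is where the hypothesis enters. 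Your one-shot passage from $\Gamma_{M_{\mathbf{s}}}$ to $\Gamma_{P^n M_{\mathbf{s}}}$ forgoes this codimension-one structure, so even if you tried to run a sub-object or commutator argument you would face a large-codimension unipotent subgroup whose possible shapes (Frobenius-twisted hyperplanes, per Conrad's classification of subgroups of vector groups) are not excluded by weight considerations alone. To repair the proposal you would need to (i) set up an incremental filtration of the prolonged motive so that each induction step adds one $\mathbb{G}_a$, and (ii) supply the explicit group-theoretic computation showing that $p \nmid s_i$ forces that $\mathbb{G}_a$ to be realized.
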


Note that the spirit of the theorem above asserts that under the given hypothesis there, the union of the first $n+1$ Taylor coefficients of the expansion of $\zeta_{A}^{AT}(\mathbf{s}')$ for $\mathbf{s}'\in \operatorname{Sub}^\prime(\mathbf{s})$ is an algebraically independent set over $\overline{\mathbb{F}_q(\theta)}$.

\begin{Remark}
    In a private discussion with Mishiba, the author was informed that he obtained algebraic independence of the set
    \begin{equation}
        \{\alpha_0,\,\dots,\,\alpha_n\} \cup \{\alpha_{\mathbf{s}^\prime,\,0} \mid \mathbf{s}^\prime\in \operatorname{Sub}^\prime(\mathbf{s}) \}\cup \{\alpha_{(s_j),\,1} \mid 1\leq j \leq r\}
    \end{equation}
    under the same assumption as Theorem \ref{MainB}. Our main theorem is a generalization of his result.
\end{Remark}

\subsection{Outiline of the paper}
We outline the structure of this paper as follows.
Section \ref{sectionPreliminaries} is devoted to recalling Papanikolas' theory, which is the main tools that we use for the proof of Theorem \ref{MainB}. The notions of pre-$t$-motives with their Betti cohomology realizations, and rigid analytically triviality are needed when reviewing Papanikolas' theory, and we recall them in Subsection \ref{subsectionpret}. The notion of periods of rigid analytically trivial pre-$t$-motives is also introduced with giving one concrete example.
In Subsection \ref{subsectionPapanikolas}, we then state the main result of~\cite{Papanikolas2008} in terms of $t$-motivic Galois groups.

In Section \ref{sectionperiodinterpretation}, we pursue period interpretations of the Taylor coefficients considered in Theorem \ref{MainB} in order to apply the machineries in Section \ref{sectionPreliminaries} to our problem.
We recall Carlitz multiple polylogarithms, $t$-motivic Carlitz multiple polylogarithms introduced by Chang~\cite{Chang2014}, and Anderson-Thakur series in Subsection \ref{subsectionATseries}, from which we can obtain period interpretations of multiple zeta values. 
Subsection \ref{subsectionprolongation} is for the review on the theory of prolongations of pre-$t$-motives, which enable us to obtain period interpretations of Taylor coefficients of Anderson-Thakur series.

In Section \ref{sectionfirststep}, we study algebraic independence of MZV's and Taylor coefficients of $\Omega$. Up to multiplication by elements of $\mathbb{F}_q(\theta)$, MZV's coincide with the $0$-th Taylor coefficient of Anderson-Thakur series and the consideration in this section would be the first step of the study of higher Taylor coefficients of the series.

We perform a proof of Theorem \ref{MainB} in Section \ref{sectionproof}. We concretely construct pre-$t$-motives which have Taylor coefficients of Anderson-Thakur series as periods and determine their $t$-motivic Galois groups. The explicit calculations of the  $t$-motivic Galois groups in question prove Theorem \ref{MainB} by virtue of Papanikolas' theory.

\section{Pre-$t$-motives and transcendence of their periods}\label{sectionPreliminaries}

In this section, we review the theory of pre-$t$-motives (\S \ref{subsectionpret}) and Papanikolas' theory on $t$-motivic Galois groups \cite{Papanikolas2008} (\S \ref{subsectionPapanikolas}). 
These are powerful tools in transcendence theory in arithmetic on function fields of positive characteristic and we will use them in the proof of Theorem \ref{MainB}.

Let us fix the notation. We define $A$ to be the polynomial ring $\mathbb{F}_q[\theta]$ and $K$ to be the field $\mathbb{F}_q(\theta)$ of fractions of $A$. Let $K_\infty:=\mathbb{F}_q((1/\theta)))$ be the completion of $K$ with respect to the $\infty$-adic absolute value given by
\begin{equation}
    |a/b|_\infty:=q^{\deg a -\deg b} \quad (a,\,b\in A,\,b \neq0),
\end{equation}
and the completion of a fixed algebraic closure $\overline{K}_\infty$ of $K_\infty$ is denoted by $\mathbb{C}_\infty$. For convenience, we still denote by $|\cdot|_{\infty}$ the extended $\infty$-adic absolute value on $\mathbb{C}_\infty$. 
We let $t$ be a new variable and consider the field $\mathbb{C}_\infty((t))$ of Laurent series. 
The symbol $\mathbb{T}$ denotes the \textit{Tate algebra} over $\mathbb{C}_\infty$ defined as follows:
\begin{equation}
    \mathbb{T}:=\left\{ \sum_{i=m}^\infty a_i t^i \, \middle|\, m \in \mathbb{Z}, \,a_i \in \mathbb{C}_\infty, \,|a_i|_{\infty}\to 0 \text{ for $i \rightarrow \infty$}\right\}.
\end{equation}
The field of fractions of $\mathbb{T}$ is denoted by $\mathbb{L}$. We say a power series 
\begin{equation}
    f=\sum_{i=0}^\infty a_i t^i \in \mathbb{C}_\infty[[t]]
\end{equation}
is entire if we have
\begin{equation}
    \lim_{i \geq \infty}\sqrt[i]{|a_i|_\infty}=0 \text{ and } [K_\infty(a_1,\,a_2,\,\dots):K_\infty]<\infty
\end{equation}  following \cite{Anderson2004}.
An entire power series converges for any $t\in \mathbb{C}_\infty$ and we write $\mathbb{E}$ for the ring of entire power series. 

\subsection{Pre-$t$-motives}\label{subsectionpret}
In this subsection, we recall the notion of pre-$t$-motives, which were introduced in \cite{Papanikolas2008}.
Following \cite{Anderson2004}, for given $n \in \mathbb{Z}$ and a Laurent series 
\begin{equation}
    f=\sum_{i=m}^\infty a_i t^i \in \mathbb{C}_\infty((t)),
\end{equation}
we define its \textit{$n$-fold twist} to be 
\begin{equation}
    f^{(n)}:=\sum_{i=m}^\infty a_i^{q^n} t^i.
\end{equation}
For any matrix $B=(b_{ij})$ with entries in $\mathbb{C}_\infty((t))$, we define $B^{(n)}:=(b_{ij}^{(n)})$.
We further define $\overline{K}(t)[\sigma,\,\sigma^{-1}]$ to be the non-commutative ring of Laurent polynomials over $\overline{K}(t)$ in the variable $\sigma$ subject to the relations $\sigma f=f^{(-1)}\sigma$ for $f \in \overline{K}(t)$. Note that the center of the ring $\overline{K}(t)[\sigma,\,\sigma^{-1}]$ contains $\mathbb{F}_q(t)$.

\begin{definition}[{\cite{Papanikolas2008}}]
A left $\overline{K}(t)[\sigma,\,\sigma^{-1}]$-module is called a \textit{pre-$t$-motive} if it is a finite dimensional vector space over $\overline{K}(t)$. Morphisms of pre-$t$-motives are defined to be left $\overline{K}(t)[\sigma,\,\sigma^{-1}]$-module homomorphisms between pre-$t$-motives.
\end{definition}

By the relation $\sigma f=f^{(-1)} \sigma=f \sigma$, which holds for each $f \in \mathbb{F}_q(t)$, Papanikolas~\cite{Papanikolas2008} deduced that the category $\mathcal{P}$ of pre-$t$-motives has a structure of $\mathbb{F}_q(t)$-linear category. 
He further proved in \cite[Theorem 3.2.13]{Papanikolas2008} that $\mathcal{P}$ is a rigid abelian tensor category over $\mathbb{F}_q(t)$ where the tensor product operation is given as follows. For two pre-$t$-motives $P$ and $P^\prime$, we define $P\otimes P^\prime:=P\otimes_{\overline{K}(t)}P^\prime$, on which $\sigma$ acts diagonally.

\begin{example}
 The \textit{trivial pre-$t$-motive}, which is denoted by $\mathbf{1}$, is the one-dimensional $\overline{K}(t)$-vector space $\overline{K}(t)$ with the $\sigma$-action given by $\sigma f:=f^{(-1)}$ for $f\in \overline{K}(t)$. 
\end{example}
\begin{example}
   The \textit{Carlitz motive} denoted by $C$ is $\overline{K}(t)$ with $\sigma$-action given by $\sigma f:= (t-\theta)f^{(-1)}$ for $f\in \overline{K}(t)$. For $n \geq 1$, the $n$-the tensor power of $C$ is denoted by $C^{\otimes n}:=C\otimes \cdots \otimes C$ ($n$ times). So underlying $\overline{K}(t)$-vector space of $C^{\otimes n}$ is $\overline{K}(t)$ and the $\sigma$-action is given by $\sigma f:= (t-\theta)^n f^{(-1)}$ for $f\in \overline{K}(t)$.
\end{example}
Throughout this paper, for convenience when we say that a pre-$t$-motive $P$ is defined by the matrix $\Phi\in \operatorname{Mat}_{r}(\overline{K}(t))$, it is understood that $P$ is of dimension $r$ over $\overline{K}(t)$, and with respect to a fixed  $\overline{K}(t)$-basis $\{ m_1,\ldots,m_r\}$ we have
$ \sigma {\bf{m}}=\Phi {\bf{m}}, $
where ${\bf{m}}=(m_1,\ldots,m_r)^{tr}$. Note that since $P$ is a left $\overline{K}(t)[\sigma,\sigma^{-1}]$-module, the matrix $\Phi$ defining $P$ above must be invertible.

For a given pre-$t$-motive $P$, we put
\begin{equation}
    P^{\text{B}}:=\{a \in \mathbb{L}\otimes_{\overline{K}(t)}P \mid \sigma(a)=a \}. 
\end{equation}
where $\sigma $-action on $\mathbb{L}\otimes_{\overline{K}(t)}P$ is given by $\sigma(f\otimes m):=f^{(-1)}\otimes \sigma m$ for $f \in \mathbb{L}$ and $m \in P$, following \cite{Papanikolas2008}. 
Note that $P^{\text{B}}$ is an $\mathbb{F}_q(t)$-vector space, and we call $P^{\text{B}}$ the \textit{Betti realization} of $P$.
If the natural map 
\begin{equation}
\mathbb{L}\otimes_{\mathbb{F}_q(t)}P^{\text{B}} \rightarrow \mathbb{L}\otimes_{\overline{K}(t)}P
\end{equation}
is an isomorphism of $\mathbb{L}$-vector spaces, then we say that $P$ is \textit{rigid analytically trivial}. 
It was shown by Papanikolas that we have the following criterion for rigid analytically triviality of pre-$t$-motives, see also \cite{Anderson1986}.

\begin{Proposition}[{\cite[Theorem 3.3.9]{Papanikolas2008}}]
    Suppose that $P$ is a pre-$t$-motive of dimension $r$ over $\overline{K}(t)$ defined by $\Phi \in \operatorname{GL}_r(\overline{K}(t))$. Then $P$ is rigid analytically trivial if and only if there exists $\Psi \in { \rm GL}_r (\mathbb{L})$ such that $\Psi^{(-1)}=\Phi \Psi$.
\end{Proposition}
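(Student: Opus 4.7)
The plan is to pass between $\sigma$-invariant elements of $\mathbb{L} \otimes_{\overline{K}(t)} P$ and matrices $\Psi \in \operatorname{GL}_r(\mathbb{L})$ satisfying the Frobenius equation, with the bridge being an Artin-style linear independence lemma. More precisely, the first step is to establish that any family of elements of $P^{\mathrm{B}}$ that is $\mathbb{F}_q(t)$-linearly independent remains $\mathbb{L}$-linearly independent in $\mathbb{L} \otimes_{\overline{K}(t)} P$. I would prove this by the standard shortest-relation argument: from a minimal $\mathbb{L}$-linear dependence $\sum c_i v_i = 0$ with $v_i \in P^{\mathrm{B}}$ and (after rescaling) some $c_{i_0} = 1$, applying $\sigma$ and subtracting the two relations yields a strictly shorter relation $\sum (c_i - c_i^{(-1)}) v_i = 0$. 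Minimality then forces $c_i = c_i^{(-1)}$ for all $i$, and since the fixed field $\mathbb{L}^{\sigma}$ equals $\mathbb{F}_q(t)$, the $v_i$ would have been $\mathbb{F}_q(t)$-linearly dependent, a contradiction. Consequently, the natural map $\mathbb{L} \otimes_{\mathbb{F}_q(t)} P^{\mathrm{B}} \to \mathbb{L} \otimes_{\overline{K}(t)} P$ is always injective and $\dim_{\mathbb{F}_q(t)} P^{\mathrm{B}} \leq r$, with rigid analytic triviality being equivalent to equality.

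For the ($\Leftarrow$) direction, given $\Psi \in \operatorname{GL}_r(\mathbb{L})$ with $\Psi^{(-1)} = \Phi \Psi$, I would set $\mathbf{n} := \Psi^{-1}\mathbf{m}$ and verify
\begin{equation}
\sigma \mathbf{n} = (\Psi^{-1})^{(-1)} \sigma \mathbf{m} = (\Psi^{(-1)})^{-1} \Phi \mathbf{m} = \Psi^{-1} \Phi^{-1} \Phi \mathbf{m} = \mathbf{n},
\end{equation}
where I use that twisting commutes with inversion and that $(\Psi^{(-1)})^{-1} = (\Phi \Psi)^{-1} = \Psi^{-1}\Phi^{-1}$. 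Hence the entries of $\mathbf{n}$ produce $r$ elements of $P^{\mathrm{B}}$ which, since $\Psi^{-1}$ is invertible over $\mathbb{L}$, span $\mathbb{L} \otimes_{\overline{K}(t)} P$ over $\mathbb{L}$. Surjectivity of the natural map combined with injectivity from the lemma gives the desired isomorphism, so $P$ is rigid analytically trivial.

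For the ($\Rightarrow$) direction, rigid analytic triviality forces $\dim_{\mathbb{F}_q(t)} P^{\mathrm{B}} = r$. I would then pick an $\mathbb{F}_q(t)$-basis $n_1, \dots, n_r$ of $P^{\mathrm{B}}$, write $\mathbf{n} = \Lambda \mathbf{m}$ with $\Lambda \in \operatorname{Mat}_r(\mathbb{L})$, and observe that the triviality hypothesis forces $\Lambda \in \operatorname{GL}_r(\mathbb{L})$ (as $\Lambda$ represents a change of basis between two $\mathbb{L}$-bases of $\mathbb{L} \otimes_{\overline{K}(t)} P$). Translating each invariance $\sigma n_i = n_i$ into matrix form yields $\Lambda^{(-1)} \Phi = \Lambda$, so setting $\Psi := \Lambda^{-1}$ produces the required identity $\Psi^{(-1)} = \Phi \Psi$. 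The only non-bookkeeping obstacle is the Artin-style independence lemma, which relies crucially on the identification $\mathbb{L}^{\sigma} = \mathbb{F}_q(t)$; once this is in hand, both directions reduce to a direct translation between the "$\sigma$-invariant vectors" and "matrix functional equation" viewpoints.
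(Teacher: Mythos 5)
Your argument is correct. The paper itself gives no proof of this Proposition---it is quoted directly from Papanikolas (Theorem 3.3.9 of \cite{Papanikolas2008})---and your proof is essentially the standard one found there: the Artin-type shortest-relation lemma (resting on the fact that the $\sigma$-fixed subfield of $\mathbb{L}$ is $\mathbb{F}_q(t)$) gives injectivity of $\mathbb{L}\otimes_{\mathbb{F}_q(t)}P^{\mathrm{B}}\to\mathbb{L}\otimes_{\overline{K}(t)}P$, and the two directions are then the dictionary between $\sigma$-fixed vectors $\Psi^{-1}\mathbf{m}$ and the functional equation $\Psi^{(-1)}=\Phi\Psi$, exactly as you carry out.
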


For a rigid analytically trivial pre-$t$-motive $P$ defined by $\Phi$, the matrix $\Psi$ in the proposition above is called a \textit{rigid analytic trivialization} of $\Phi$. Note that $\Psi^{-1}\mathbf{m}$ forms a $\mathbb{F}_q(t)$-basis of $P^B$. We mention that rigid analytic trivialization of $\Phi$ is not unique. In fact, if $\Psi$ and $\Psi^\prime$ are two rigid analytic trivializations of a matrix $\Phi$, then $\Psi^{-1}\Psi^\prime \in { \rm GL} _r(\mathbb{F}_q(t))$ (\cite[\S 4.1]{Papanikolas2008}). Let us write $\Psi^{-1}=\Theta=(\Theta_{ij})$. If an entry $\Theta_{ij}$ converges at $t=\theta$, then the value $\Theta_{ij}|_{t=\theta}$ is called a \textit{period} of $P$ (cf.~\cite{Papanikolas2008}). Because of the following proposition, the entries of the matrices $\Psi$ we consider in the following context are entire.

\begin{Proposition}[{\cite[Proposition 3.1.3]{Anderson2004}}] 
 Given
        $ \Phi \in \operatorname{Mat}_{r \times r}(\overline{K}[t]),
    $
    suppose that there exists  
    $\psi \in \operatorname{Mat}_{r\times 1}(\mathbb{T})$
    so that \[ \psi^{(-1)}=\Phi \psi.\]
    If  $\det \Phi|_{t=\theta} \neq 0$, then all entries of $\psi$ are entire.
\end{Proposition}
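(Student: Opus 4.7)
The plan is to use the functional equation. Applying the $(+1)$-twist to $\psi^{(-1)}=\Phi\psi$ produces the equivalent relation $\psi=\Phi^{(1)}\psi^{(1)}$. Writing $\psi=\sum_{i\geq 0}a_i t^i$ with $a_i\in\operatorname{Mat}_{r\times 1}(\mathbb{C}_\infty)$ and $\Phi=\sum_{j=0}^{d}C_j t^j$ with $C_j\in\operatorname{Mat}_{r\times r}(\overline{K})$, and using the positive-characteristic identity $\bigl(\sum_j C_j a_{i-j}\bigr)^{(1)}=\sum_j C_j^{(1)} a_{i-j}^{(1)}$ (where $(\cdot)^{(1)}$ denotes entry-wise $q$-th power on constants), matching $t^i$-coefficients yields the key recursion
\[ a_i \;=\; \Bigl(\sum_{j=0}^{d}C_j\,a_{i-j}\Bigr)^{(1)} \qquad (i\geq 0,\ a_k:=0\text{ for }k<0). \]
Showing entirety of $\psi$ splits into two independent tasks: (i) establishing $\sqrt[i]{|a_i|_\infty}\to 0$, and (ii) producing a fixed finite extension of $K_\infty$ containing every $a_i$.

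For (i), set $M:=\max_{j,k,l}|(C_j)_{kl}|_\infty$; the ultrametric inequality applied to the recursion gives
\[ |a_i|_\infty \;\leq\; M^q\,\max_{0\leq j\leq d}|a_{i-j}|_\infty^q. \]
Since $\psi\in\operatorname{Mat}_{r\times 1}(\mathbb{T})$ only guarantees the soft decay $|a_i|_\infty\to 0$, I would first pick $I$ so large that $|a_i|_\infty<M^{-q/(q-1)}$ for every $i\geq I-d$; the maximum above cannot then be attained at $j=0$ (else $|a_i|_\infty\geq M^{-q/(q-1)}$, a contradiction), so the inequality tightens to $|a_i|_\infty\leq M^q\max_{1\leq j\leq d}|a_{i-j}|_\infty^q$. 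Iterating this contraction in blocks of length $d$, a straightforward induction on $n$ produces
\[ |a_i|_\infty \;\leq\; M^{q(q^n-1)/(q-1)}\,\epsilon_0^{q^n}\qquad(i\geq I+nd), \]
where $\epsilon_0:=\max_{I-d\leq k<I}|a_k|_\infty$ can be arranged $<M^{-q/(q-1)}$. Taking logarithms, the dominant term $q^n\log\epsilon_0$ forces $(\log|a_i|_\infty)/i\to-\infty$, giving infinite radius of convergence.

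Step (ii) I would handle by reorganising the recursion as $a_i=C_0^{(1)}\,a_i^{(1)}+S_i^{(1)}$ with $S_i:=\sum_{j=1}^{d}C_j a_{i-j}$. Each of the finitely many initial coefficients $a_0,\dots,a_{I-1}$ satisfies (jointly) a polynomial system over the finite extension of $K$ generated by the entries of the $C_j$'s, hence is algebraic over $K$; let $L$ be the finite extension of $K$ they all generate and set $\mathcal{L}:=L\cdot K_\infty$, a complete finite extension of $K_\infty$. For $i\geq I$ the Picard-type iteration $x_{n+1}:=S_i^{(1)}+C_0^{(1)} x_n^{(1)}$ starting from $x_0:=S_i^{(1)}$ stays in $\mathcal{L}^r$; the estimate $|x_{n+1}-x_n|_\infty\leq M^q|x_n-x_{n-1}|_\infty^q$, combined with the smallness of $|S_i|_\infty$ supplied by (i), makes it contractive and forces convergence in $\mathcal{L}^r$ to the unique solution of $x=C_0^{(1)}x^{(1)}+S_i^{(1)}$ with $|x|_\infty<M^{-q/(q-1)}$; an ultrametric uniqueness argument then identifies this limit with $a_i$. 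Hence every $a_i$ lies in $\mathcal{L}$, and both conditions defining entirety hold. The main obstacle is step (i): the raw recursion only supplies the flat bound $|a_i|_\infty\leq M^q\max_j|a_{i-j}|_\infty^q$, and the heart of the argument is bootstrapping the qualitative decay $|a_i|_\infty\to 0$ into genuine doubly-exponential decay, which relies squarely on the characteristic-$p$ amplification provided by the $q$-th power.
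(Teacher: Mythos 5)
The paper offers no proof of this proposition --- it is an external citation of \cite[Prop.~3.1.3]{Anderson2004} --- so your attempt can only be measured against the argument there, which it essentially reconstructs: twist to $\psi=\Phi^{(1)}\psi^{(1)}$, extract the coefficient relation, amplify the soft decay $|a_i|_\infty\to 0$ through the $q$-th power into doubly exponential decay, and capture each later coefficient as the unique small solution of a Frobenius-twisted equation inside a fixed complete finite extension of $K_\infty$. Step (i) is correct as written (the dichotomy ``either $|a_i|_\infty=0$ or the maximum is attained at some $j\geq 1$'' is exactly the right way to remove the $j=0$ term), and the contraction and ultrametric uniqueness in step (ii) correctly place $a_i$ in $\mathcal{L}$ for all $i\geq I$.

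The one genuine gap is the assertion that $a_0,\dots,a_{I-1}$ ``satisfy a polynomial system over a finite extension of $K$, hence are algebraic over $K$.'' A $\mathbb{C}_\infty$-point of a variety defined over $\overline{K}$ need not be algebraic over $K$: if the solution scheme of the system had positive dimension, its generic points would be transcendental. What rescues the claim is particular to these equations: for $G_i:=a_i-\sum_j C_j^{(1)}a_{i-j}^{(1)}$ every Frobenius-twisted term has vanishing partial derivatives in characteristic $p$, so the Jacobian of the system in the unknowns $(a_0,\dots,a_{I-1})$ is the identity everywhere; the solution scheme is therefore \'etale over $\overline{K}$, hence finite, and all its points have coordinates in $\overline{K}$. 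You must make this (or an equivalent finiteness argument) explicit, since it is exactly what the finite-extension half of ``entire'' rests on. Two smaller points: the field $L$ should also contain the entries of the $C_j$ so that the iterates stay in $\mathcal{L}^r$; and you never invoke the hypothesis $\det\Phi|_{t=\theta}\neq 0$ --- harmless in itself (the condition as printed is a misquotation of \cite{Anderson2004}, and is violated by every matrix to which the paper applies the proposition), but note that you do tacitly assume $\psi$ is a genuine power series, i.e.\ has no polar part at $t=0$, which the paper's nonstandard definition of $\mathbb{T}$ would otherwise allow.
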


\begin{example}\label{Omega}
    The pre-$t$-motive $\mathbf{1}$ is defined by the matrix $(1) \in  \operatorname{GL}_1(\overline{K}(t))$, which has rigid analytic trivialization $(1) \in  \operatorname{GL}_1(\mathbb{L})$. In order to study a period of the Carlitz motive $C$, we consider the following infinite product
    \begin{equation}
        \Omega(t):= (-\theta)^{\frac{-q}{q-1}}\prod_{i \geq 1} \left( 1-\frac{t}{\theta^{(i)}} \right)\in \mathbb{C}_\infty((t)),
    \end{equation}
    where  $(-\theta)^{\frac{1}{q-1}}$ is a fixed $(q-1)$th root of $-\theta$ following \cite{Anderson2004}. From the definition of $\Omega$, one can show that $\Omega^{(-1)}=(t-\theta)\Omega(t)$ and so $(\Omega) \in  \operatorname{GL}_1(\mathbb{L})$ is a rigid analytic trivialization of the matrix $(t-\theta)\in  \operatorname{GL}_1(\overline{K}(t))$, which defines the Carlitz motive $C$.   Since $\Omega$ is entire on $\mathbb{C}_{\infty}$ with simple zero at $t=\theta^{q^{i}}$ for each $i\in \mathbb{N}$, we see that $\Omega^{-1}$ converges at $t=\theta$. The value  
    \begin{equation}
        \widetilde{\pi}:=\Omega^{-1}|_{t=\theta}=\theta(-\theta)^{\frac{1}{q-1}}\prod_{i \geq 1}\left( 1-\frac{\theta}{\theta^{q^i}} \right)^{-1}
    \end{equation}
is a period of $C$ and known as  a fundamental period of the Carlitz module (\cite{Carlitz1935}). This value is viewed as a positive characteristic analogue of the complex period $2\pi \sqrt{-1}$ and is proven to be transcendental over $K$, such as the classical case, by Wade~(\cite{Wade1941}). 
\end{example}

Papanikolas proved that rigid analytically trivial pre-$t$-motives form a neutral Tannakian category over $\mathbb{F}_q(t)$ as the following theorem claims (for the definition of Tannakian category, we refer the readers to \cite{Deligne1982}). 
We will study periods of rigid analytically trivial pre-$t$-motives via Tannakian duality.

\begin{Theorem}[{\cite[Theorem 3.3.15]{Papanikolas2008}}]
The category $\mathcal{R}$ consisting of all rigid analytically trivial pre-$t$-motive forms a neutral Tannakian category over $\mathbb{F}_q(t)$ with the fiber functor $P\mapsto P^{\text{B}}$.
\end{Theorem}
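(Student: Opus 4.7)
The plan is to verify the axioms of a neutral Tannakian category directly, with fiber functor $\omega := (\,\cdot\,)^{\mathrm{B}}$. Concretely three bundles of facts need to be established: (a) $\mathcal{R}$ is an $\mathbb{F}_q(t)$-linear, abelian, rigid tensor subcategory of $\mathcal{P}$; (b) $\omega$ takes values in finite-dimensional $\mathbb{F}_q(t)$-vector spaces and is exact, faithful, and $\mathbb{F}_q(t)$-linear; (c) $\omega$ is a tensor functor with $\omega(\mathbf{1}) \cong \mathbb{F}_q(t)$. Since $\mathcal{P}$ is already known to be an $\mathbb{F}_q(t)$-linear rigid abelian tensor category, the content of (a) is that rigid analytic triviality is preserved under the categorical operations.

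For (a), the matrix criterion (Proposition 3.3.9 above) is the main workhorse. If $P,\,P'$ are rigid analytically trivialized by $\Psi,\,\Psi'$ for $\Phi,\,\Phi'$, then $\Psi\otimes\Psi'$ trivializes $\Phi\otimes\Phi'$, and an analogous transpose-inverse recipe produces a trivialization of the dual. For a sub-object $Q\subseteq P$, I would pick a $\overline{K}(t)$-basis of $P$ adapted to $Q$ so that $\Phi$ becomes block upper-triangular with diagonal blocks $\Phi_Q$ and $\Phi_{P/Q}$; writing $\Psi$ in matching block form and expanding the equation $\Psi^{(-1)}=\Phi\Psi$ then extracts rigid analytic trivializations of $\Phi_Q$ and $\Phi_{P/Q}$, using $\mathbb{L}$-flatness to solve the off-diagonal block.

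For (b), the hypothesis of rigid analytic triviality furnishes an $\mathbb{L}$-linear isomorphism $\mathbb{L}\otimes_{\mathbb{F}_q(t)}P^{\mathrm{B}} \xrightarrow{\sim} \mathbb{L}\otimes_{\overline{K}(t)}P$, which immediately yields $\dim_{\mathbb{F}_q(t)}P^{\mathrm{B}} = \dim_{\overline{K}(t)}P$, so $\omega$ lands in finite-dimensional $\mathbb{F}_q(t)$-vector spaces. Left exactness of $\omega$ is automatic because taking $\sigma$-fixed vectors is left exact, and combined with the dimension equality this is promoted to exactness by a rank count on any short exact sequence. Faithfulness then follows since $\omega(P)=0$ forces $\mathbb{L}\otimes_{\overline{K}(t)}P = 0$ and hence $P=0$.

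The main obstacle, corresponding to step (c), is the identification $\omega(\mathbf{1}) = \mathbb{L}^\sigma \cong \mathbb{F}_q(t)$: one must show that every element of $\mathbb{L}$ fixed by $f\mapsto f^{(-1)}$ lies in $\mathbb{F}_q(t)$. I would first analyze $\mathbb{T}^\sigma$: if $f=\sum a_i t^i\in\mathbb{T}$ satisfies $f^{(-1)}=f$, then applying the $(1)$-twist yields $f=\sum a_i^q t^i$, forcing $a_i^q=a_i$ and hence $a_i\in\mathbb{F}_q$; the decay condition $|a_i|_\infty\to 0$ on coefficients in the discrete set $\mathbb{F}_q$ then leaves only finitely many nonzero terms, so $\mathbb{T}^\sigma\subseteq\mathbb{F}_q[t]$. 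Passing to fractions gives $\mathbb{L}^\sigma=\mathbb{F}_q(t)$. Tensor compatibility of $\omega$ then follows by combining the explicit formula $\Psi_{P\otimes P'}=\Psi_P\otimes\Psi_{P'}$ with the dimension count from (b): this supplies a natural $\mathbb{F}_q(t)$-linear map $\omega(P)\otimes_{\mathbb{F}_q(t)}\omega(P')\to\omega(P\otimes P')$ which is an isomorphism because both sides have the same $\mathbb{F}_q(t)$-dimension. Assembling these ingredients and invoking Deligne's characterization of neutral Tannakian categories completes the proof.
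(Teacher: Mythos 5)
This statement is imported verbatim from \cite[Theorem 3.3.15]{Papanikolas2008}; the paper you are reading gives no proof of it, only the citation, so the comparison is really with Papanikolas' own argument in \S 3.3 of that reference. Your outline reproduces that argument in its essential structure: closure of $\mathcal{R}$ under the tensor operations via the matrix criterion $\Psi^{(-1)}=\Phi\Psi$, the dimension identity $\dim_{\mathbb{F}_q(t)}P^{\mathrm{B}}=\dim_{\overline{K}(t)}P$ to get exactness and faithfulness of the fiber functor, and the computation of $\sigma$-invariants to identify $\mathrm{End}(\mathbf{1})$ and $\omega(\mathbf{1})$ with $\mathbb{F}_q(t)$.

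Two steps are under-justified as written. First, ``passing to fractions'' does not by itself give $\mathbb{L}^{\sigma}=\mathbb{F}_q(t)$: for $f=g/h\in\mathbb{L}$ with $f^{(-1)}=f$ you only get $g^{(-1)}h=gh^{(-1)}$, and to descend to $\mathbb{T}^{\sigma}$ you must write $f$ in lowest terms and use that $\mathbb{T}$ is a principal ideal domain (Weierstrass-type factorization in the Tate algebra) to conclude $h^{(-1)}$ is a unit multiple of $h$; this is exactly the content of Papanikolas' Lemma 3.3.2 and cannot be skipped. Second, your block-triangular extraction handles sub-objects but not quotients: with $\sigma\mathbf{m}=\Phi\mathbf{m}$ and $Q$ spanned by the first $k$ basis vectors, $\Phi$ is block lower-triangular and the top block-row of $\Psi$ satisfies the homogeneous equation for $\Phi_Q$, so selecting $k$ $\mathbb{L}$-independent columns trivializes $Q$; but the bottom block-row satisfies an inhomogeneous equation involving $\Phi_{21}$, so no trivialization of $\Phi_{P/Q}$ drops out, and ``$\mathbb{L}$-flatness'' does not repair this. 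The standard fix, available from ingredients you already have, is to dualize: $(P/Q)^{\vee}$ is a sub-object of $P^{\vee}$, duals preserve rigid analytic triviality via $(\Psi^{-1})^{tr}$, and double duality finishes. Alternatively one can invoke the general inequality $\dim_{\mathbb{F}_q(t)}N^{\mathrm{B}}\leq\dim_{\overline{K}(t)}N$ together with left-exactness of $(\cdot)^{\mathrm{B}}$. With these two repairs your proof is complete and coincides with the cited one.
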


For a rigid analytically trivial pre-$t$-motive $P$ we denote by $\langle P \rangle $ the Tannakian sub-category of $\mathcal{R}$ generated by $P$ in this paper. By Tannakian duality, there exists an algebraic group $\Gamma_P$ such that $\langle P \rangle $ is equivalent to the category $\operatorname{Rep}_{\mathbb{F}_q(t)}(\Gamma_P)$ of finite dimentional linear representations of $\Gamma_P$ over $\mathbb{F}_{q}(t)$. The algebraic group $\Gamma_P$ is called the \textit{$t$-motivic Galois group} of $P$.

\subsection{Papanikolas' theory on $t$-motivic Galois groups}\label{subsectionPapanikolas}
This subsection is devoted to recalling Papanikolas' theory on relationships between transcendence property of periods and $t$-motivic Galois groups.

Suppose that we have $\Phi\in \operatorname{GL}_r(\overline{K}(t))$ and $\Psi\in \operatorname{GL}_r(\mathbb{L})$ for which $\Psi^{(-1)}=\Phi \Psi$.
 We put $\Psi_1:=(\Psi_{ij}\otimes1) \in \operatorname{GL}_r(\mathbb{L}\otimes_{\overline{K}(t)}\mathbb{L})$, $\Psi_2:=(1\otimes\Psi_{ij})\in \operatorname{GL}_r(\mathbb{L}\otimes_{\overline{K}(t)}\mathbb{L})$, and \begin{equation}\widetilde{\Psi}:=\Psi_1^{-1}\Psi_2. \label{eqtilde}
 \end{equation}
 Let us consider the subscheme
\begin{equation}
    \Gamma_{\Psi}:=\operatorname{Spec}\mathbb{F}_q(t)[\widetilde{\Psi}_{ij},\,1/\det\widetilde{\Psi}] \label{GammaPsi}
\end{equation}
 of $\operatorname{GL}_{r/\mathbb{F}_q(t)}$, which is characterized to be the smallest closed subscheme which has $\widetilde{\Psi}$ as its $\mathbb{L}\otimes_{\overline{K}(t)} \mathbb{L}$-valued point. 
By the following theorem of Papanikolas, the variety $\Gamma_{\Psi}$ is isomorphic to the $t$-motivic Galois group of a pre-$t$-motive $P$ if $\Psi$ is a  rigid analytic trivialization of $\Phi$ defining $P$ and has connection with transcendence theory.
We mention that the proof of equation \eqref{Papanikolasthmtrdeg} highly depends on \textit{ABP-criterion} established in \cite{Anderson2004}.

\begin{Theorem}[{\cite[Proposition 3.1.3]{Anderson2004}, \cite[Theorem 5.2.2]{Papanikolas2008}}]
\label{ThmPapanikolas}
Let us take $\Phi \in \operatorname{Mat}_{ r}(\overline{K}[t])\cap\operatorname{GL}_r(\overline{K}(t))$ which has a rigid analytic trivialization $\Psi$ in $\operatorname{Mat}_{ r}(\mathbb{E})\cap\operatorname{GL}_r(\mathbb{L})$, and let $P$ be the pre-$t$-motive defined by $\Phi$. 
Then the subscheme $\Gamma_\Psi$ of $\operatorname{GL}_{r/\mathbb{F}_q(t)}$ defined in equation \eqref{GammaPsi} is a smooth algebraic subgroup over $\mathbb{F}_q(t)$ and we have an isomorphism
\begin{equation}
    \Gamma_{P} \simeq \Gamma_\Psi \label{EqThemPapanikolas}
\end{equation}
of algebraic groups over $\mathbb{F}_q(t)$. Moreover, if $\det \Phi = c(t-\theta)^m$ for some $c \in \overline{K}^\times$ and $m \geq 0$, we further have 
\begin{equation}
    \operatorname{tr.deg}_{\overline{K}}\overline{K}\left( \Psi_{ij} |_{t=\theta}\right)= \dim \Gamma_P, \label{Papanikolasthmtrdeg}
\end{equation}
 where $\overline{K}\left( \Psi_{ij} |_{t=\theta}\right)$ is the field generated by all entries of $\Psi$ evaluated at $t=\theta$ over $\overline{K}$ (refer also to \cite[Theorem 3.2]{Mishiba2017}). 
\end{Theorem}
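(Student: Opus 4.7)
The plan is to establish Papanikolas' theorem in three stages --- (i) $\Gamma_\Psi$ is a smooth closed subgroup of $\operatorname{GL}_{r/\mathbb{F}_q(t)}$; (ii) Tannakian duality identifies $\Gamma_P$ with $\Gamma_\Psi$; and (iii) under the determinantal hypothesis on $\Phi$, the ABP-criterion bridges $\overline{K}(t)$-algebraic independence of the entries $\Psi_{ij}$ with $\overline{K}$-algebraic independence of their specializations at $t=\theta$ --- the third being where the main difficulty lies.

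For (i), I would first observe a cocycle identity for $\widetilde{\Psi}$. Introduce a third copy of $\mathbb{L}$ and denote by $\Psi_i$ the image of $\Psi$ in the $i$-th slot of $\mathbb{L}\otimes_{\overline{K}(t)}\mathbb{L}\otimes_{\overline{K}(t)}\mathbb{L}$ for $i=1,2,3$; set $\widetilde{\Psi}_{ij}:=\Psi_i^{-1}\Psi_j$. A direct computation yields
\[
\widetilde{\Psi}_{13} \;=\; \Psi_1^{-1}\Psi_3 \;=\; (\Psi_1^{-1}\Psi_2)(\Psi_2^{-1}\Psi_3) \;=\; \widetilde{\Psi}_{12}\widetilde{\Psi}_{23},
\]
together with $\widetilde{\Psi}_{ii}=I$ and $\widetilde{\Psi}_{ji}=\widetilde{\Psi}_{ij}^{-1}$. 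This forces the defining ideal of $\Gamma_\Psi\subset\operatorname{GL}_r$ to be a Hopf ideal, so $\Gamma_\Psi$ is a closed algebraic subgroup. Smoothness follows because the cocycle identity provides enough translation-equivariance on the $\overline{\mathbb{F}_q(t)}$-points of $\Gamma_\Psi$ to propagate smoothness at one point to smoothness everywhere.

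For (ii), the Betti fiber functor $\omega:Q\mapsto Q^{\text{B}}$ restricted to $\langle P\rangle$ represents $\Gamma_P$ as its tensor-compatible automorphism group scheme. The rigid analytic trivialization $\Psi$ realizes the canonical isomorphism $\mathbb{L}\otimes_{\mathbb{F}_q(t)}P^{\text{B}}\xrightarrow{\sim}\mathbb{L}\otimes_{\overline{K}(t)}P$; transporting the $\Gamma_P$-action across this isomorphism, an automorphism of $\omega$ at $P$ corresponds exactly to a ratio of two rigid analytic trivializations of $\Phi$, which is precisely an $\mathbb{L}\otimes_{\overline{K}(t)}\mathbb{L}$-point of the scheme $\Gamma_\Psi$ defined by \eqref{GammaPsi}. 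Extending this identification to tensor products and sub-quotients generating $\langle P\rangle$ uses the same cocycle content as in (i) and yields $\Gamma_P\simeq\Gamma_\Psi$ as algebraic groups over $\mathbb{F}_q(t)$.

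For (iii), the hypothesis $\det\Phi=c(t-\theta)^m$ ensures through the preceding proposition that $\Psi\in\operatorname{Mat}_r(\mathbb{E})$, so the specializations $\Psi_{ij}|_{t=\theta}$ make sense. The tautological fact that $\Psi$ furnishes the generic point of a $\Gamma_\Psi$-torsor gives $\operatorname{tr.deg}_{\overline{K}(t)}\overline{K}(t)(\Psi_{ij})=\dim\Gamma_\Psi$. The hard part is the descent to specialization: the ABP-criterion of \cite{Anderson2004}, whose hypotheses are tailored to a determinantal condition of exactly the form imposed here, asserts that every $\overline{K}$-algebraic relation among $\{\Psi_{ij}|_{t=\theta}\}$ lifts to a $\overline{K}[t]$-algebraic relation among $\{\Psi_{ij}\}$, forcing the two transcendence degrees to coincide. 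Combined with (ii), this yields $\operatorname{tr.deg}_{\overline{K}}\overline{K}(\Psi_{ij}|_{t=\theta})=\dim\Gamma_P$. The chief obstacle is precisely the ABP-criterion itself, whose proof is a delicate approximation argument in the Tate algebra requiring careful control of the entire growth of $\Psi$ and the vanishing order of $\det\Phi$ at $t=\theta$; step (i) also requires nontrivial care in positive characteristic, as smoothness of a closed subgroup scheme is not automatic and here must be extracted from the homogeneity encoded in the cocycle identity.
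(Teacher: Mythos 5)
This theorem is not proved in the paper at all: it is quoted verbatim from the literature, with the two halves attributed to \cite[Proposition 3.1.3]{Anderson2004} and \cite[Theorem 5.2.2]{Papanikolas2008}, so there is no in-paper argument to compare yours against. What you have written is a faithful reconstruction of the architecture of Papanikolas' original proof: the cocycle identity $\widetilde{\Psi}_{13}=\widetilde{\Psi}_{12}\widetilde{\Psi}_{23}$ forcing the defining ideal of $\Gamma_\Psi$ to be a Hopf ideal, the Tannakian identification of $\Gamma_P$ with $\Gamma_\Psi$ via ratios of rigid analytic trivializations, the torsor statement giving $\operatorname{tr.deg}_{\overline{K}(t)}\overline{K}(t)(\Psi_{ij})=\dim\Gamma_\Psi$, and the ABP-criterion to descend from functional to numerical transcendence degree. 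That is exactly the route the cited sources take, so as an outline it is the ``same approach'' as the references the paper defers to.

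Two places where your sketch is glibber than the actual argument deserve flagging. First, for smoothness of $\Gamma_\Psi$: homogeneity under translation only upgrades \emph{geometric} reducedness to smoothness, and over the imperfect field $\mathbb{F}_q(t)$ reducedness of $\Gamma_\Psi$ does not automatically imply geometric reducedness; Papanikolas' proof has to establish reducedness after base change to $\overline{\mathbb{F}_q(t)}$ by a separate argument using the structure of $\mathbb{F}_q(t)[\widetilde{\Psi}_{ij},1/\det\widetilde{\Psi}]$ inside $\mathbb{L}\otimes_{\overline{K}(t)}\mathbb{L}$, so ``propagate smoothness from one point'' elides the genuinely delicate step. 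Second, the ABP criterion as proved in \cite{Anderson2004} is a \emph{linear} independence criterion (every $\overline{K}$-linear relation among the values lifts to a $\overline{K}[t]$-linear relation among the functions); to obtain the statement about algebraic relations that you invoke, one must apply it to the entries of Kronecker/symmetric powers of $\Phi$ and $\Psi$, i.e., to monomials in the $\Psi_{ij}$, which is the content of \cite[\S 5.2]{Papanikolas2008}. Neither point is an error in your plan, but both are the real work hiding behind single sentences of your proposal.
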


Based on the theorem above, throughout this paper we always identify $\Gamma_P$ with $\Gamma_\Psi$ if $\Psi$ is a rigid analytic trivialization of a matrix representing a pre-$t$-motive $P$.
By abuse of language, we simply write $\Gamma_{\Psi}$ and $\Gamma_{P}$ for base changes $\Gamma_{\Psi}\times_{\operatorname{Spec}\mathbb{F}_q(t)} \operatorname{Spec}\overline{\mathbb{F}_q(t)}$ and $\Gamma_{P}\times_{\operatorname{Spec}\mathbb{F}_q(t)} \operatorname{Spec}\overline{\mathbb{F}_q(t)}$, respectively.
We sometimes use symbols $\Gamma_{\Psi}$ and $\Gamma_{P}$ for the group $\Gamma_{\Psi}(\overline{\mathbb{F}_q(t)})\simeq\Gamma_{P}(\overline{\mathbb{F}_q(t)})$ of $\overline{\mathbb{F}_q(t)}$-valued points.

Let $P$ and $P^\prime$ be rigid analytically trivial pre-$t$-motives defined respectively by the matrices $\Phi \in \operatorname{GL}_r(K(t))$ and $\Phi^\prime \in \operatorname{GL}_{r^\prime}(K(t))$ with $\Psi\in \operatorname{GL}_r(\mathbb{L})$ and $\Psi^\prime\in \operatorname{GL}_{r^\prime}(\mathbb{L})$ as their rigid analytic trivializations.
We note that the direct sum $P\oplus P^\prime$ is defined by the matrices $\Phi\oplus\Phi^\prime$
and that this matrix has a rigid analytic trivialization   $\Psi\oplus\Psi^\prime$. 
 Throughout this paper, for any square matrices $B_1$ and $B_2$ the symbol $B_1\oplus B_2$ denotes the canonical block diagonal matrix
 \begin{equation}
    \begin{pmatrix}
        B_1& O\\
        O & B_2
    \end{pmatrix}.
\end{equation}
By the definition \eqref{GammaPsi}, the algebraic group $\Gamma_{\Psi\oplus\Psi^\prime}$ is a closed subgroup of 
\begin{equation}
    \Gamma_{\Psi}\times\Gamma_{\Psi^\prime}=\left \{ B_1\oplus B_2 \ \middle| \ B_1\in \Gamma_{\Psi},\,B_2 \in   \Gamma_{\Psi^\prime} \right\}.
\end{equation}
On the other hand, Tannakian duality yields homomorphisms $\pi : \Gamma_{P\oplus P^\prime}\twoheadrightarrow \Gamma_{P}$ and $\pi^\prime : \Gamma_{P\oplus P^\prime}\twoheadrightarrow \Gamma_{P^\prime}$ of algebraic groups as the Tannakian categories $\langle P \rangle $ and $\langle P^\prime \rangle $ can be seen as subcategories of $\langle P \oplus P^\prime \rangle $. These homomorphisms are faithfully flat (\cite[Proposition 2.21]{Deligne1982}) and hence these induce surjective homomorphisms of groups of $\overline{\mathbb{F}_q(t)}$-valued points. We can describe these homomorphisms in terms of identifications $\Gamma_P=\Gamma_\Psi ,\,\Gamma_{P^\prime}=\Gamma_{\Psi^\prime} $, and $\Gamma_{P \oplus P^\prime}=\Gamma_{\Psi \oplus \Psi^\prime} $ as follows:

\begin{Lemma}\label{LemmaTannakianProj}
In the notations as above, the following diagram commutes:
\begin{center}
\begin{tikzpicture}[auto]
 \node (33) at (3, 3) {$\Gamma_{P\oplus P^\prime}$};
\node (00) at (0, 0) {$\Gamma_{P}$}; \node (30) at (3, 0) {$\Gamma_{P}\times\Gamma_{ P^\prime}$}; \node (60) at (6, 0) {$\Gamma_{P^\prime}$.};

\draw[->>] (33) to node {$\pi$}(00);
\draw[->>] (33) to node {$\pi^\prime$}(60);
\draw[->>] (30) to node {$\operatorname{pr}_1$}(00);
\draw[->>] (30) to node {$\operatorname{pr}_2$}(60);

\draw[{Hooks[right]}->] (33) to node {} (30);

\label{LemmaDiagram}
\end{tikzpicture}

\end{center}
(See \cite[Example 2.3]{Mishiba2015} for example.)
\end{Lemma}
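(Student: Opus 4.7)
The strategy is to verify the asserted diagram by unwinding the matrix descriptions of the $t$-motivic Galois groups and the definition of the Tannakian projection maps, then checking that all three triangles commute essentially block-by-block.

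First I would establish the diagonal (vertical) arrow $\Gamma_{P\oplus P'}\hookrightarrow \Gamma_{P}\times\Gamma_{P'}$. The direct sum $P\oplus P'$ is defined by the block-diagonal matrix $\Phi\oplus\Phi'$, and a direct calculation shows that $\Psi\oplus\Psi'$ is a rigid analytic trivialization of it, since $(\Psi\oplus\Psi')^{(-1)}=\Psi^{(-1)}\oplus(\Psi')^{(-1)}=(\Phi\Psi)\oplus(\Phi'\Psi')=(\Phi\oplus\Phi')(\Psi\oplus\Psi')$. From the definition \eqref{eqtilde} one then has $\widetilde{\Psi\oplus\Psi'}=\widetilde{\Psi}\oplus\widetilde{\Psi'}$ in $\operatorname{GL}_{r+r'}(\mathbb{L}\otimes_{\overline{K}(t)}\mathbb{L})$. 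By the characterization \eqref{GammaPsi} of $\Gamma_{\Psi\oplus\Psi'}$ as the smallest closed subscheme of $\operatorname{GL}_{r+r'/\mathbb{F}_q(t)}$ carrying this point, and since the set of block-diagonal matrices whose first block lies in $\Gamma_{\Psi}$ and whose second lies in $\Gamma_{\Psi'}$ is already closed and contains $\widetilde{\Psi}\oplus\widetilde{\Psi'}$, we conclude $\Gamma_{\Psi\oplus\Psi'}\subseteq \Gamma_{\Psi}\times\Gamma_{\Psi'}$.

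Next I would make explicit how the isomorphism $\Gamma_P\simeq\Gamma_{\Psi}$ of Theorem \ref{ThmPapanikolas} acts on the Betti realization. With respect to the $\mathbb{F}_q(t)$-basis $\Psi^{-1}\mathbf{m}$ of $P^{\text{B}}$ induced by a fixed basis $\mathbf{m}$ of $P$, any element $\gamma\in \Gamma_{\Psi}(R)$ (for an $\mathbb{F}_q(t)$-algebra $R$) acts on $R\otimes_{\mathbb{F}_q(t)} P^{\text{B}}$ as the matrix $\gamma$ itself; likewise for $\Psi'$ and $P'$. Using the concatenated basis $(\Psi\oplus\Psi')^{-1}\binom{\mathbf{m}}{\mathbf{m}'}=\binom{\Psi^{-1}\mathbf{m}}{(\Psi')^{-1}\mathbf{m}'}$ of $(P\oplus P')^{\text{B}}=P^{\text{B}}\oplus (P')^{\text{B}}$, an element $B_1\oplus B_2\in\Gamma_{\Psi\oplus\Psi'}$ acts as $B_1$ on the first summand and $B_2$ on the second.

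Finally, the projections $\pi$ and $\pi'$ are by construction the restriction homomorphisms associated with the inclusions $\langle P\rangle\hookrightarrow\langle P\oplus P'\rangle$ and $\langle P'\rangle\hookrightarrow\langle P\oplus P'\rangle$: they send a natural automorphism of the fiber functor on $\langle P\oplus P'\rangle$ to its restriction on the subcategory. Since $P$ and $P'$ occur in $\langle P\oplus P'\rangle$ through the canonical inclusion/projection morphisms realizing them as direct summands, naturality of the fiber-functor automorphism with respect to these morphisms forces the restriction to $P^{\text{B}}$, respectively $(P')^{\text{B}}$, to be the first-block, respectively second-block, component of the action on $(P\oplus P')^{\text{B}}$. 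Under the identifications of the previous paragraph, this translates into $B_1\oplus B_2\mapsto B_1$ and $B_1\oplus B_2\mapsto B_2$, which are exactly $\operatorname{pr}_1$ and $\operatorname{pr}_2$. Therefore both triangles commute. The only subtle point I foresee is keeping track of the bookkeeping between Papanikolas' identification $\Gamma_P\simeq\Gamma_{\Psi}$ and the block structure of $(P\oplus P')^{\text{B}}$; once this is laid out in coordinates, commutativity becomes a matter of reading off matrix blocks.
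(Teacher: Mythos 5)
Your proposal is correct and follows essentially the same route as the paper: both arguments rest on Papanikolas' explicit description (his Theorem 4.5.3) of the $\Gamma_\Psi$-action on the Betti realization via the basis $\Psi^{-1}\mathbf{m}$, together with the observation that $\pi$ is the restriction of a fiber-functor automorphism to the direct summand $P^{\mathrm{B}}\subset (P\oplus P')^{\mathrm{B}}$, which in block coordinates is exactly $\operatorname{pr}_1$. The only caveat is a conventions issue: with the paper's normalization the matrix $\gamma$ acts on the basis $\Psi^{-1}\mathbf{m}$ by $\gamma^{-1}$ (i.e.\ $\Psi^{-1}\mathbf{m}\mapsto(\Psi\gamma)^{-1}\mathbf{m}$) rather than "as the matrix $\gamma$ itself," but this does not affect the block-by-block comparison or the commutativity of the triangles.
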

This lemma is well-known by experts but we give a short proof in order to make the present paper self-contained.
We recall that for each $\mathbb{F}_q(t)$-algebra $R$, the $\Gamma_{P\oplus P^\prime}(R)$-action on the Betti realization $R\otimes_{\mathbb{F}_q(t)}(P\oplus P^\prime)^B$ which comes from the equivalence $\langle {P\oplus P^\prime} \rangle \simeq \operatorname{Rep}_{\mathbb{F}_q(t)}(\Gamma_{P\oplus P^\prime})$ is given by
\begin{equation}
    \Psi_{P\oplus P^\prime}^{-1}\mathbf{p}\rightarrow (\Psi_{P\oplus P^\prime}\gamma)^{-1}\mathbf{p},\quad \gamma \in  \Gamma_{P\oplus P^\prime}(R)
\end{equation}
where $\mathbf{p}$ is the $\overline{K}(t)$-basis of $P$ corresponding to $\Phi \oplus \Phi^\prime$ and the action on $\Gamma_{P}(R)$-action on $R\otimes_{\mathbb{F}_q(t)}P^B$ is given by the similar way (\cite[Theorem 4.5.3]{Papanikolas2008}). 


We consider the two $\Gamma_{P\oplus P^\prime}$-actions on $P^B$. The first one is the sub representation of the $\Gamma_{P\oplus P^\prime}(R)$-action $$\Gamma_{P\oplus P^\prime}(R) \curvearrowright R\otimes_{\mathbb{F}_q(t)}(P\oplus P^\prime)^B.$$  
The second one is induced by the action $\Gamma_{P} \curvearrowright R\otimes_{\mathbb{F}_q(t)}P^B$ via the surjection $\operatorname{pr}_1|_{\Gamma_{P\oplus P^\prime}}:\Gamma_{P\oplus P^\prime}\rightarrow \Gamma_{P}$. We can see that these two are the same one. Hence the left square of Lemma \ref{LemmaTannakianProj} is commutative. The commutativity of the right square is proved by similar arguments.

\section{Period interpretations}\label{sectionperiodinterpretation}

The core aim in this paper is considering the transcendence of the Taylor coefficients of $t$-motivic MZVs and $t$-motivic CMPLs at $t=\theta$, and they arise from the periods of some concrete pre-$t$-motives. 
In this section, we construct these pre-$t$-motives by considering the pre-$t$-motives which give us period interpretations of MZVs and special values of CMPLs introduced in \cite{Anderson2009} and \cite{Chang2014}, and applying the technique called prolongations~\cite{Maurischat2018} to them.

 \subsection{Carlitz multiple polylogarithms and Anderson-Thakur series}\label{subsectionATseries}
We recall the notion of $t$-motivic Carlitz multiple polylogarithms, Anderson-Thakur polynomials, and Anderson-Thakur series which play important roles in the period interpretations of MZV's.

For a polynomial
\begin{equation}
    u=\sum_{i=0}^m a_i t^i \in \overline{K}[t],
\end{equation}
we put $||u||_\infty:=\max_{i}(|a_i|_\infty)$. 
Let us take an index $\mathbf{s}=(s_1,\,\dots,\,s_d) \in \mathbb{Z}_{\geq 1}^d$ and a tuple $\mathbf{u}=(u_1,\,\dots,\,u_d) \in \overline{K}[t]^d$ of polynomials with $||u_i||_\infty<| \theta|_\infty^{\frac{s_i
q}{q-1}}$ ($1 \leq i \leq d$). Then we define \textit{$t$-motivic Carlitz multiple polylogarithm} (\cite{Chang2014}) as follows:
\begin{equation}
    \mathcal{L}_{\mathbf{u},\mathbf{s}}(t):=
    \sum_{i_1>\cdots>i_d\geq0}\frac{u_1^{(i_1)}\cdots u_d^{(i_d)}}{((t-\theta)^{(1)}\cdots(t-\theta)^{(i_1)})^{s_1}\cdots ((t-\theta)^{(1)}\cdots(t-\theta)^{(i_r)})^{s_r}}.
\end{equation}
Note that it satisfies the following equation:
\begin{equation}
    \mathcal{L}_{\mathbf{u},\mathbf{s}}^{(-1)}=\frac{u_d^{(-1)}\mathcal{L}_{(u_1,\,\dots,\,u_{d-1}),\,(s_1,\,\dots,\,s_{d-1})}}{(t-\theta)^{s_1+\cdots+s_{d-1}}}  \label{FrobEqPolylog} 
      +\frac{\mathcal{L}_{\mathbf{u},\mathbf{s}}}{(t-\theta)^{s_1+\cdots+s_{d}}}.
\end{equation}
In the case that $u_1,\,\dots,u_d \in \overline{K}$, the value at $t=\theta$ is equal to the special value
\begin{equation}
    \operatorname{Li}_{\mathbf{s}}(\mathbf{u})
    :=\sum_{i_1>\cdots>i_d\geq0}\frac{u_1^{(i_1)}\cdots u_d^{(i_d)}}{((\theta-\theta^{(1)})\cdots(\theta-\theta^{(i_1)}))^{s_1}\cdots ((\theta-\theta^{(1)})\cdots(\theta-\theta^{(i_r)}))^{s_r}}.
\end{equation}
of \textit{Carlitz multiple polylogarithm} (\cite{Chang2014}).

Anderson and Thakur introduced in \cite{Anderson1990} a sequence $H_0,\,H_1,\,\dots \in A[t]$ of polynimials, which are called \textit{Anderson-Thakur polynomials} by the following generating series:
\begin{equation}
  \left(1-\sum_{i\geq0}\frac{\prod_{j=1}^i (t^{q^i}-\theta^{q^j})}{\prod_{j=0}^{i-1}(t^{q^i}-t^{q^j})}x^{q^i} \right)^{-1}=\sum_{s\geq0} \frac{H_s(t)}{\Gamma_{s+1|_{\theta=t}}}x^{q^s}.
\end{equation}
Here $\Gamma_{s+1}$ is the Carlitz factorial defined as follows: for a non-negative integer $s$ with the $q$-adic digit expansion
\begin{equation}
    s=\sum_{i=0}^m s_{(i)}q^i, \quad (0 \leq s_{(i)} \leq q-1)
\end{equation}
we put 
\begin{equation}
    \Gamma_{s+1}:=\prod_{i=0}^m D_i^{s_{(i)}}\in A
\end{equation}
where $D_i$ is the product of all monic polinomial in $A$ of degree $i$, see \cite{ThakurBook} for details.
Anderson-Thakur polynomials enable us to interpret MZV's in terms of special values of Carlitz multiple polylogarithms.
Anderson and Thakur~(\cite[(3.7.3)]{Anderson1990}) obtained the inequality \begin{equation}||H_{s-1}(t)||_\infty \leq |\theta |_\infty^{\frac{sq}{q-1}},
\end{equation}
for $s\geq1$ and they showed that we have
\begin{equation}
   \mathcal{L}_{(H_{s_1-1},\,\dots,\,H_{s_r-1}),(s_1,\,\dots,\,s_d)}(t)|_{t=\theta}=\Gamma_{s_1}\cdots\Gamma_{s_r}\zeta_A(s_1,\,\dots,\,s_d), \label{ATpolynom}
\end{equation}
for any index $(s_1,\,\dots,\,s_d)\in \mathbb{Z}_{\geq 1}^d$ (\cite[Theorem 3.8.3]{Anderson1990} and \cite{Anderson2009}, see also \cite{Chang2014}).
\begin{definition}\label{DefATseries}
For an index $\mathbf{s}=(s_1,\,\dots,\,s_d) \in \mathbb{Z}_{\geq 1}^d$, the \textit{Anderson-Thakur series} $\zeta_A^{\mathrm{AT}}(s_1,\,\dots,\,s_r)$ is defined to be the series
\begin{equation}
    \mathcal{L}_{(H_{s_1-1},\,\dots,\,H_{s_r-1}),(s_1,\,\dots,\,s_d)}(t)\in \mathbb{C}_{\infty}((t)).
\end{equation}
By equation \eqref{ATpolynom}, these series can be viewed as \textit{$t$-motivic multiple zeta values}.
\end{definition}

Let us recall the period interpretations of multiple zeta values and special values of Carlitz multiple polylogarithms at algebraic points (\cite{Anderson2009} and \cite{Chang2014}). We take $\mathbf{s}=(s_1,\,\dots,\,s_r) \in \mathbb{Z}_{\geq 1}^d$ and $\mathbf{u}=(u_1,\,\dots,\,u_d) \in \overline{K}[t]^d$ so that
\begin{equation}
        ||u_i||_\infty<|\theta|_\infty^{\frac{s_i q}{q-1}},
    \end{equation}
and consider the pre-$t$-motive $M[\mathbf{u};\mathbf{s}]$ defined by the matrix
\begin{equation}
\Phi[\mathbf{u};\mathbf{s}] :=\begin{pmatrix}
   (t-\theta)^{s_{1}+\cdots+s_{d}} & 0& \cdots && \\
   (t-\theta)^{s_{1}+\cdots+s_{d}} u_{1}^{(-1)} & (t-\theta)^{s_{2}+\cdots+s_{d}}&0&\cdots&\\
   &\ddots&\ddots&\ddots& \\
   & &&(t-\theta)^{s_{d}}& 0\\
  & & &(t-\theta)^{s_{d}} u_{d}^{(-1)} &1
\end{pmatrix}.\label{PhiUS}
\end{equation}
As we have equation \eqref{FrobEqPolylog}, this defining matrix has rigid analytic trivialization 
\begin{align}
\Psi[\mathbf{u};\mathbf{s}]\label{PsiUS}
&:=\begin{pmatrix}
   \Omega^{s_{1}+\cdots+s_{d}} & 0& \cdots && \\
   \Omega^{s_{1}+\cdots+s_{d}}  L_{1,\,1} &  \Omega^{s_{2}+\cdots+s_{d}}&0&\cdots&\\
   \vdots&&\ddots&\ddots& \\
  \Omega^{s_{1}+\cdots+s_{d}}  L_{1,\,{d-1}}& &&\Omega^{s_{d}} & 0\\
  \Omega^{s_{1}+\cdots+s_{d}}  L_{1,\,d}&   \Omega^{s_{2}+\cdots+s_{d}} L_{2,\,d}&\cdots& \Omega^{s_{d}}  L_{d}&1
\end{pmatrix}
\end{align}
where each $L_{i,\,j}$ denotes the series $\mathcal{L}_{(u_i,\,u_{i+1},\,\dots,\,u_j),\,(s_i,\,s_{i+1},\,\dots,\,s_j)}$ for $1\leq i\leq j\leq d$ and $\Omega$ is the series introduced in Example \ref{Omega}. Considering the case where $u_i=H_{s_i-1}$ for $1 \leq i \leq d$, we can see that the MZV $\zeta_A(s_1,\,\dots,\,s_d)$ can be written in terms of periods of pre-$t$-motives.

\begin{Remark}
Let us recall the notation in Equation \eqref{GammaPsi}.
For later use, we mentioned that the $(i,\,j)$-component of the matrix $\widetilde{\Psi[\mathbf{u};\mathbf{s}]}$ is given by
    \begin{align}
       &(\Omega^{-1}\otimes \Omega)^{s_i+\cdots+s_d}\\ &\cdot\sum_{n=j}^{i}\sum_{m=0}^{i-n}(-1)^m \sum_{\substack{n=k_0<k_1<\cdots\\
    \cdots<k_{m-1}<k_m=i}}L_{k_1,\,k_0}L_{k_2,\,k_1}\cdots L_{k_m,\,k_m-1}\otimes \Omega^{s_1+\cdots+s_d}L_{n,\,j}
    \end{align}
for $1 \leq j \leq i \leq d+1$, and $(i,\,i)$-component is given by $(\Omega^{-1}\otimes \Omega)^{s_i+\cdots+s_d}$ for $1 \leq i \leq d+1$ (see \cite{Mishiba2015}). Here, we put $L_{i,\,i}=1$ for $1 \leq i \leq d+1$ by convention. Indeed, we have
\begin{align}
     &(\Omega^{-1}\cdot\Omega)^{s_i+\cdots+s_d}\\
     &\quad \cdot\sum_{n=j}^{i}\sum_{m=0}^{i-n}(-1)^m \sum_{\substack{n=k_0<k_1<\cdots\\
    \cdots<k_{m-1}<k_m=i}}L_{k_1,\,k_0}L_{k_2,\,k_1}\cdots L_{k_m,\,k_m-1}\Omega^{s_1+\cdots+s_d}L_{n,\,j}\\
    =&\sum_{n=j}^{i}\sum_{m=0}^{i-n}(-1)^m \sum_{\substack{n=k_0<k_1<\cdots\\
    \cdots<k_{m-1}<k_m=i}}L_{k_1,\,k_0}L_{k_2,\,k_1}\cdots L_{k_m,\,k_m-1}\Omega^{s_1+\cdots+s_d}L_{n,\,j}\\
    =&\sum_{m=0}^{i-j}(-1)^m \sum_{\substack{j=k_{-1} \leq k_0<k_1<\cdots\\
    \cdots<k_{m-1}<k_m=i}}L_{k_0,\,k_{-1}}L_{k_1,\,k_0}L_{k_2,\,k_1}\cdots L_{k_m,\,k_m-1}\Omega^{s_1+\cdots+s_d}=0
\end{align}
for $1 \leq j \leq i \leq d+1$.
\end{Remark}

\begin{example}\label{example depth 1}
  We consider the case $\mathbf{s}=(s)$ and $\mathbf{u}=(u)$. Let us assume 
\begin{equation}
    ||u||_\infty<|\theta|_\infty^{\frac{s q}{q-1}}.
\end{equation}
The pre-$t$-motive $C\oplus M[\mathbf{u};\mathbf{s}]$ is with a defining matrix and its rigid analytic trivialization
  \begin{equation}
      (t-\theta)\oplus \begin{pmatrix}
          (t-\theta)^{s}&0\\
          (t-\theta)^{s} u^{(-1)}&1
      \end{pmatrix}
      \text{ and }
      (\Omega)\oplus\begin{pmatrix}
          \Omega^{s}&0\\
          \Omega^{s}\mathcal{L}_{\mathbf{u},\,\mathbf{s}}&1
      \end{pmatrix}.
  \end{equation}
  Therefore, the $t$-motivic Galois group $\Gamma_{C\oplus M[\mathbf{u};\mathbf{s}]}\simeq \Gamma_{(\Omega)\oplus \Psi[\mathbf{u};\mathbf{s}]}$ is a closed subgroup of the algebraic group
  \begin{equation}
      \left \{ a\oplus\begin{pmatrix}
          a^{s}&0\\
          a^{s} x&1
          \end{pmatrix} \middle|\,a\neq 0 \right\}.
  \end{equation}
  Chang and Yu~\cite{Chang2007} show that $\tilde{\pi}$ and $\mathcal{L}_{\mathbf{u},\,\mathbf{s}}|_{t=\theta}$ are algebraically independent whenever $s$ is not divisible by $q-1$, $\mathcal{L}_{\mathbf{u},\,\mathbf{s}} \in \mathbb{T}$, and $\mathcal{L}_{\mathbf{u},\,\mathbf{s}}|_{t=\theta}\neq 0$. Therefore, we have $\dim \Gamma_{C\oplus M[\mathbf{u};\mathbf{s}]} =2$ and 
  \begin{equation}
     \Gamma_{C\oplus M[\mathbf{u};\mathbf{s}]} \simeq \Gamma_{(\Omega)\oplus \Psi[\mathbf{u};\mathbf{s}]} \left \{ a\oplus\begin{pmatrix}
          a^{s}&0\\
          a^{s} x&1
          \end{pmatrix} \middle|\,a\neq 0 \right\}.
  \end{equation}
\end{example}

 \subsection{Hyperderivatives and prolongations of pre-$t$-motives}\label{subsectionprolongation}
We study transcendence properties of Taylor coefficients of the Anderson-Thakur series and $t$-motivic CMPLs at $t=\theta$. These coefficients arise from the periods of some concrete pre-$t$-motives $\rho_n M[\mathbf{u},\,\mathbf{s}]$. 
In this section, we construct these pre-$t$-motives $\rho_n M[\mathbf{u},\,\mathbf{s}]$ by considering the pre-$t$-motives $M[\mathbf{u},\,\mathbf{s}]$'s in Section \ref{subsectionATseries}, which give us period interpretations of MZV's and special values of CMPLs and were introduced in \cite{Anderson2009} and \cite{Chang2014}, and applying the technique called prolongations, which is introduced by Maurischat~(\cite{Maurischat2018}), to those pre-$t$-motives $M[\mathbf{u},\,\mathbf{s}]$'s.

Maurischat introduced the technique called prolongations, which enables us to deal with Taylor coefficients at $t=\theta$ of entries of rigid analytic trivialization of a matrix defining a given pre-$t$-motive. 
We also mention that Namoijam and Papanikolas (\cite{Namoijam2022}) studied hyperderivatives of entries of rigid analytic trivialiations of matrices defining pre-$t$-motives.
In this section, we review Maurischat's theory on prolongations in the language of pre-$t$-motives and obtain period interpretations of Taylor coefficients at $t=\theta$ of $t$-motivic CMPLs and of Anderson-Thakur series as applications.

\begin{definition}

For each $n \geq0$, we define the $\mathbb{C}_\infty$-linear operator $\partial^{(n)}$ on $\mathbb{C}_\infty((t))$, which is called \textit{$n$-th hyperderivative}, by
\begin{align}
  &\partial^{(n)}:& &\mathbb{C}_\infty((t))& &\rightarrow& &\mathbb{C}_\infty ((t))  \label{defHD} \\
 && &\sum_{i=m}^\infty a_i t^i& &\mapsto&
 &\sum_{i=m}^\infty \binom{i}{n} a_i t^{i-n} 
\end{align}
where $a_i \in\mathbb{C}_\infty$, $\binom{i}{n}$ are the binomial coefficients modulo $p$, and we put $\binom{i}{n}=0$ for $i <n$.
\end{definition}
We have the following analogue of the Leibniz rule
\begin{equation}
   \partial^{(n)}(f_1\cdots f_r)=\sum_{j_1+j_2+\cdots+j_s=n}\partial^{(j_1)}f_1\partial^{(j_2)}f_2\cdots \partial^{(j_r)}f_r \label{Leibniz}
\end{equation}
for $n \geq 0$ and $f_1,\,\dots,\,f_r \in \mathbb{C}_\infty((t))$, see \cite{Maurischat2022}.
The following proposition relates the considerations on hyperderivatives of a power series in $\mathbb{T}$ to considering its Taylor coefficients at $t=\theta$ if the series converges at $t=\theta$:

\begin{Proposition}[{\cite[Corollary 2.7]{Uchino1998}}]\label{Uchino}
   Suppose $f \in \mathbb{T}$ converges at $t=\theta$ and write
   \begin{equation}
  f=\sum_{i=0}^\infty a_i (t-\theta)^i\in \mathbb{T},
  \end{equation}
  then its hyperderivative $\partial^{(i)}f$ also converges at $t=\theta$ for each $i$ and the equalityh $\partial^{(i)}f|_{t=\theta}=a_i$ holds.
\end{Proposition}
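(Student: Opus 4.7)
The plan is to reduce the statement to two ingredients: the formula
\begin{equation}
\partial^{(n)}\bigl((t-\theta)^{i}\bigr)=\binom{i}{n}(t-\theta)^{i-n}
\end{equation}
for every $i\geq 0$, and the continuity of $\partial^{(n)}$ on the appropriate space of convergent power series, which together permit differentiating $f=\sum_{i}a_{i}(t-\theta)^{i}$ term by term. For the base identity, one reads directly from the definition \eqref{defHD} that $\partial^{(0)}(t-\theta)=t-\theta$, $\partial^{(1)}(t-\theta)=1$, and $\partial^{(k)}(t-\theta)=0$ for $k\geq 2$. Applying the Leibniz rule \eqref{Leibniz} to the $i$-fold product $(t-\theta)(t-\theta)\cdots(t-\theta)$, the only compositions $j_{1}+\cdots+j_{i}=n$ that contribute are those with each $j_{\ell}\in\{0,1\}$, of which there are exactly $\binom{i}{n}$; each such composition produces $(t-\theta)^{i-n}$, giving the claimed formula.

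For the second ingredient, I would observe that binomial coefficients reduce mod $p$ to elements of $\mathbb{F}_{p}$, so $|\binom{i}{n}|_{\infty}\leq 1$ for all $i$. Consequently $\partial^{(n)}$ preserves the Gauss norm bound on $\mathbb{T}$ and is continuous, and the same estimate shows that it preserves convergence on any closed disk of radius $r$: if $\sum_{i}c_{i}(t-\theta)^{i}$ converges at some point in such a disk, then so does $\sum_{i}\binom{i}{n}c_{i}(t-\theta)^{i-n}$. Since the hypothesis that $f\in\mathbb{T}$ converges at $t=\theta$ is equivalent to saying that the Taylor expansion of $f$ around $\theta$ has radius of convergence at least $|\theta|_{\infty}^{1}$ in a disk containing $\theta$, continuity lets me differentiate termwise to obtain
\begin{equation}
\partial^{(n)}f=\sum_{i\geq n}\binom{i}{n}\,a_{i}\,(t-\theta)^{i-n},
\end{equation}
which again converges at $t=\theta$.

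The conclusion is then immediate: specializing the displayed series at $t=\theta$ annihilates every summand with $i>n$, leaving precisely $\binom{n}{n}a_{n}=a_{n}$. The one step that I expect to require the most care is the interchange of $\partial^{(n)}$ with the infinite sum, since the original definition of $\partial^{(n)}$ in \eqref{defHD} is given in terms of the expansion around $t=0$ rather than around $t=\theta$; however, once continuity is phrased in terms of a Gauss-norm estimate that is independent of the chosen center, the interchange follows by approximating $f$ by the partial sums of its Taylor expansion around $\theta$ and taking limits.
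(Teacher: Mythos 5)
The paper does not prove this statement at all: it is quoted as \cite[Corollary 2.7]{Uchino1998} and used as a black box, so there is no internal argument to compare yours against. Your self-contained proof is correct. The two ingredients are exactly the right ones: the identity $\partial^{(n)}\bigl((t-\theta)^{i}\bigr)=\binom{i}{n}(t-\theta)^{i-n}$ follows from the Leibniz rule \eqref{Leibniz} as you say (only compositions with every $j_{\ell}\in\{0,1\}$ survive, and the integer count $\binom{i}{n}$ reduces mod $p$, consistently with the paper's convention in \eqref{defHD}), and the bound $|\binom{i}{n}|_{\infty}\leq 1$ gives $\|\partial^{(n)}g\|_{r}\leq r^{-n}\|g\|_{r}$ on the disk of radius $r$, which justifies passing to the limit along the partial sums of the expansion at $\theta$. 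You also correctly flag the one genuinely delicate point, namely that $\partial^{(n)}$ is defined through the expansion at $t=0$ while the limit is taken in the expansion at $t=\theta$; this is resolved because the hypotheses force $f$ to be rigid analytic on the disk $\{|t|_{\infty}\leq|\theta|_{\infty}\}=\{|t-\theta|_{\infty}\leq|\theta|_{\infty}\}$, on which the sup norm agrees with the Gauss norm computed from either center, so the operator norm estimate is center-independent and termwise differentiation followed by evaluation at $t=\theta$ yields $\binom{n}{n}a_{n}=a_{n}$. The only cosmetic issues are the loose phrase ``radius of convergence at least $|\theta|_{\infty}^{1}$'' (you mean convergence on the closed disk of radius $|\theta|_{\infty}=q$, which in the non-archimedean setting is automatic from convergence at the single boundary point $\theta$) and the fact that you tacitly assume the existence of the re-centered expansion, which the statement itself already presupposes.
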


For any matrix $B=(b_{ij})$ with entries in $\mathbb{C}_\infty((t))$, we define $\partial^{(n)}B:=(\partial^{(n)}b_{ij})$. 
Using equation \eqref{Leibniz}, we can show that the map \begin{equation}
    \rho_{n}:\operatorname{Mat}_{d\times d}\left(\mathbb{C}_\infty((t))\right) \rightarrow \operatorname{Mat}_{d(n+1)\times d(n+1)}\left(\mathbb{C}_\infty((t))\right)
\end{equation}
    defined by
\begin{equation}
    \rho_n(X):=
    \begin{pmatrix}
   X & 0 &\cdots&\cdots&0\\
   \partial^{(1)}X & X&0&\cdots&0\\
   \vdots& \vdots&\ddots&&\vdots \\
   \partial^{(n-1)}X & \partial^{(n-2)}X&\cdots&X&0\\
   \partial^{(n)}X &\partial^{(n-1)}X &\cdots&\partial^{(1)}X &X
\end{pmatrix}
\end{equation}
is a homomorphism of $\mathbb{C}_\infty((t))$-algebras for each $n \geq 0$, see \cite{Maurischat2022}.

\begin{definition}
    For a pre-$t$-motive defined by a matrix $\Phi \in \operatorname{Mat}_r(\overline{K}[t])\cap \operatorname{GL}_r(\overline{K}(t))$, then its $n$-th prolongation $\rho_n P$ is the pre-$t$-motive defined by the matrix $\rho_n \Phi\in \operatorname{Mat}_{(n+1)r}(\overline{K}[t])\cap \operatorname{GL}_{(n+1)r}(\overline{K}(t))$.
\end{definition}
If we have an equation $\Psi^{(-1)}=\Phi \Psi$, then we have $(\rho_n\Psi)^{(-1)}=(\rho_n\Phi)\cdot( \rho_n\Psi)$ for each $n$ as $\rho_n$ is a homomorphism and commutes with $(-1)$-th fold twisting. Therefore, if $P$ is rigid analytically trivial then it follows that so is its $n$-th prolongation $\rho_n P$.

\begin{example} \label{Maurischatresult}
    For $n \geq1$, the $n$-th prolongation $\rho_nC$ of the Carlitz motive $C$ has the following representing matrix

\begin{equation}
\begin{pmatrix}
    t-\theta& 0&0 &\cdots & 0\\
    1& t-\theta & 0&\cdots & 0  \\
    0 &1 & t-\theta & \ddots&\vdots\\
    \vdots &\ddots & \ddots &\ddots&0\\
    0& 0 &\cdots & 1&t-\theta
\end{pmatrix}
\end{equation}
of size $n+1$, which has a rigid analytic trivialization
\begin{equation}
\begin{pmatrix}
   \Omega & 0 &\cdots&0\\
   \partial^{(1)} \Omega & \Omega&\cdots&0\\
   \vdots& \vdots&\ddots&\vdots \\
   \partial^{(n)} \Omega & \partial^{(n-1)} \Omega &\cdots& \Omega 
\end{pmatrix}.
\end{equation}
By Equation \eqref{GammaPsi} and Theorem \ref{ThmPapanikolas}, we have the closed immersion 
\begin{equation}
\Gamma_{\rho_n C} \subset \left \{  \begin{pmatrix}
   a_0 & 0 &\cdots&0\\
   a_1 & a_0&\cdots&0\\
   \vdots& \vdots&\ddots&\vdots \\
   a_{n}& a_{n-1}&\cdots&a_0
\end{pmatrix} \, \middle|\, a_0\in \mathbb{G}_m,\,a_1,\,\dots,\,a_n \in \mathbb{A}
\right\},
\label{MaurischatImmersion}
\end{equation}
where the latter is an algebraic group over $\overline{\mathbb{F}_q(t)}$ isomorphic to the smooth and connected variety $\mathbb{G}_m \times \mathbb{A}^n$ as varieties. Since Maurischat~(\cite[Theorem 7.2]{Maurischat2018}) showed that the values
$$
\Omega|_{t=\theta},\,\partial \Omega|_{t=\theta},\dots,\, \partial^{(n)} \Omega|_{t=\theta}
$$
are algebraically independent over $\overline{K}$, we have $\dim \Gamma_{\rho_n C}=n+1$ by Theorem \ref{ThmPapanikolas}. Hence the closed immersion \eqref{MaurischatImmersion} is an isomorphism. We note that the group $\Gamma_{\rho_n C}(\overline{\mathbb{F}_q(t)})$ of $\overline{\mathbb{F}_q(t)}$-valued points is commutative. Indeed, if we take two elements
\begin{equation}
    \begin{pmatrix}
   a_0 & 0 &\cdots&\cdots&\cdots&0\\
   a_1 & a_0&0&\cdots&\cdots&0\\
   a_2 & a_1& a_0&0&\cdots&0\\
   \vdots&\ddots& \ddots&\ddots&&\vdots \\
   a_{n-1}& a_{n-2}&\cdots &a_1&a_0&0\\
   a_n&a_{n-1}&\cdots &\cdots&a_1&a_0
\end{pmatrix} \text{ and }
\begin{pmatrix}
   a_0^\prime & 0 &\cdots&\cdots&\cdots&0\\
   a_1\prime & a_0\prime&0&\cdots&\cdots&0\\
   a_2\prime & a_1\prime& a_0\prime&0&\cdots&0\\
   \vdots&\ddots& \ddots&\ddots&&\vdots \\
   a_{n-1}\prime& a_{n-2}\prime&\cdots &a_1\prime&a_0\prime&0\\
   a_n\prime&a_{n-1}\prime&\cdots &\cdots&a_1\prime&a_0\prime
\end{pmatrix}
\end{equation}
of the group, then the product is the lower triangle matrix whose $(i,\,j)$-entry is
\begin{equation}
    \sum_{l=0}^{i-j} a_l a_{i-j-l}^\prime
\end{equation}
for $ i \geq j$.
\end{example}
For a non-negative integers $0 \leq n^\prime \leq n$ and pre-$t$-motive $P$, the prolongation $\rho_{n^\prime}P$ is a sub-pre-$t$-motive of $\rho_{n}P$ (cf. \cite[Remark 3.2]{Maurischat2018}). In particular, the prolongation $\rho_{n^\prime}C$ is a sub-pre-$t$-motive of $\rho_{n}C$ and hence $\langle \rho_{n^\prime}C \rangle $ can be seen as a full subcategory of $\langle \rho_{n}C \rangle$. Therefore we have faithfully flat morphism $\pi: \Gamma_{\rho_{n}C} \twoheadrightarrow \Gamma_{\rho_{n^\prime}C}$. This surjection is described concretely by the following proposision:
\begin{Proposition}\label{ExamplerhoCsurj}
     The morphism $\pi:\Gamma_{\rho_{n}C} \twoheadrightarrow \Gamma_{\rho_{n^\prime}C}$ above is given by
\begin{equation}\label{explicitmorph}
    \begin{pmatrix}
   a_0 & 0 &\cdots&0\\
   a_1 & a_0&\cdots&0\\
   \vdots& \vdots&\ddots&\vdots \\
   a_{n}& a_{n-1}&\cdots&a_0
\end{pmatrix}
\mapsto
\begin{pmatrix}
   a_0 & 0 &\cdots&0\\
   a_1 & a_0&\cdots&0\\
   \vdots& \vdots&\ddots&\vdots \\
   a_{n^\prime}& a_{n^\prime-1}&\cdots&a_0
\end{pmatrix}.
\end{equation}
\end{Proposition}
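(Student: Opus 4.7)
The plan is to apply Lemma \ref{LemmaTannakianProj} to the pair $P := \rho_n C$ and $P' := \rho_{n'} C$. Since $P'$ is a sub-pre-$t$-motive of $P$, we have $\langle P \oplus P' \rangle = \langle P \rangle$, so the projection $\Gamma_{P \oplus P'} \twoheadrightarrow \Gamma_P$ appearing in the lemma is an isomorphism. Under the embedding $\Gamma_{P \oplus P'} \hookrightarrow \Gamma_P \times \Gamma_{P'}$ of the lemma, $\Gamma_{P \oplus P'}$ is therefore identified with the graph of $\pi : \Gamma_P \twoheadrightarrow \Gamma_{P'}$. Thus the task reduces to explicitly computing $\Gamma_{P \oplus P'}$ and matching it with the graph of the truncation map $\tau$ appearing in \eqref{explicitmorph}.

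Next, I take the rigid analytic trivializations $\Psi := \rho_n \Omega$ and $\Psi' := \rho_{n'} \Omega$ from Example \ref{Maurischatresult}; both are lower triangular Toeplitz matrices, and $\Psi'$ is precisely the upper-left $(n'+1) \times (n'+1)$ block of $\Psi$. The key elementary fact is a truncation lemma for lower triangular Toeplitz matrices: truncation of a product equals the product of the truncations (immediate from the Toeplitz multiplication formula $c_i = \sum_{l=0}^i a_l b_{i-l}$), and truncation of an inverse equals the inverse of the truncation (by the recursive formula for the inverse entries). Applying this to $\widetilde{\Psi} = (\Psi \otimes 1)^{-1}(1 \otimes \Psi)$ shows that $\widetilde{\Psi'}$ is the upper-left $(n'+1) \times (n'+1)$ block of $\widetilde{\Psi}$, and it also implies that the truncation map $\tau : \Gamma_{\rho_n C} \to \Gamma_{\rho_{n'} C}$ appearing in \eqref{explicitmorph} is a well-defined morphism of algebraic groups.

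To conclude, let $G \subseteq \Gamma_{\rho_n C} \times \Gamma_{\rho_{n'} C}$ denote the graph of $\tau$, a closed subgroup of dimension $n+1$. The point $(\widetilde{\Psi}, \widetilde{\Psi'}) = (\widetilde{\Psi}, \tau(\widetilde{\Psi}))$ lies in $G$, so by the minimality characterization in Equation \eqref{GammaPsi}, $\Gamma_{P \oplus P'} = \Gamma_{\Psi \oplus \Psi'} \subseteq G$. Since $\dim \Gamma_{P \oplus P'} = \dim \Gamma_P = n+1 = \dim G$ and both are smooth and connected (via the isomorphism with $\mathbb{G}_m \times \mathbb{A}^n$ established in Example \ref{Maurischatresult}), equality holds and hence $\pi = \tau$, as desired. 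The only technical subtlety is verifying the Toeplitz-truncation lemma; everything else follows formally from the machinery set up in Section \ref{sectionPreliminaries} and Example \ref{Maurischatresult}.
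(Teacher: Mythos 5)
Your proof is correct and follows essentially the same route as the paper: both arguments pass to the direct sum $\rho_{n'}C\oplus\rho_n C$, invoke Lemma \ref{LemmaTannakianProj}, use that $\langle \rho_{n'}C\oplus\rho_n C\rangle=\langle\rho_n C\rangle$ so the first projection is an isomorphism, and identify $\Gamma_{\rho_{n'}C\oplus\rho_n C}$ inside the graph of the truncation map via the minimality characterization of $\Gamma_{\Psi\oplus\Psi'}$. Your explicit Toeplitz-truncation lemma (truncation commutes with products and inverses of lower-triangular Toeplitz matrices, hence $\widetilde{\Psi'}$ is the truncation of $\widetilde{\Psi}$) is a point the paper leaves implicit when it writes elements of $\Gamma_{(\rho_{n'}C)\oplus(\rho_n C)}$ in truncated block form, and your concluding dimension count is an acceptable substitute for the paper's appeal to reducedness.
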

\begin{proof}
In order to deduce this proposition by Lemma \ref{LemmaTannakianProj}, we further consider the pre-$t$-motive $(\rho_{n^\prime}C)\oplus(\rho_{n}C)$. Both of the Tannakian categories $\langle \rho_{n^\prime}C \rangle $ and $\langle \rho_{n}C \rangle$ are subcategories of $\langle (\rho_{n^\prime}C)\oplus(\rho_{n}C) \rangle$, so we have faithfully flat morphisms $\pi^\prime$ and $\pi^{\prime \prime}$ which make the following diagram commutative:
\begin{center}
\begin{tikzpicture}[auto]
 \node (33) at (3, 3) {$\Gamma_{(\rho_{n^\prime}C)\oplus(\rho_{n}C)}$};
\node (30) at (3, 0) {$\Gamma_{\rho_{n}C}$}; \node (60) at (6, 0) {$\Gamma_{\rho_{n^\prime}C}$.};

\draw[->>] (33) to node {$\pi^{\prime \prime}$}(60);

\draw[->>] (30) to node {$\pi$}(60);

\draw[->>] (33) to node {$\pi^\prime$} (30);

\label{LemmaDiagram2}
\end{tikzpicture}.
\end{center}
By Lemma \ref{LemmaDiagram}, these morphisms are given by
\begin{align}
    \pi^{\prime}:\begin{pmatrix}
   a_0 & 0 &\cdots&0\\
   a_1 & a_0&\cdots&0\\
   \vdots& \vdots&\ddots&\vdots \\
   a_{n}& a_{n-1}&\cdots&a_0
\end{pmatrix}
\oplus
\begin{pmatrix}
   a_0 & 0 &\cdots&0\\
   a_1 & a_0&\cdots&0\\
   \vdots& \vdots&\ddots&\vdots \\
   a_{n^\prime}& a_{n^\prime-1}&\cdots&a_0
\end{pmatrix} \mapsto \begin{pmatrix}
   a_0 & 0 &\cdots&0\\
   a_1 & a_0&\cdots&0\\
   \vdots& \vdots&\ddots&\vdots \\
   a_{n}& a_{n-1}&\cdots&a_0
\end{pmatrix}\\
\intertext{and}
\pi^{\prime \prime}:\begin{pmatrix}
   a_0 & 0 &\cdots&0\\
   a_1 & a_0&\cdots&0\\
   \vdots& \vdots&\ddots&\vdots \\
   a_{n}& a_{n-1}&\cdots&a_0
\end{pmatrix}
\oplus
\begin{pmatrix}
   a_0 & 0 &\cdots&0\\
   a_1 & a_0&\cdots&0\\
   \vdots& \vdots&\ddots&\vdots \\
   a_{n^\prime}& a_{n^\prime-1}&\cdots&a_0
\end{pmatrix} \mapsto \begin{pmatrix}
   a_0 & 0 &\cdots&0\\
   a_1 & a_0&\cdots&0\\
   \vdots& \vdots&\ddots&\vdots \\
   a_{n^\prime}& a_{n^\prime-1}&\cdots&a_0
\end{pmatrix}.
\end{align}
We note that $\pi^{\prime}$ is an isomorphism since the two subcategories $\langle \rho_{n}C \rangle$ and $\langle (\rho_{n^\prime}C)\oplus(\rho_{n}C) \rangle$ are equal as subcategories of $\mathcal{R}$. As $t$-motivic Galois groups are reduced, the morphism $\pi$ can be written as \eqref{explicitmorph}.
\end{proof}

We define $ \mathcal{U}_{\rho_n C}$ to be the kernel of the mophism $\Gamma_{\rho_n C} \twoheadrightarrow \Gamma_{\rho_0C}=\Gamma_C \simeq \mathbb{G}_m$. We can show that
\begin{equation}\label{EqexplicitUrhonC}
   \mathcal{U}_{\rho_n C}\simeq  \left \{   \begin{pmatrix}
   1 & 0 &\cdots&\cdots&\cdots&0\\
   a_1 & 1&0&\cdots&\cdots&0\\
   a_2 & a_1& 1&0&\cdots&0\\
   \vdots&\ddots& \ddots&\ddots&&\vdots \\
   a_{n-1}& a_{n-2}&\cdots &a_1&1&0\\
   a_n&a_{n-1}&\cdots &\cdots&a_1&1
\end{pmatrix} \, \middle|\,a_1,\,\dots,\,a_n \in \mathbb{A}
\right\}.
\end{equation}

\begin{example}
    Let us put $\mathbf{s}=(s,\,s^\prime) \in \mathbb{Z}^2$ and $\mathbf{u}=(u,\,u^\prime)  \in \overline{K}[t]^2$ such that we have
    \begin{equation}
        ||u||_\infty <|\theta|^{\frac{sq}{q-1}} \text{ and } ||u^\prime||_\infty <|\theta|^{\frac{s^\prime q}{q-1}},
    \end{equation} 
    We can consider the pre-$t$-motive $M[\mathbf{u},\,\mathbf{s}]$ in Subsection \ref{subsectionATseries}. For $n\geq0$, its $n$-th prolongation $\rho_n M[\mathbf{u},\,\mathbf{s}]$ is represented by the matrix $\rho_n\Phi[\mathbf{u},\,\mathbf{s}]$, where $\Phi[\mathbf{u},\,\mathbf{s}]$ is the matrix defined in \eqref{PhiUS}. This has a rigid analytic trivialization $\rho_n \Psi[\mathbf{u},\,\mathbf{s}]$, where
        \begin{equation}
             \Psi[\mathbf{u},\,\mathbf{s}]:=
                \begin{pmatrix}
                    \Omega^{s+s^\prime}&0&0\\
                    \Omega^{s+s^\prime}\mathcal{L}_{(u),\,(s)}&\Omega^{s^\prime}&0 \\
                    \Omega^{s+s^\prime}\mathcal{L}_{\mathbf{u},\,\mathbf{s}}&\Omega^{s^\prime}\mathcal{L}_{(u^\prime),\,(s^\prime)}&1
                \end{pmatrix},
        \end{equation}
    see \eqref{PsiUS}. By definition, the matrix $\partial^{(n)}\Psi[\mathbf{u},\,\mathbf{s}]$ is equal to
    \begin{center}
        \begin{equation}\tiny{
                \begin{pmatrix}
                    \displaystyle{\sum_{i_1+\cdots+i_{s+s^\prime}=n}}(\partial^{(i_1)}\Omega)\cdots(\partial^{(i_{s+s^\prime})}\Omega)&0&0\\
                     \displaystyle{\sum_{i_1+\cdots+i_{s+s^\prime}+j=n}}(\partial^{(i_1)}\Omega)\cdots(\partial^{(i_{s+s^\prime})}\Omega)(\partial^{(j)}\mathcal{L}_{(u),\,(s)})&\displaystyle{\sum_{i_1+\cdots+i_{s^\prime}=n}}(\partial^{(i_1)}\Omega)\cdots(\partial^{(i_{s^\prime)}}\Omega)& 0\\
                    \displaystyle{\sum_{i_1+\cdots+i_{s+s^\prime}+j=n}}(\partial^{(i_1)}\Omega)\cdots(\partial^{(i_{s+s^\prime})}\Omega)(\partial^{(j)}\mathcal{L}_{\mathbf{u},\,\mathbf{s}})&\displaystyle{\sum_{i_1+\cdots+i_{s^\prime}+j=n}}(\partial^{(i_1)}\Omega)\cdots(\partial^{(i_{s^\prime})}\Omega)(\partial^{(j)}\mathcal{L}_{(u^\prime),\,(s^\prime)})&0
                \end{pmatrix}}
        \end{equation}
    \end{center}
    for $n \geq 1$. Therefore we can interpret the special values $\partial^{(n)}\mathcal{L}_{\mathbf{u},\,\mathbf{s}}|_{t=\theta}$ in terms of periods of the pre-$t$-motives $\rho_n M[\mathbf{u},\,\mathbf{s}]$ and of $\rho_n C$. 
\end{example}

We can obtain the period interpretations of the special values at $t=\theta$ of hyperderivatives $ \partial^{(n)}\mathcal{L}_{\mathbf{u},\mathbf{s}}(t)$ of $t$-motivic CMPLs for arbitrary $\mathbf{s}=(s_1,\,\dots,\,s_r) \in \mathbb{Z}_{\geq 1}^r$ and $\mathbf{u}=(u_1,\,\dots,\,u_r) \in \overline{K}[t]^r$ with 
\begin{equation}
        ||u_i||_\infty<|\theta|_\infty^{\frac{s_i q}{q-1}}
    \end{equation}
in the same manner, and those of Anderson-Thakur series by putting $u_i=H_{s_i-1}$.
By virtue of Proposition \ref{Uchino}, this means that we have period interpretations of Taylor coefficients at $t=\theta$ of $t$-motivic CMPLs and of Anderson-Thakur series.

\section{Carliz zeta values and Taylor coefficients of the power series $\Omega$}\label{sectionfirststep}

Our aim in this section is to show that there exists no algebraic relation among Taylor coefficients $\alpha_0,\,\alpha_1,\,\dots$ of the power series $\Omega$ (see Example \ref{Omega}) and Carlitz zeta values.
Namely, we will show that the set
\begin{equation}
    \{ \alpha_0,\,\dots,\,\alpha_n,\,\zeta_A(s_1),\,\dots,\,\zeta_A(s_r)\}
\end{equation}
is an algebraically independent set for any $n\geq0$ and the choice of positive integers $(s_1,\,\dots,\,s_r)$ such that $(q-1)\nmid s_i$ for $1 \leq i\leq r$ and the ratio $s_i /s_j$ is not in the set $p^{\mathbb{Z}}$ of integer powers of $p$. 
More generaly, we take polynomials $u_1,\,\dots,\,u_r \in \overline{K}[t]$ such that
    \begin{equation}\label{conditiononui_firststep1}
        ||u_i||_\infty<|\theta|_\infty^{\frac{s_i q}{q-1}}
    \end{equation}
    and
    \begin{equation}\label{conditiononui_firststep2}
    \mathcal{L}_{(u_i),\,(s_i)}(t)|_{t=\theta}\neq 0
    \end{equation}
for each $1 \leq i \leq r$ and prove the algebraic independence of the set 
\begin{equation}
    \{ \alpha_0,\,\dots,\,\alpha_n,\,\mathcal{L}_{(u_1),\,(s_1)}|_{t=\theta},\,\dots,\,\mathcal{L}_{(u_r),\,(s_r)}|_{t=\theta}\}.
\end{equation}

In order to obtain the aforementioned algebraically independence result, we will construct a certain pre-$t$-motive $M(n)$, which depends also on the choice of $s_1,\,\dots,\,s_r$ and $u_1,\,\dots,\,u_r$, for each $n$ and describe the structure of $t$-motivic Galois group of $M(n)$ explicitly. We construct an explicit algebraic group $G(n)$ for each $n \geq0$ and we will show that the $t$-motivic Galois group $\Gamma_{M(n)}$ is isomorphic to $G(n)$ by induction on $n$.

\begin{definition}\label{Defpretfirststep}
    Take an index $(s_1,\,\dots,\,s_r)\in \mathbb{Z}_{>0}^r$ and polynomials $u_1,\,\dots,\,u_r \in \overline{K}[t]$. For $1 \leq i \leq r$, write $M[i]$ for the pre-$t$-motive $M[(u_i);(s_i)]$ in Section \ref{subsectionATseries}. 
    For $n \geq 0$, we put \begin{equation}
        M(n):=\rho_n C \oplus \bigoplus_{i=1}^rM[i].
    \end{equation}
\end{definition}

The pre-$t$-motive $M(n)$ is represented by the matrix
\begin{equation}
    \Phi(n):=\begin{pmatrix}
    t-\theta& 0&0 &\cdots & 0\\
    1& t-\theta & 0&\cdots & 0  \\
    0 &1 & t-\theta & \ddots&\vdots\\
    \vdots &\ddots & \ddots &\ddots&0\\
    0& 0 &\cdots & 1&t-\theta
\end{pmatrix}\oplus \bigoplus_{i=1}^r
\begin{pmatrix}
    (t-\theta)^{s_i}&0\\
    (t-\theta)^{s_i}u_i^{(-1)}&1
\end{pmatrix}
\end{equation}
in $\operatorname{GL}_{n+1+2r}(\overline{K}(t)) \cap \operatorname{Mat}_{n+1+2r}(\overline{K}[t])$. If we have Inequalities \eqref{conditiononui_firststep1} and \eqref{conditiononui_firststep2}
    for each $1 \leq i \leq r$, then the representing matrix $\Phi(n)$ has a rigid analytic trivialization
\begin{equation}
\Psi(n):=\begin{pmatrix}
   \Omega0 & 0 &\cdots&\cdots&0\\
   \partial^{(1)}\Omega & \Omega&0&\cdots&0\\
   \partial^{(2)}\Omega & \partial^{(1)}\Omega & \Omega&\ddots&\vdots\\
   \vdots&\vdots& \ddots&\ddots&0 \\
   \partial^{(n)}\Omega & \partial^{(n-1)}\Omega &\cdots &\partial^{(1)}\Omega &\Omega
\end{pmatrix}\oplus \bigoplus_{i_1}^r \begin{pmatrix}
    \Omega^{s_i}&0\\
    \Omega^{s_i}\mathcal{L}_{(u_i),\,(s_i)}&1
\end{pmatrix},
\end{equation}
which is an element of $\operatorname{GL}_{n+1+2r}(\mathbb{L}) \cap \operatorname{Mat}_{n+1+2r}(\mathbb{T})$.

\begin{definition}
For $n \geq 0$ and an index $(s_1,\,\dots,\,s_r)$, let us put $G(n)$ to be the algebraic subgroup of $\operatorname{GL}_{n+1+2r}$ over $\overline{\mathbb{F}_q(t)}$ given as follows:
\begin{equation}
    G(n)(\overline{\mathbb{F}_q(t)}):=
    \left\{
        \begin{pmatrix}
   a_0 & 0 &\cdots&\cdots&0\\
   a_1 & a_0&0&\cdots&0\\
   a_2 & a_1& a_0&\ddots&\vdots\\
   \vdots&\vdots& \ddots&\ddots&0 \\
   a_{n}& a_{n-1}&\cdots &a_1&a_0\\
\end{pmatrix}
        \oplus \bigoplus_{i=1}^r 
        \begin{pmatrix}
            a_0^{s_i}& 0\\ 
            a_0^{s_i} x_i& 1
        \end{pmatrix}
    \,\middle | \,
    a_0 \neq 0
    \right\}.
\end{equation}
\end{definition}
We note that the dimension of $G(n)$ is $n+1+r$. The algebraic groups $G(n)$ will help us to study the structure of $t$-motivic Galois groups $\Gamma_{M(n)}$. We begin by obtaining the following inclusion:

\begin{Proposition}\label{firststeponesideinclusion}
    If the polynomials $u_1,\,\dots,\,u_r \in \overline{K}[t]$ are chosen so that Inequalities \eqref{conditiononui_firststep1} and \eqref{conditiononui_firststep1}
    hold for each $1 \leq i \leq r$, we have an closed immersion 
\begin{equation}
    \Gamma_{M(n)} \subset G(n)
\end{equation}
of algebaic groups over $\overline{\mathbb{F}_q(t)}$.
\end{Proposition}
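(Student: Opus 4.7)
The plan is to invoke Theorem~\ref{ThmPapanikolas}, which identifies $\Gamma_{M(n)}$ with $\Gamma_{\Psi(n)}$, the smallest closed subscheme of $\operatorname{GL}_{n+1+2r}/\mathbb{F}_q(t)$ containing the $\mathbb{L}\otimes_{\overline{K}(t)}\mathbb{L}$-valued point $\widetilde{\Psi(n)}:=\Psi(n)_1^{-1}\Psi(n)_2$. By minimality it therefore suffices to show that $\widetilde{\Psi(n)}\in G(n)(\mathbb{L}\otimes_{\overline{K}(t)}\mathbb{L})$; the desired closed immersion $\Gamma_{M(n)}\subset G(n)$ will then follow formally. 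The size hypothesis \eqref{conditiononui_firststep1} on the $u_i$ is what guarantees that each $\mathcal{L}_{(u_i),(s_i)}$ lies in $\mathbb{T}$, so that $\Psi(n)$ is genuinely a rigid analytic trivialization of $\Phi(n)$ in $\operatorname{GL}_{n+1+2r}(\mathbb{L})$ and the twist $\widetilde{\Psi(n)}$ is well defined.

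Since $\Psi(n)$ is block-diagonal with one $(n+1)\times(n+1)$ block $\rho_n\Psi_C$ and $r$ blocks $\Psi[i]$ of size $2$, the matrix $\widetilde{\Psi(n)}$ is itself block-diagonal with blocks $\widetilde{\rho_n\Psi_C}$ and $\widetilde{\Psi[i]}$ ($1\leq i\leq r$). The first block is handled by quoting Example~\ref{Maurischatresult}: it is lower-triangular Toeplitz with diagonal entry $a_0:=\Omega^{-1}\otimes\Omega$. For each of the remaining blocks I will carry out the direct $2\times 2$ calculation
\[
\widetilde{\Psi[i]}=\begin{pmatrix} a_0^{s_i} & 0 \\ a_0^{s_i}\,x_i & 1 \end{pmatrix},\qquad x_i := (\Omega^{s_i}\otimes 1)\bigl(1\otimes\mathcal{L}_{(u_i),(s_i)} - \mathcal{L}_{(u_i),(s_i)}\otimes 1\bigr).
\]

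The key consistency check---and what I expect to be the only substantive point in this proof---is that the parameter $a_0$ on the diagonal of the Toeplitz block is \emph{the same} element $\Omega^{-1}\otimes\Omega$ whose $s_i$-th power appears as the top-left entry of each $2\times 2$ block. This coincidence is forced because $\Omega$ is the only Carlitz-type deformation series entering any block, and it is exactly this coincidence that cuts out $G(n)$ inside the ambient product $\Gamma_{\rho_n C}\times\prod_{i=1}^r\Gamma_{M[i]}$. Once recorded, one reads off that $\widetilde{\Psi(n)}\in G(n)(\mathbb{L}\otimes\mathbb{L})$ and the proposition follows. The genuine difficulty in the paper will lie not in this inclusion but in the reverse inclusion $G(n)\subset\Gamma_{M(n)}$, for which one will have to compute $\dim\Gamma_{M(n)}$ via \eqref{Papanikolasthmtrdeg} and invoke known algebraic independence results for the values $\partial^{(k)}\Omega|_{t=\theta}$ and $\mathcal{L}_{(u_i),(s_i)}|_{t=\theta}$.
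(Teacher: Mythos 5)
Your proof is correct and takes essentially the same route as the paper's: one verifies that $\widetilde{\Psi(n)}$ is an $\mathbb{L}\otimes_{\overline{K}(t)}\mathbb{L}$-valued point of $G(n)$ (blockwise, using the Toeplitz structure from Example \ref{Maurischatresult} for the $\rho_n C$ block and the direct $2\times 2$ computation for the others), then concludes from the minimality characterization of $\Gamma_{\Psi(n)}$ and the identification $\Gamma_{M(n)}\simeq\Gamma_{\Psi(n)}$ of Theorem \ref{ThmPapanikolas}. Your explicit formula for $\widetilde{\Psi[i]}$, including the common diagonal parameter $a_0=\Omega^{-1}\otimes\Omega$ shared with the Toeplitz block, checks out.
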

\begin{proof}
By calculating $\widetilde{\Psi}(n)$ (here, we use the notation in Section \ref{subsectionPapanikolas}), we notice that $\widetilde{\Psi}(n) \in G(n)(\mathbb{L}\otimes_{\overline{K}(t)}\mathbb{L})$.
 As $\Gamma_{\Psi(n)}$ is characterized to be the smallest closed subscheme of $\operatorname{GL}_{n+1+2r}$ such that $\widetilde{\Psi}(n) \in \Gamma_{\Psi(n)}(\mathbb{L}\otimes_{\overline{\mathbb{F}_q(t)}}\mathbb{L}) $, we have $\Gamma_{\Psi(n)}\subset G(n)$. We can identify $\Gamma_{M(n)}$ as $\Gamma_{\Psi(n)}$ by Theorem \ref{ThmPapanikolas}.
\end{proof}
 
In the rest of this section, we assume that polynomials $u_1,\,\dots,\,u_r \in \overline{K}[t]$ are chosen so that we have Inequalities \eqref{conditiononui_firststep1} and \eqref{conditiononui_firststep2} for each $1 \leq i \leq r$.
We will prove that, for each $n \geq 0$, the $t$-motivic Galois group $\Gamma_{M(n)}$ equals $G(n)$ if $(q-1) \nmid s_i$ for each $1 \leq i \leq r$ and the ratio $s_i/s_j$ is not an integer power of $p$ for each $i \neq j$. 
In order to study the structure of algebraic groups $\Gamma_{M(n)}$, we consider their maximal unipotent subgroups instead.
Since the pre-$t$-motive $M(n)$ has the Carlitz motive as its sub-pre-$t$-motive, Tannakian duality yields the surjection $\pi_n:\Gamma_{M(n)} \twoheadrightarrow \Gamma_{C}$. We let $\mathcal{U}_{M(n)}$ be the kernel of $\pi_n$.
Recall that the unipotent algebraic group $\mathcal{U}_{\rho_n C}$ was defined to be the kernel of the surjection $\Gamma_{\rho_n C} \twoheadrightarrow \Gamma_C$ in Proposition \ref{ExamplerhoCsurj} for $n \geq 0$. 
\begin{Lemma}\label{Lemmadiagramchase}
    We have surjective homomorphisms $\Gamma_{M(n)} \twoheadrightarrow \Gamma_{\rho_{n^\prime}C}$
    and this induces  a surjection $\mathcal{U}_{M(n)} \twoheadrightarrow \mathcal{U}_{\rho_{n^\prime}C}$ for $0 \leq n^\prime \leq n$.
\end{Lemma}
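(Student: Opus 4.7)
The plan is to extract the surjection $\Gamma_{M(n)} \twoheadrightarrow \Gamma_{\rho_{n^\prime}C}$ from Tannakian duality and then derive the induced surjection on unipotent kernels by a diagram chase exploiting the fact that both groups project onto the same $\Gamma_C$.

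First I would note that $\rho_{n^\prime}C$ is a sub-pre-$t$-motive of $\rho_n C$ for $0\le n^\prime\le n$ (recorded in the paragraph just before Proposition \ref{ExamplerhoCsurj}), and $\rho_n C$ is a direct summand of $M(n)$. Hence $\rho_{n^\prime}C$ belongs to the Tannakian subcategory $\langle M(n)\rangle$, and Tannakian duality produces a faithfully flat homomorphism $f_{n^\prime}:\Gamma_{M(n)}\twoheadrightarrow \Gamma_{\rho_{n^\prime}C}$ of algebraic groups over $\mathbb{F}_q(t)$, which is the first half of the assertion.

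Next I would verify that $f_{n^\prime}$ is compatible with the canonical projections to $\Gamma_C$. The Carlitz motive $C=\rho_0 C$ is a sub-pre-$t$-motive of $\rho_{n^\prime}C$, giving the projection $\varpi_{n^\prime}:\Gamma_{\rho_{n^\prime}C}\twoheadrightarrow \Gamma_C$ described by Proposition \ref{ExamplerhoCsurj}, and $C$ is also a sub-object of $M(n)$, yielding $\pi_n:\Gamma_{M(n)}\twoheadrightarrow \Gamma_C$ whose kernel is $\mathcal{U}_{M(n)}$ by definition. Iterating Lemma \ref{LemmaTannakianProj} along the chain $C\subset \rho_{n^\prime}C\subset M(n)$ yields the identity $\pi_n=\varpi_{n^\prime}\circ f_{n^\prime}$.

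With this commutative square at hand, the restriction $f_{n^\prime}|_{\mathcal{U}_{M(n)}}$ automatically lands in $\mathcal{U}_{\rho_{n^\prime}C}$, and surjectivity onto it follows by a standard diagram chase: for any $\mathbb{F}_q(t)$-algebra $R$ and any $u\in \mathcal{U}_{\rho_{n^\prime}C}(R)$, faithful flatness of $f_{n^\prime}$ furnishes a faithfully flat extension $R\to R^\prime$ together with a lift $\tilde u\in \Gamma_{M(n)}(R^\prime)$ of $u_{R^\prime}$, and one computes
\[
\pi_n(\tilde u)=\varpi_{n^\prime}(f_{n^\prime}(\tilde u))=\varpi_{n^\prime}(u_{R^\prime})=1,
\]
so $\tilde u\in \mathcal{U}_{M(n)}(R^\prime)$. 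I expect the main, essentially bookkeeping, obstacle to be the verification of the commutative square $\pi_n=\varpi_{n^\prime}\circ f_{n^\prime}$; once that is in place, everything else is a formal consequence of faithful flatness and the naturality of the Tannakian projections.
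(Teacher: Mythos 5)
Your proposal is correct and follows essentially the same route as the paper: Tannakian duality gives the faithfully flat map $\Gamma_{M(n)}\twoheadrightarrow\Gamma_{\rho_{n^\prime}C}$, and the surjection on the unipotent kernels is obtained by a diagram chase through the commutative square over $\Gamma_C$. The only cosmetic difference is that you phrase the chase functorially in $R$-points with faithfully flat descent, whereas the paper works directly with $\overline{\mathbb{F}_q(t)}$-valued points (on which faithful flatness already gives surjectivity); you are also somewhat more explicit than the paper in justifying the commutativity $\pi_n=\varpi_{n^\prime}\circ f_{n^\prime}$ via Lemma \ref{LemmaTannakianProj}.
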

\begin{proof}
    As $\rho_{n^\prime} C$ is a sub-pre-$t$-motive of $\Gamma_{M(n)}$, we have a surjective homomorphism $\Gamma_{M(n)} \twoheadrightarrow \Gamma_{\rho_{n^\prime} C}$ by Tannakian duality. We can prove the surjectivity of the homomorphism $\mathcal{U}_{M(n)} \twoheadrightarrow \mathcal{U}_{\rho_{n^\prime}C}$ by the following commutative diagram:
    \begin{equation}\label{Diagramchase}
                \begin{tikzpicture}[auto] 
                   \node (22) at (0, 2) {$\mathcal{U}_{M(n)}$}; \node (42) at (2, 2) {$\Gamma_{ M(n)}$}; \node (62) at (4, 2) {$\Gamma_C$};
                    \node (20) at (0, 0) {$\mathcal{U}_{\rho_{n^\prime}C}$}; \node (40) at (2, 0) {$\Gamma_{\rho_{n^\prime}C}$}; \node (60) at (4, 0) {$\Gamma_C$};
    
                    \draw[-] (62) to node[sloped] {$=$}(60);
                    \draw[->>] (42) to node {}(40);
                    \draw[->] (22) to node {}(20);

                    \draw[{Hooks[right]}->] (22) to node {}(42);
                    \draw[{Hooks[right]}->] (20) to node {}(40);
    
                    \draw[->>] (42) to node {}(62);
                    \draw[->>] (40) to node {}(60);
                \end{tikzpicture}.
            \end{equation}
            If we take an element $\xi$ in $\mathcal{U}_{\rho_{n^\prime} C}(\overline{\mathbb{F}_q(t)})$, we have $\eta \in \Gamma_{M(n)}(\overline{\mathbb{F}_q(t)})$ which is mapped to $\xi \in \mathcal{U}_{\rho_{n^\prime} C}(\overline{\mathbb{F}_q(t)}) \subset \Gamma_{\rho_{n^\prime}C}(\overline{\mathbb{F}_q(t)})$. As horizontal lines of the diagram above are exact, $\xi$ goes to $1$ in $\Gamma_C(\overline{\mathbb{F}_q(t)}) \simeq \mathbb{G}_m(\overline{\mathbb{F}_q(t)})$ and so does $\eta$ by the commutativity of the diagram. Hence $\eta$ is an element of $\mathcal{U}_{M(n)}(\overline{\mathbb{F}_q(t)})$ and we obtain the surjectivity of $\mathcal{U}_{M(n)} \rightarrow \mathcal{U}_{\rho_{n^\prime} C}$.
\end{proof}
For $n \geq 1 $, the surjection $\Gamma_{\rho_n C} \twoheadrightarrow \Gamma_{\rho_{n-1}C}$ (see Proposition \ref{ExamplerhoCsurj} again) induces an surjection $\mathcal{U}_{\rho_n C}\twoheadrightarrow \mathcal{U}_{\rho_{n-1} C}$ and we define $\mathcal{V}_{\rho_n C}$ to be its kernel. We have
\begin{equation}\label{EqdescriptionVrhonC}
    \mathcal{V}_{\rho_n C}= \left\{ \begin{pmatrix}
   1 & 0 &\cdots&\cdots&0\\
   0& 1&0&\cdots&0\\
   \vdots& \ddots&\ddots&\ddots&\vdots \\
   0& 0&\cdots&1&0\\
   a_n&0 &\cdots&0&1
\end{pmatrix}
    \right\} \simeq \mathbb{G}_a. 
\end{equation}

Let us consider the homomorphism $G(n) \rightarrow \mathbb{G}_m$ given by
\begin{equation}
    \begin{pmatrix}
   a_0 & 0 &\cdots&\cdots&0\\
   a_1 & a_0&0&\cdots&0\\
   a_2 & a_1& a_0&\ddots&\vdots\\
   \vdots&\vdots& \ddots&\ddots&0 \\
   a_{n}& a_{n-1}&\cdots &a_1&a_0\\
\end{pmatrix}
        \otimes \bigoplus_{i=1}^r 
        \begin{pmatrix}
            a_0^{s_i}& 0\\ 
            a_0^{s_i} x_i& 1
        \end{pmatrix} \mapsto (a_0).
\end{equation}
We let 
\begin{equation}\label{EqexplicitU(n)}
    U(n):= \left \{  \begin{pmatrix}
   1 & 0 &\cdots&\cdots&0\\
   a_1 & 1&0&\cdots&0\\
   a_2 & a_1& 1&\ddots&\vdots\\
   \vdots&\vdots& \ddots&\ddots&0 \\
   a_{n}& a_{n-1}&\cdots &a_1&1\\
\end{pmatrix}
        \oplus \bigoplus_{i=1}^r 
        \begin{pmatrix}
            1& 0\\ 
             x_i& 1
        \end{pmatrix}\right\}
\end{equation}
be the kernel of this surjection. 

We assume $(q-1) \nmid s_i$ for $1 \leq i \leq r$ and $s_i/s_j \not \in p^{\mathbb{Z}}$ for each $1 \leq i <j \leq r$, and recall that polynomials $u_1,\,\dots,\,u_r \in \overline{K}[t]$ are chosen so that we have Inequalities \eqref{conditiononui_firststep1} and \eqref{conditiononui_firststep2} for each $1 \leq i \leq r$. Let us prove that the algebraic group $\Gamma_{M(n)}$ equals to $G(n)$ by induction on $n$. 
To begin with, we note that Chang and Yu~(\cite[\S\S  4.2]{Chang2007}) obtained the equality $\Gamma_{M(0)}=G(0)$. Indeed, as they proved
\begin{equation}
    \trdeg_{\overline{K}}\overline{K}(\tilde{\pi},\,\mathcal{L}_{(u_1),\,(s_1)}|_{t=\theta},\,\dots,\,\mathcal{L}_{(u_r),\,(s_r)}|_{t=\theta})=1+r,
\end{equation}
it follows from Theorem \ref{ThmPapanikolas} that $\dim \Gamma_{M(0)}=1+r=\dim G(0)$. Therefore, the closed immersion in Proposition \ref{firststeponesideinclusion} is an isomorphism for $n=0$, hence the equalities
\begin{align}
    \Gamma_{M(0)}&=G(0)=\left\{ (a_0)\oplus \begin{pmatrix}
        a_0^{s_1}&0\\
        a_0^{s_1}x_1&1
    \end{pmatrix} \oplus \cdots \oplus \begin{pmatrix}
        a_0^{s_r}&0\\
        a_0^{s_r}x_r&1
    \end{pmatrix}\,\middle | \,
    a_0 \neq 0
    \right\} \text{ and}\label{GM0}\\
    \mathcal{U}_{M(0)}&=U(0)=\left\{ (1)\oplus \begin{pmatrix}
        1&0\\
        x_1&1
    \end{pmatrix} \oplus \cdots \oplus \begin{pmatrix}
        1&0\\
        x_r&1
    \end{pmatrix}
    \right\} \simeq \mathbb{G}_a^r. \label{UM0}
\end{align}

Let us consider the case where $n \geq 1$ and write $V(n)$ for the kernel of the homomorphism $U(n) \twoheadrightarrow \mathcal{U}_{\rho_{n-1} C}$ given by 
\begin{equation}
    \begin{pmatrix}
   1 & 0 &\cdots&\cdots&0\\
   a_1 & 1&0&\cdots&0\\
   a_2 & a_1& 1&\ddots&\vdots\\
   \vdots&\vdots& \ddots&\ddots&0 \\
   a_{n}& a_{n-1}&\cdots &a_1&1\\
\end{pmatrix}
        \otimes \bigoplus_{i=1}^r 
        \begin{pmatrix}
            1& 0\\ 
             x_i& 1
        \end{pmatrix} \mapsto \begin{pmatrix}
   1 & 0 &\cdots&\cdots&0\\
   a_1 & 1&0&\cdots&0\\
   a_2 & a_1& 1&\ddots&\vdots\\
   \vdots&\vdots& \ddots&\ddots&0 \\
   a_{n-1}& a_{n-2}&\cdots &a_1&1\\
\end{pmatrix}
\end{equation}
The kernel $V(n)$ can be described as follows:
\begin{equation}
   V(n)= \left \{ \begin{pmatrix}
   1 & 0 &\cdots&\cdots&0\\
   0& 1&0&\cdots&0\\
   \vdots& \ddots&\ddots&\ddots&\vdots \\
   0& 0&\cdots&1&0\\
   a_n&0 &\cdots&0&1
\end{pmatrix}
\oplus\begin{pmatrix}
        1&0\\
        x_1&1
    \end{pmatrix} \oplus \cdots \oplus \begin{pmatrix}
        1&0\\
        x_r&1
    \end{pmatrix}
\right\}
\end{equation}
and hence we can obtain identification 
\begin{equation}\label{directproductV(n)}
    V(n)\simeq \mathcal{V}_{\rho_n C} \times U(0)
\end{equation}
in the evident way and also have
\begin{equation}\label{directproductV(n)2}
    V(n)\simeq \mathcal{V}_{\rho_n C} \times \mathcal{U}_{M(0)} 
\end{equation}
if we have Equation \eqref{UM0}, see also Equation \eqref{EqdescriptionVrhonC}.
We define $\mathcal{V}_{M(n)}$ to be the kernel of the surjection $\mathcal{U}_{M(n)} \twoheadrightarrow \mathcal{U}_{\rho_{n-1} C}$ given in Lemma \ref{Lemmadiagramchase}; then $\mathcal{V}_{M(n)}$ is regarded as an algebraic subgroup of $V(n)$, see the following diagram of exact sequences:
\begin{equation}    \begin{tikzpicture}[auto] 
                   \node (22) at (-1, 2) {$\mathcal{V}_{M(n)}$}; \node (42) at (3.5, 2) {$\mathcal{U}_{M(n)}$}; \node (62) at (8, 2) {$\mathcal{U}_{\rho_{n-1} C}$};
                    \node (20) at (-1, 0) {$V(n)$}; \node (40) at (3.5, 0) {$U(n)$}; \node (60) at (8, 0) {$\mathcal{U}_{\rho_{n-1} C}$};
    
                    \draw[-] (62) to node[sloped] {$=$}(60);
                    \draw[{Hooks[right]}->] (42) to node {}(40);
                    \draw[{Hooks[right]}->] (22) to node {}(20);

                    \draw[{Hooks[right]}->] (22) to node {}(42);
                    \draw[{Hooks[right]}->] (20) to node {}(40);
    
                    \draw[->>] (42) to node {}(62);
                    \draw[->>] (40) to node {}(60);

                \end{tikzpicture}.
            \end{equation}

In order to show that the inclusion $\Gamma_{M(n)}\hookrightarrow G(n)$ in Proposition \ref{firststeponesideinclusion} is an isomorphism, we will verify the equality $\mathcal{V}_{M(n)} = V(n)$ in the case where positive integers $s_1,\,\dots,\,s_r$ are chosen so that $(q-1) \nmid s_i$ for $1 \leq i \leq r$ and $s_i/s_j \not \in p^{\mathbb{Z}}$ for each $1 \leq i <j \leq r$.
One of our primary tools is the following lemma:
\begin{Lemma}\label{LemmaAction}
     Consider positive integers $s_1,\,\dots,\,s_r$ and algebraic group $V$ isomorphic to $\mathbb{G}_a^{r+1}$ equipped with the $\mathbb{G}_m$-action given by
     \begin{equation}
         a.(x_0,\,x_1,\,\dots,\,x_r):=(x_0,\,a^{s_1}x_1,\,\dots,\,a^{s_r}x_r)
     \end{equation}
     for any $a \in \overline{\mathbb{F}_q(t)}^\times$ and $x_0,\,x_1,\,\dots,\,x_r \in \overline{\mathbb{F}_q(t)}$.
     Let $W$ be an algebraic subgroup of $V$ such that the group $W(\overline{\mathbb{F}_q(t)})$ is closed under the $\mathbb{G}_m(\overline{\mathbb{F}_q(t)})$-action.
     Assume that the restriction of the $0$-th projection $V \twoheadrightarrow \mathbb{G}_a$ to the algebraic subgroup $W$ is sujective and also that $W$ is mapped onto $\mathbb{G}_a^r$ by the projection $(x_0,\,x_1,\,\dots,\,x_r) \mapsto (x_1,\,x_2,\,\dots,\,x_r)$.
     If the ratio $s_i/s_j$ is not an integer power of $p$ for each $1 \leq i <j \leq r$, then it follows that $W=V$.
    \end{Lemma}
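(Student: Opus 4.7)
The strategy is to classify the possible defining equations of $W$ inside $V \cong \mathbb{G}_a^{r+1}$ using the structure theory of closed subgroups of a vector group in characteristic $p$, combined with the $\mathbb{G}_m$-equivariance forced by the action. The hypothesis that $s_i/s_j \notin p^{\mathbb{Z}}$ will enter as a rigidity constraint on the weights of additive monomials $x_i^{p^j}$.

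First, after replacing $W$ by its identity component (which remains $\mathbb{G}_m$-stable, and still surjects onto $\mathbb{G}_a$ and $\mathbb{G}_a^r$ since both targets are connected), one may assume $W$ is smooth and connected. As a closed, smooth, connected subgroup of the vector group $\mathbb{G}_a^{r+1}$ over the perfect field $\overline{\mathbb{F}_q(t)}$, $W$ is cut out by additive polynomials, i.e., finite sums $f = \sum_{i=0}^{r}\sum_{j \geq 0} c_{i,j}\, x_i^{p^j}$. Because the ideal $I(W)$ is $\mathbb{G}_m$-stable, one can further decompose each such $f$ into its weight-homogeneous components; each component is still additive and still lies in $I(W)$. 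So $I(W)$ is generated by additive polynomials that are simultaneously $\mathbb{G}_m$-homogeneous.

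Now a weight analysis finishes the job. Assign weight $s_i$ to $x_i$ (with $s_0 := 0$), so that $x_i^{p^j}$ has weight $p^j s_i$. A homogeneous additive generator of weight $w$ is a sum of terms $c_{i,j} x_i^{p^j}$ with $p^j s_i = w$. If $w = 0$, only $i = 0$ can contribute, and then such a nontrivial relation is an equation in $x_0$ alone, contradicting surjectivity of the $0$-th projection $W \twoheadrightarrow \mathbb{G}_a$. If $w > 0$, then $i \geq 1$ for every surviving term; but the hypothesis $s_i/s_{i'} \notin p^{\mathbb{Z}}$ for $i \neq i'$ precludes two distinct indices $i, i' \geq 1$ from both contributing, since that would give $s_i/s_{i'} = p^{j'-j}$. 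So such a generator is a nontrivial additive equation in a single $x_i$, which contradicts the surjection $W \twoheadrightarrow \mathbb{G}_a^r$ onto $(x_1,\dots,x_r)$. Hence $I(W) = 0$ and $W = V$.

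The main obstacle is the invocation of the classification of smooth connected closed subgroups of $\mathbb{G}_a^n$ in characteristic $p$ as zero loci of additive polynomials, together with the reduction to the connected smooth case; once these are in place, the remaining argument is a clean weight-counting exercise whose pivot is exactly the hypothesis that no ratio $s_i/s_j$ is an integer power of $p$.
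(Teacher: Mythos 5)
Your proof is correct. It shares with the paper's argument the two essential inputs: Conrad's classification of smooth connected subgroups of a vector group as zero loci of additive polynomials, and the observation that the hypothesis $s_i/s_j\notin p^{\mathbb{Z}}$ forces the quantities $s_ip^{j}$ to be pairwise distinct across the indices $i\geq 1$. Where you diverge is in how the $\mathbb{G}_m$-equivariance is exploited. The paper first uses the surjection onto $\mathbb{G}_a^{r}$ to reduce to codimension at most one, writes $W$ as the zero locus of a single relation $P_0(x_0)+P_1(x_1)+\cdots+P_r(x_r)=0$, picks a point $(b,1,\dots,1)\in W(\overline{\mathbb{F}_q(t)})$, moves it by the torus to get the polynomial identity $P_0(b)+P_1(a^{s_1})+\cdots+P_r(a^{s_r})=0$ in $a$, and kills each $P_i$ ($i\geq1$) by comparing the degrees $s_ip^{m_i}$; surjectivity of the $0$-th projection then kills $P_0$. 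You instead work with the full ideal of $W$, use torus-stability to split its additive elements into weight-homogeneous pieces, and observe that each such piece is supported on a single coordinate (weight $0$ forces $x_0$ only; positive weight forces a single $x_i$ with $i\geq1$), which the two surjectivity hypotheses rule out. Your route buys a slightly cleaner argument: you avoid the codimension-one reduction and the choice of a special point, and the weight decomposition makes transparent exactly where the incommensurability hypothesis enters. The cost is that you need the small preliminary reductions you correctly flag (passing to the identity component and checking it remains $\mathbb{G}_m$-stable and still surjects onto the connected targets), which the paper sidesteps by quoting Conrad directly for the codimension-one subgroup. Both arguments are sound.
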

    \begin{proof}
        As we have a surjection $W \twoheadrightarrow \mathbb{G}_a^r$ by the assumption, the codimension of $W$ in $V$ is $0$ or $1$.
        We recall the result of Conrad~\cite[Corollary 1.8]{Conrad2015} on the structure of direct products of $\mathbb{G}_a$.
        His result gives us polynomials $P_0(X),\,P_1(X),\,\dots,\,P_r(X) \in \overline{\mathbb{F}_q(t)}[X]$ such that the algebraic subset $W(\overline{\mathbb{F}_q(t)})$ is given as follows:
            \begin{equation}
                \{ (x_0,x_1,\,\dots,\,x_r) \in V(\overline{\mathbb{F}_q(t)})
                \mid P_0(x_0)+P_1(x_1)+\cdots+P_r(x_r)=0\}.
            \end{equation}
        Moreover, we may assume that the polynomial $P_i(X)$ is an additive polynomial for $0 \leq i \leq r$,  that is, the polynomial $P_i(X)$ is of the form
            \begin{equation}
               P_i(X)=b_0X^{p^0}+b_1X^{p^1}+\cdots+b_{m_i}X^{p^{m_i}} 
            \end{equation}
            for some $b_0,\,\dots,\,b_{m_i}\in \overline{\mathbb{F}_q(t)}$. 

            As $W$ is mapped onto $\mathbb{G}_a^r$ by the projection $V \twoheadrightarrow \mathbb{G}_a^r$ given by $(x_0,\,x_1,\,\dots,\,x_r) $ $\mapsto (x_1,\,x_2,\,\dots,\,x_r)$, we can pick $b \in \overline{\mathbb{F}_q(t)}$ such that $(b,\,1,\,\dots,\,1) \in W (\overline{\mathbb{F}_q(t)})$. 
            For any element $a \in \overline{\mathbb{F}_q(t)}$, we have $$a.(b,\,1,\,\dots,\,1) =(b,\,a^{s_1},\,a^{s_2},\,\dots,\,a^{s_r})\in W (\overline{\mathbb{F}_q(t)})$$ as $W (\overline{\mathbb{F}_q(t)})$ was assumed to be closed under the $\mathbb{G}_m(\overline{\mathbb{F}_q(t)})$-action and hence it follows that
            \begin{equation}\label{polynomina}
                P_0(b)+P_1(a^{s_1})+\cdots+P_r(a^{s_r})=0.
            \end{equation}
            Since we arbitrarily chose $a$ from the infinite set $\overline{\mathbb{F}_q(t)}^\times$, the left hand side of Equation \eqref{polynomina} must be a trivial as a polynomial in $a$.
            For $1 \leq i \leq r$, we have
            \begin{equation}
                \deg_a P_i(a^{s_i})=s_ip^{m_i}
            \end{equation}
            for some $m_i \geq 0$ if the polynomial $P_i(X)$ in $X$ is non-zero. As we assumed that $s_i/s_j \not \in p^{\mathbb{Z}}$ for any  $1 \leq i <j \leq r$, it follows that $$P_1(X)=P_2(X)=\cdots=P_r(X)=0,$$
            hence we have 
            \begin{equation}
                W(\overline{\mathbb{F}_q(t)})=\{(x_0,x_1,\,\dots,\,x_r) \in V(\overline{\mathbb{F}_q(t)})
                \mid P_0(x_0)=0\}.
            \end{equation}
            As we have assumed that $W$ is mapped onto $\mathbb{G}_a$ by the $0$-th projection, we also have $P_0(X)=0$. Therefore, we can conclude that $W=V$.
    \end{proof}

Recall that we have Equation \eqref{UM0} and the direct product decomposition $V(n)\simeq \mathcal{V}_{\rho_n C} \times \mathcal{U}_{M(0)}\simeq \mathbb{G}_a \times \mathbb{G}_a^r$ if we assume that positive integers $s_1,\,\dots,\,s_r$ are chosen so that $(q-1) \nmid s_i$ for $1 \leq i \leq r$ and $s_i/s_j \not \in p^{\mathbb{Z}}$ for each $1 \leq i <j \leq r$.
We will confirm that the inclusion $\mathcal{V}_{M(n)} \subset V(n)$ satisfies the conditions required in Lemma \ref{LemmaAction} in order to obtain the equality $\mathcal{V}_{M(n)}=V(n)$.
To begin with, we check that the restriction to $\mathcal{V}_{M(n)}$ of the projection $ \mathcal{V}_{\rho_n C} \times \mathcal{U}_{M(0)} \twoheadrightarrow \mathcal{V}_{\rho_n C}$ is surjective. 
This surjectivity follows from Lemma \ref{Lemmafirstproj} given below, which can be formulated and be verified without those assumptions on  positive integers $s_1,\,\dots,\,s_r$.

Recalling that we have explicitly described the structure of algebraic group $\mathcal{U}_{\rho_n C}$ in Equation \eqref{EqexplicitUrhonC} using a result of Maurischat~(\cite{Maurischat2018}), we can consider the homomorphism $\varphi:U(n) \twoheadrightarrow \mathcal{U}_{\rho_n C}$ given by 
\begin{equation}
    \begin{pmatrix}
   1 & 0 &\cdots&\cdots&0\\
   a_1 & 1&0&\cdots&0\\
   a_2 & a_1& 1&\ddots&\vdots\\
   \vdots&\vdots& \ddots&\ddots&0 \\
   a_{n}& a_{n-1}&\cdots &a_1&1\\
\end{pmatrix}
        \otimes \bigoplus_{i=1}^r 
        \begin{pmatrix}
            1& 0\\ 
             x_i& 1
        \end{pmatrix} \mapsto \begin{pmatrix}
   1 & 0 &\cdots&\cdots&0\\
   a_1 & 1&0&\cdots&0\\
   a_2 & a_1& 1&\ddots&\vdots\\
   \vdots&\vdots& \ddots&\ddots&0 \\
   a_{n}& a_{n-1}&\cdots &a_1&1\\
\end{pmatrix}.
\end{equation}
Then we also have $\varphi|_{V(n)}:V(n)\twoheadrightarrow \mathcal{V}_{\rho_n C}$ by the definition of algebraic groups $V(n)$ and Equation \eqref{EqdescriptionVrhonC}.
We note that the homomorphism $\varphi|_{V(n)}:V(n)\twoheadrightarrow \mathcal{V}_{\rho_n C}$ coincides with the projection $ \mathcal{V}_{\rho_n C} \times \mathcal{U}_{M(0)} \twoheadrightarrow \mathcal{V}_{\rho_n C}$ if we have Equation \eqref{directproductV(n)}.

\begin{Lemma}\label{Lemmafirstproj}
The algebraic subgroup $\mathcal{V}_{M(n)} \subset V(n) $ is mapped onto $\mathcal{V}_{\rho_n C}$ by the homomorphism $\varphi|_{V(n)}:V(n)\twoheadrightarrow \mathcal{V}_{\rho_n C}$.
\end{Lemma}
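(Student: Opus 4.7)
The plan is to establish surjectivity by a short diagram chase, after first identifying the restriction $\varphi|_{\mathcal{V}_{M(n)}}$ with the Tannakian-induced morphism $\mathcal{V}_{M(n)} \to \mathcal{V}_{\rho_n C}$. The crucial preliminary observation is that, under the closed immersion $\mathcal{U}_{M(n)} \hookrightarrow U(n)$ coming from Proposition \ref{firststeponesideinclusion}, the first-block projection $\varphi$ restricted to $\mathcal{U}_{M(n)}$ agrees with the Tannakian surjection $\mathcal{U}_{M(n)} \twoheadrightarrow \mathcal{U}_{\rho_n C}$ supplied by Lemma \ref{Lemmadiagramchase} (with $n^\prime = n$). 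This is a direct consequence of Lemma \ref{LemmaTannakianProj} applied to the decomposition $M(n) = \rho_n C \oplus \bigoplus_{i=1}^r M[i]$: the resulting projection $\Gamma_{M(n)} \twoheadrightarrow \Gamma_{\rho_n C}$ is realised in the matrix description of $G(n)$ as the first-block projection. Restricting once more to $\mathcal{V}_{M(n)} \subset V(n)$ identifies $\varphi|_{\mathcal{V}_{M(n)}}$ with the map on unipotent kernels induced by $\rho_n C \hookrightarrow M(n)$.

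With that identification in hand, the next step is to form the commutative diagram of algebraic groups over $\overline{\mathbb{F}_q(t)}$ with exact rows
\begin{equation*}
    \begin{array}{ccccccccc}
        1 & \longrightarrow & \mathcal{V}_{M(n)} & \longrightarrow & \mathcal{U}_{M(n)} & \longrightarrow & \mathcal{U}_{\rho_{n-1} C} & \longrightarrow & 1 \\
        & & \downarrow & & \downarrow & & \parallel & & \\
        1 & \longrightarrow & \mathcal{V}_{\rho_n C} & \longrightarrow & \mathcal{U}_{\rho_n C} & \longrightarrow & \mathcal{U}_{\rho_{n-1} C} & \longrightarrow & 1,
    \end{array}
\end{equation*}
in which the left and middle vertical arrows are induced from $\rho_n C \hookrightarrow M(n)$ (equivalently, are $\varphi$ precomposed with the inclusion $\mathcal{U}_{M(n)} \subset U(n)$) and the right vertical arrow is the identity. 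Exactness of the top row is the definition of $\mathcal{V}_{M(n)}$ together with the surjectivity furnished by Lemma \ref{Lemmadiagramchase} (with $n^\prime = n-1$); exactness of the bottom row is the construction of $\mathcal{V}_{\rho_n C}$; commutativity of the right-hand square follows by Tannakian functoriality along the chain $\rho_{n-1} C \hookrightarrow \rho_n C \hookrightarrow M(n)$.

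Finally, I carry out the diagram chase on $\overline{\mathbb{F}_q(t)}$-valued points. Given $v \in \mathcal{V}_{\rho_n C}(\overline{\mathbb{F}_q(t)})$, regard it as an element of $\mathcal{U}_{\rho_n C}(\overline{\mathbb{F}_q(t)})$ and lift it to some $u \in \mathcal{U}_{M(n)}(\overline{\mathbb{F}_q(t)})$ using the surjectivity of the middle vertical arrow. By commutativity, the image of $u$ in $\mathcal{U}_{\rho_{n-1} C}(\overline{\mathbb{F}_q(t)})$ equals the image of $v$, which is trivial; hence $u \in \mathcal{V}_{M(n)}(\overline{\mathbb{F}_q(t)})$, and its image under $\varphi|_{\mathcal{V}_{M(n)}}$ is $v$, proving surjectivity. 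I do not expect a genuine obstacle here: the only point requiring care is the identification in the preliminary observation, which merely unwinds the explicit block-triangular structure of $G(n)$ together with Lemma \ref{LemmaTannakianProj}.
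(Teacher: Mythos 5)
Your proposal is correct and follows essentially the same route as the paper: identify $\varphi\circ\iota$ with the Tannakian surjection of Lemma \ref{Lemmadiagramchase} via Lemma \ref{LemmaTannakianProj}, set up the commutative ladder of exact sequences ending in $\mathcal{U}_{\rho_{n-1}C}$, and conclude by a diagram chase. The paper merely inserts the intermediate row $V(n)\hookrightarrow U(n)\twoheadrightarrow \mathcal{U}_{\rho_{n-1}C}$ into its diagram, which does not change the argument.
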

\begin{proof}
    We have obtained the closed immersion of $\Gamma_{M(n)}$ into $ G(n)$ in Proposition \ref{firststeponesideinclusion} and we write $\iota$ for the restriction $\mathcal{U}_{M(n)} \hookrightarrow U(n)$.
   Using Lemma \ref{LemmaTannakianProj}, we can show that the composition $\varphi\circ \iota:\mathcal{U}_{M(n)} \rightarrow \mathcal{U}_{\rho_n C}$ coincides with the surjection given in Lemma \ref{Lemmadiagramchase}. 
    By considering the following commutative diagram of exact sequences, we can show that the restriction $\varphi|_{\mathcal{V}_{M(n)}}$ to the algebraic subgroup $\mathcal{V}_{M(n)}$ is surjective onto $\mathcal{V}_{\rho_n C}$:
    \begin{equation}
        \begin{tikzpicture}[auto] 
                   \node (22) at (0, 1.5) {$\mathcal{V}_{M(n)}$}; \node (42) at (4, 1.5) {$\mathcal{U}_{M(n)}$}; \node (62) at (8, 1.5) {$\mathcal{U}_{\rho_{n-1} C}$};
                    \node (20) at (0, 0) {$V(n)$}; \node (40) at (4, 0) {$U(n)$}; \node (60) at (8, 0) {$\mathcal{U}_{\rho_{n-1} C}$};
                    \node (2-2) at (0, -1.5) {$\mathcal{V}_{\rho_n C}$}; \node (4-2) at (4, -1.5) {$\mathcal{U}_{\rho_n C}$}; \node (6-2) at (8, -1.5) {$\mathcal{U}_{\rho_{n-1} C}$};
                    ;

                    \draw[-] (62) to node[sloped] {$=$}(60);
                    \draw[-] (60) to node[sloped] {$=$}(6-2);
                    
                    \draw[->>] (40) to node {$\varphi$}(4-2);

                    \draw[{Hooks[right]}->] (42) to node {}(40);
                    \draw[{Hooks[right]}->] (22) to node {}(20);

                    \draw[{Hooks[right]}->] (22) to node {}(42);
                    \draw[{Hooks[right]}->] (20) to node {}(40);
                    \draw[{Hooks[right]}->] (2-2) to node {}(4-2);
    
                    \draw[->>] (42) to node {}(62);
                    \draw[->>] (40) to node {}(60);
                    \draw[->>] (4-2) to node {}(6-2);

                    \draw[->>] (20) to node {$\varphi \mid_{V(n)}$}(2-2);

                    \draw[white,line width=6pt] (42) to [bend right=50] node { } (4-2);
                    \draw[->] (42) to [bend right=50] node {} (4-2);
                    \draw[-] (4-2) to [bend left=50] node {$\begin{matrix}\\ \\ \varphi\circ \iota\end{matrix}$} (42);
                \end{tikzpicture}.
    \end{equation} 
\end{proof}

We assume that positive integers $s_1,\,\dots,\,s_r$ are chosen so that $(q-1) \nmid s_i$ for $1 \leq i \leq r$ and $s_i/s_j \not \in p^{\mathbb{Z}}$ for each $1 \leq i <j \leq r$.
By the assumtion on positive integers $s_1,\,\dots,\,s_r$, we have direct product decomposition of the algebraic group $V(n)$ as in Equation \eqref{directproductV(n)2}.
We show also that the algebraic subgroup $\mathcal{V}_{M(n)} \subset V(n)\simeq \mathcal{V}_{\rho_n C} \times \mathcal{U}_{M(0)}$ is mapped onto $ \mathcal{U}_{M(0)}$ by the projection. In order to do that, we need the induction hypothesis, which asserts that the closed immersion $\Gamma_{M(n-1)} \hookrightarrow G(n-1)$ in Proposition \ref{firststeponesideinclusion} is an isomorphism. 
\begin{Lemma}\label{lemmareminderproj}
    Assume that $(q-1) \nmid s_i$ for $1 \leq i \leq r$ and $s_i/s_j \not \in p^{\mathbb{Z}}$ for each $1 \leq i <j \leq r$.
    If the algebraic group $\Gamma_{M(n-1)}$ equals to $G(n-1)$, then the restriction to the subgroup $\mathcal{V}_{M(n)}$ of the projection $V(n) \simeq \mathcal{V}_{\rho_n C} \times \mathcal{U}_{M(0)}\twoheadrightarrow \mathcal{U}_{M(0)} $ is surjective for $n \geq 1$.
\end{Lemma}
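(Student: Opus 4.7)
The plan is to exploit the induction hypothesis $\Gamma_{M(n-1)} = G(n-1)$ (equivalently $\mathcal{U}_{M(n-1)} = U(n-1)$) together with the surjection on unipotent radicals coming from the inclusion of pre-$t$-motives $M(n-1) \hookrightarrow M(n)$. Since $\rho_{n-1}C$ is a sub-pre-$t$-motive of $\rho_n C$ (see the discussion before Proposition \ref{ExamplerhoCsurj}), the direct sum $M(n-1)$ is a sub-pre-$t$-motive of $M(n)$, so Tannakian duality yields a faithfully flat morphism $\Gamma_{M(n)} \twoheadrightarrow \Gamma_{M(n-1)}$. A diagram chase identical to the one in the proof of Lemma \ref{Lemmadiagramchase} then produces a surjection $p\colon \mathcal{U}_{M(n)} \twoheadrightarrow \mathcal{U}_{M(n-1)}$.

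First I will identify $p$ explicitly. Using Lemma \ref{LemmaTannakianProj} applied to $M(n) = M(n-1) \oplus (\text{extra piece of }\rho_n C)$, and the identification of $\Gamma_{M(n)}$ inside $G(n)$ from Proposition \ref{firststeponesideinclusion}, the map $p$ is the restriction to $\mathcal{U}_{M(n)}$ of the natural projection $U(n) \twoheadrightarrow U(n-1)$ that forgets the last row of the lower-triangular block while leaving the $2r$ depth-one blocks intact. Similarly, the surjection $q\colon \mathcal{U}_{M(n-1)} \twoheadrightarrow \mathcal{U}_{\rho_{n-1}C}$ coming from $\rho_{n-1}C \hookrightarrow M(n-1)$ is the restriction to $\mathcal{U}_{M(n-1)} \subset U(n-1)$ of the obvious projection extracting the lower-triangular block. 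By the induction hypothesis these inclusions are equalities, and so $\ker q$ consists exactly of elements of $U(n-1)$ whose lower-triangular block is the identity; by Equation \eqref{UM0} (which applies thanks to the hypothesis on the $s_i$) this kernel coincides with the image of $\mathcal{U}_{M(0)} = U(0)$ under its canonical embedding into $U(n-1)$.

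Next, the surjection $p_1\colon \mathcal{U}_{M(n)} \twoheadrightarrow \mathcal{U}_{\rho_{n-1}C}$ from Lemma \ref{Lemmadiagramchase} factors as $p_1 = q \circ p$. Since $p$ is surjective, one obtains
\begin{equation}
    p(\mathcal{V}_{M(n)}) \;=\; p(\ker p_1) \;=\; \ker q \;=\; \mathcal{U}_{M(0)}.
\end{equation}
Finally, a direct inspection of the matrix forms shows that the projection $V(n) \simeq \mathcal{V}_{\rho_n C} \times \mathcal{U}_{M(0)} \twoheadrightarrow \mathcal{U}_{M(0)}$, precomposed with the inclusion $\mathcal{V}_{M(n)} \hookrightarrow V(n)$, agrees with the restriction $p|_{\mathcal{V}_{M(n)}}$ followed by the canonical identification $\ker q \simeq \mathcal{U}_{M(0)}$: both maps simply read off the $2r$ depth-one blocks. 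Hence the surjectivity of $p|_{\mathcal{V}_{M(n)}}$ already established is exactly the surjectivity claimed in the statement. The one point requiring genuine care is the identification of $p$ with the matrix-level projection (and the compatibility of the two projections onto $\mathcal{U}_{M(0)}$), but this reduces to unpacking the rigid analytic trivializations $\rho_n \Psi(n)$ side by side and applying Lemma \ref{LemmaTannakianProj}, so no new idea is needed beyond what has already appeared in Proposition \ref{ExamplerhoCsurj}.
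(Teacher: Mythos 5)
Your proposal is correct and follows essentially the same route as the paper: both arguments use the induction hypothesis to identify $\mathcal{U}_{M(n-1)}$ with $U(n-1)$, invoke Lemma \ref{LemmaTannakianProj} to match the Tannakian surjection $\mathcal{U}_{M(n)}\twoheadrightarrow\mathcal{U}_{M(n-1)}$ with the explicit matrix-level projection, and then deduce the surjectivity onto $\mathcal{U}_{M(0)}=\ker q$; your kernel-image computation $p(\ker(q\circ p))=\ker q$ is just a compact restatement of the paper's diagram chase.
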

\begin{proof}
    As we have assumed that $\Gamma_{M(n-1)}=G(n-1)$, 
    we can consider the homomorphism $\overline{\psi}:G(n)\twoheadrightarrow \Gamma_{M(n-1)}$ given by
    \begin{align}
        &\begin{pmatrix}
   a_0 & 0 &\cdots&\cdots&0\\
   a_1 & a_0&0&\cdots&0\\
   a_2 & a_1& a_0&\ddots&\vdots\\
   \vdots&\vdots& \ddots&\ddots&0 \\
   a_{n}& a_{n-1}&\cdots &a_1&a_0\\
\end{pmatrix}
        \oplus \begin{pmatrix}
        a_0^{s_1}&0\\
        a_0^{s_1}x_1&1
    \end{pmatrix} \oplus \cdots \oplus \begin{pmatrix}
        a_0^{s_r}&0\\
        a_0^{s_r}x_r&1
    \end{pmatrix}\\ 
        &\quad \mapsto \begin{pmatrix}
   a_0 & 0 &\cdots&\cdots&0\\
   a_1 & a_0&0&\cdots&0\\
   a_2 & a_1& a_0&\ddots&\vdots\\
   \vdots&\vdots& \ddots&\ddots&0 \\
   a_{n-1}& a_{n-2}&\cdots &a_1&a_0\\
\end{pmatrix}\oplus \begin{pmatrix}
        a_0^{s_1}&0\\
        a_0^{s_1}x_1&1
    \end{pmatrix} \oplus \cdots \oplus \begin{pmatrix}
        a_0^{s_r}&0\\
        a_0^{s_r}x_r&1
    \end{pmatrix}
    \end{align}
    and its restriction $\psi:U(n)\twoheadrightarrow \mathcal{U}_{M(n-1)}$ since the equality $\mathcal{U}_{M(n-1)}=U(n-1)$ follows from $\Gamma_{M(n-1)}=G(n-1)$.
    We write $\overline{\iota}$ for the closed immersion of $\Gamma_{M(n)} $ into $ G(n)$ given in Proposition \ref{firststeponesideinclusion} and write $\iota:\mathcal{U}_{M(n)} \hookrightarrow U(n)$ for the restriction.
    Let us consider the following commutative diagram:
    \begin{equation}
        \begin{tikzpicture}[auto] 
                   \node (22) at (0, 1.5) {$\mathcal{U}_{M(n)}$}; \node (42) at (4, 1.5) {$\Gamma_{ M(n)}$}; \node (62) at (8, 1.5) {$\Gamma_C$};
                    \node (20) at (0, 0) {$U(n)$}; \node (40) at (4, 0) {$G(n)$}; \node (60) at (8, 0) {$\mathbb{G}_m$};
                    \node (2-2) at (0, -1.5) {$\mathcal{U}_{M(n-1)}$}; \node (4-2) at (4, -1.5) {$\Gamma_{M(n-1)}$}; \node (6-2) at (8, -1.5) {$\Gamma_C$};
                   
                    \draw[-] (62) to node[sloped] {$=$}(60);
                    \draw[-] (60) to node[sloped] {$=$}(6-2);
                    
                    \draw[{Hooks[right]}->] (42) to node {$\overline{\iota}$}(40);
                    \draw[{Hooks[right]}->] (22) to node {$\iota$}(20);

                    \draw[{Hooks[right]}->] (22) to node {}(42);
                    \draw[{Hooks[right]}->] (20) to node {}(40);
                    \draw[{Hooks[right]}->] (2-2) to node {}(4-2);
                    
                    \draw[->>] (42) to node {}(62);
                    \draw[->>] (40) to node {}(60);
                    \draw[->>] (4-2) to node {}(6-2);
                    
                    \draw[->>] (20) to node {$\psi$}(2-2);
                    \draw[->>] (40) to node {$\overline{\psi}$}(4-2);

                    \draw[white,line width=6pt] (42) to [bend right=50] node { } (4-2);
                    \draw[->] (42) to [bend right=50] node {} (4-2);
                    \draw[-] (4-2) to [bend left=50] node {$\begin{matrix}\\ \\ \\ \overline{\psi}\circ \overline{\iota} \end{matrix}$} (42);
                    \draw[->] (22) to [bend right=50] node {} (2-2);
                    \draw[-] (2-2) to [bend left=50] node {$\psi\circ \iota$} (22);
                \end{tikzpicture}.
    \end{equation}
    Lemma \ref{LemmaTannakianProj} shows that the composition $\overline{\psi}\circ \overline{\iota}:\Gamma_{M(n)} \rightarrow \Gamma_{M(n-1)}$ in the diagram coincides with the surjection given by Tannakian duality; note that $M(n-1)$ is a sub-pre-$t$-motive of $M(n)$. Hence it follows from the commutativity of the above diagram that the composition $\psi\circ \iota$
    is the surjective onto $\mathcal{U}_{M(n-1)}$. 
    
    By assumptions, the algebraic groups $\mathcal{U}_{M(i)}$ are equal to algebraic groups $U(i)$ with the explicit description in Equation \eqref{EqexplicitU(n)} for $i=0$ and $i=n-1$. Thus the kernel of the surjection $\mathcal{U}_{M(n-1)} \rightarrow \mathcal{U}_{\rho_{n-1} C}$ given in Lemma \ref{Lemmadiagramchase} can be identified with $\mathcal{U}_{M(0)}$ in the evident manner, see Equation \eqref{EqexplicitU(n)} for the structure of the algebraic group $\mathcal{U}_{M(n-1)} \simeq U(n-1)$ and see Example \ref{Maurischatresult} for that of $\mathcal{U}_{\rho_{n-1} C}$.
    We consider the following commutative diagram, whose horizontal lines are exact:
    \begin{equation}
        \begin{tikzpicture}[auto] 
                   \node (22) at (0, 1.5) {$\mathcal{V}_{M(n)}$}; \node (42) at (4, 1.5) {$\mathcal{U}_{M(n)}$}; \node (62) at (8, 1.5) {$\mathcal{U}_{\rho_{n-1} C}$};
                    \node (20) at (0, 0) {$V(n)$}; \node (40) at (4, 0) {$U(n)$}; \node (60) at (8, 0) {$\mathcal{U}_{\rho_{n-1} C}$};
                    \node (2-2) at (0, -1.5) {$\mathcal{U}_{M(0)}$}; \node (4-2) at (4, -1.5) {$\mathcal{U}_{M(n-1)}$}; \node (6-2) at (8, -1.5) {$\mathcal{U}_{\rho_{n-1} C}$};
                    ;

                    \draw[-] (62) to node[sloped] {$=$}(60);
                    \draw[-] (60) to node[sloped] {$=$}(6-2);
                    
                    \draw[->>] (40) to node {$\psi$}(4-2);
                    
                    \draw[{Hooks[right]}->] (42) to node {$\iota$}(40);
                    \draw[{Hooks[right]}->] (22) to node {$\iota \mid_{\mathcal{V}_{M(n)}}$}(20);

                    \draw[{Hooks[right]}->] (22) to node {}(42);
                    \draw[{Hooks[right]}->] (20) to node {}(40);
                    \draw[{Hooks[right]}->] (2-2) to node {}(4-2);
                   
                    \draw[->>] (42) to node {}(62);
                    \draw[->>] (40) to node {}(60);
                    \draw[->>] (4-2) to node {}(6-2);
                    
                    \draw[->>] (20) to node {$\psi \mid_{V(n)}$}(2-2);
                    
                    \draw[white,line width=6pt] (42) to [bend right=50] node { } (4-2);
                    \draw[->] (42) to [bend right=50] node {} (4-2);
                    \draw[-] (4-2) to [bend left=50] node {$\begin{matrix}\\ \\ \psi\circ \iota\end{matrix}$} (42);
                \end{tikzpicture}.
    \end{equation}
    The composition $(\psi|_{V(n)}) \circ(\iota|_{\mathcal{V}_{M(n)}}) :\mathcal{V}_{M(n)} \rightarrow \mathcal{U}_{M(0)}$ is verified to be surjective by the diagram chasing.
    Let us recall the direct product decomposition in Equation \eqref{directproductV(n)2}.
    By the definition of $\psi$, we can show that the homomorphism $\psi|_{V(n)}$ coincides with the projection $\mathcal{V}_{\rho_n C} \times \mathcal{U}_{M(0)}\twoheadrightarrow \mathcal{U}_{M(0)} $, hence the desired surjectivity.
\end{proof}
Let us continue to assume that  $(q-1) \nmid s_i$ for $1 \leq i \leq r$ and $s_i/s_j \not \in p^{\mathbb{Z}}$ for each $1 \leq i <j \leq r$. Considering the conjugate action of $\Gamma_{M(n)}$ on $\mathcal{U}_{M(n)}$
given by the exact sequence
\begin{equation}\label{ss3exactseq}
    1 \hookrightarrow \mathcal{U}_{M(n)} \hookrightarrow \Gamma_{M(n)} \xrightarrow{\pi} \Gamma_C \twoheadrightarrow 1,
\end{equation}
we can show that the inclusion $\mathcal{V}_{M(n)} \subset V(n) \simeq \mathcal{V}_{\rho_n C} \times \mathcal{U}_{M(0)}$ satisfies the remaining assumption of Lemma \ref{LemmaAction} as follows:
\begin{Lemma}\label{LemmaActionfirststep}
    Assume that positive integers $s_1,\,\dots,\,s_r$ are chosen so that $(q-1) \nmid s_i$ for $1 \leq i \leq r$ and $s_i/s_j \not \in p^{\mathbb{Z}}$ for each $1 \leq i <j \leq r$, and identify the algebraic group $V(n)$ with the direct product $\mathcal{V}_{\rho_n C}\times \mathcal{U}_{M(0)}\simeq \mathbb{G}_a^{r+1}$ in the evident way (see Equation \eqref{directproductV(n)2}). 
    If the group $\mathbb{G}_m(\overline{\mathbb{F}_q(t)})$ acts on $$\big(V(n)\big) (\overline{\mathbb{F}_q(t)})\simeq \mathbb{G}_a^{r+1}(\overline{\mathbb{F}_q(t)})$$ by
    \begin{equation}
         a.(x_0,\,x_1,\,\dots,\,x_r):=(x_0,\,a^{s_1}x_1,\,\dots,\,a^{s_r}x_r)
     \end{equation}
     for any $a \in \overline{\mathbb{F}_q(t)}^\times$ and $x_1,\,\dots,\,x_r \in \overline{\mathbb{F}_q(t)}$, then the subgroup $\mathcal{V}_{M(n)}(\overline{\mathbb{F}_q(t)})$ of $\big(V(n)\big) (\overline{\mathbb{F}_q(t)})$ is closed under this action.
\end{Lemma}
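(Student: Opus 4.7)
The plan is to exploit the conjugation action of $\Gamma_{M(n)}$ on the normal subgroup $\mathcal{V}_{M(n)}$ arising from the exact sequence \eqref{ss3exactseq}. First I would verify that $\mathcal{V}_{M(n)}$ is indeed normal in $\Gamma_{M(n)}$: the subgroup $\mathcal{V}_{M(n)}$ equals the intersection $\mathcal{U}_{M(n)} \cap \ker(\Gamma_{M(n)} \twoheadrightarrow \Gamma_{\rho_{n-1}C})$ of two normal subgroups, where the second surjection is the one induced by Tannakian duality from the inclusion $\rho_{n-1}C \subset M(n)$. Consequently, conjugation in $\Gamma_{M(n)}$ preserves $\mathcal{V}_{M(n)}$.

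Next, for each $a\in \overline{\mathbb{F}_q(t)}^{\times}=\Gamma_C(\overline{\mathbb{F}_q(t)})$ I would introduce the concrete element
\[
g_a := (a \cdot I_{n+1}) \oplus \bigoplus_{i=1}^{r} \begin{pmatrix} a^{s_i} & 0 \\ 0 & 1 \end{pmatrix} \in G(n)(\overline{\mathbb{F}_q(t)}),
\]
which lies in $G(n)$ but need not lie in $\Gamma_{M(n)}$. Using the surjectivity of $\pi$, one picks a genuine lift $\gamma_a \in \Gamma_{M(n)}(\overline{\mathbb{F}_q(t)})$ of $a$; then $u := \gamma_a g_a^{-1}$ lies in $U(n)(\overline{\mathbb{F}_q(t)}) = \ker(G(n) \twoheadrightarrow \mathbb{G}_m)(\overline{\mathbb{F}_q(t)})$. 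The crucial structural observation is that $U(n)$ is commutative: its first block is the group of lower triangular Toeplitz matrices with $1$'s on the diagonal, which is commutative by Example \ref{Maurischatresult}, and the remaining $2\times 2$ blocks are each isomorphic to $\mathbb{G}_a$. Since conjugation by $g_a$ preserves the condition $a_0=1$ and hence preserves $U(n)$, for $v\in \mathcal{V}_{M(n)}\subset U(n)$ one obtains
\[
\gamma_a v \gamma_a^{-1} = u \cdot g_a v g_a^{-1} \cdot u^{-1} = g_a v g_a^{-1}
\]
by commutativity of $U(n)$. Combined with the normality of $\mathcal{V}_{M(n)}$ in $\Gamma_{M(n)}$, this forces $g_a v g_a^{-1} \in \mathcal{V}_{M(n)}$.

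Finally, an explicit block-matrix computation yields the precise form of the conjugation action. For $v = (x_0, x_1, \dots, x_r) \in V(n) \simeq \mathcal{V}_{\rho_n C} \times \mathcal{U}_{M(0)}$, conjugation of the first block by the scalar $a I_{n+1}$ is trivial, so the coordinate $x_0$ is preserved; for the $i$-th $2\times 2$ block one computes
\[
\begin{pmatrix} a^{s_i} & 0 \\ 0 & 1 \end{pmatrix} \begin{pmatrix} 1 & 0 \\ x_i & 1 \end{pmatrix} \begin{pmatrix} a^{-s_i} & 0 \\ 0 & 1 \end{pmatrix} = \begin{pmatrix} 1 & 0 \\ a^{-s_i} x_i & 1 \end{pmatrix}.
\]
Thus $g_a v g_a^{-1}$ corresponds to $(x_0, a^{-s_1} x_1, \dots, a^{-s_r} x_r)$, establishing closure of $\mathcal{V}_{M(n)}(\overline{\mathbb{F}_q(t)})$ under the $\mathbb{G}_m$-action $a.(x_0,\dots,x_r) = (x_0, a^{-s_1}x_1, \dots, a^{-s_r}x_r)$; closure under the action in the lemma then follows upon replacing $a$ by $a^{-1}$. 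The only delicate point in the argument is the reduction of conjugation by the unknown lift $\gamma_a$ to conjugation by the concrete diagonal $g_a$, and this rests essentially on the commutativity of $U(n)$ supplied by Example \ref{Maurischatresult}.
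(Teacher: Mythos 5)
Your proof is correct and follows essentially the same route as the paper: both arguments conjugate an element of $\mathcal{V}_{M(n)}$ by a lift of $a\in\Gamma_C$ under $\pi$, use normality of $\mathcal{V}_{M(n)}$ together with the same explicit $2\times 2$ block computation, and exploit commutativity of the unipotent part to see that the answer is independent of the choice of lift. The only cosmetic difference is that you normalize the lift modulo the commutative group $U(n)$ before computing, whereas the paper conjugates by a general lift $Q\oplus\bigoplus\left(\begin{smallmatrix}a^{s_i}&0\\ b_i&1\end{smallmatrix}\right)$ and checks directly that $Q$ and the $b_i$ drop out.
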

\begin{proof}   The exact sequence \eqref{ss3exactseq}
shows that the group $\Gamma_C(\overline{\mathbb{F}_q(t)})$ of $\overline{\mathbb{F}_q(t)}$-valued points acts on $\mathcal{U}_{M(n)}(\overline{\mathbb{F}_q(t)})$ as follows: for $a \in\Gamma_C(\overline{\mathbb{F}_q(t)})$ and $R \in \mathcal{U}_n(\overline{\mathbb{F}_q(t)})$, we define $a . R:=\widetilde{a}^{-1}R\widetilde{a}$ where $\widetilde{a}$ is an arbitrary element in the preimage $\pi^{-1}(a)$, here $\overline{\mathbb{F}_q(t)}$ is a fixed algebraic closure of $\mathbb{F}_q(t)$. 
Recall that the $t$-motivic Galois group $\Gamma_C$ is isomorphic to the multiplicative group $\mathbb{G}_m$. If we identify the group $\Gamma_C(\overline{\mathbb{F}_q(t)})$ with $\mathbb{G}_m(\overline{\mathbb{F}_q(t)})$ and take arbitrary $a \in  \mathbb{G}_m(\overline{\mathbb{F}_q(t)}) =\overline{\mathbb{F}_q(t)} \setminus \{0\}$, then the action is explicitly described as follows:
\begin{small}
\begin{align}
  &\quad\   a.\begin{pmatrix}
   1 & 0 &\cdots&\cdots&\cdots&0\\
   a_1 & 1&0&\cdots&\cdots&0\\
   a_2 & a_1& 1&0&\cdots&0\\
   \vdots&\ddots& \ddots&\ddots&&\vdots \\
   a_{n-1}& a_{n-2}&\cdots &a_1&1&0\\
   a_n&a_{n-1}&\cdots &\cdots&a_1&1
\end{pmatrix}\oplus \bigoplus_{1\leq i\leq r}\begin{pmatrix}
1 & 0\\
 x_{i}& 1
\end{pmatrix}\\
&= Q^{-1}\begin{pmatrix}
   1 & 0 &\cdots&\cdots&0\\
   a_1 & 1&0&\cdots&0\\
   \vdots& \vdots&\ddots&&\vdots \\
   a_{n-1}& a_{n-2}&\cdots&1&0\\
   a_n&a_{n-1} &\cdots&a_1&1
\end{pmatrix}Q\oplus \bigoplus_{1 \leq i\leq r} \left\{ \begin{pmatrix}
a^{s_i} & 0\\
 b_j& 1
\end{pmatrix}^{-1}\begin{pmatrix}
1 & 0\\
 x_{i}& 1
\end{pmatrix}\begin{pmatrix}
a^{s_i} & 0\\
 b_i& 1
\end{pmatrix}\right\} \\
&=\begin{pmatrix}
   1 & 0 &\cdots&\cdots&\cdots&0\\
   a_1 & 1&0&\cdots&\cdots&0\\
   a_2 & a_1& 1&0&\cdots&0\\
   \vdots&\ddots& \ddots&\ddots&&\vdots \\
   a_{n-1}& a_{n-2}&\cdots &a_1&1&0\\
   a_n&a_{n-1}&\cdots &\cdots&a_1&1
\end{pmatrix}\oplus \bigoplus_{1 \leq i\leq r}\begin{pmatrix}
1 & 0\\
 a^{s_i}x_{i}& 1
\end{pmatrix}. \label{ExplicitAction}
\end{align}
\end{small}
Here, the lower triangular matrix $Q \in \Gamma_{\rho_n C} (\overline{\mathbb{F}_q(t)})$ and elements $b_1,\,\dots,\,b_j \in \overline{\mathbb{F}_q(t)}$ are chosen so that 
\begin{equation}
    Q \oplus \begin{pmatrix}
a^{s_1} & 0\\
 b_1& 1
\end{pmatrix}\oplus \cdots \oplus  \begin{pmatrix}
a^{s_i} & 0\\
 b_i& 1
\end{pmatrix} \in \pi^{-1}(a).
\end{equation}
Note also that the exact sequence
\begin{equation}
    1 \hookrightarrow \mathcal{U}_{\rho_{n-1} C}\hookrightarrow \Gamma_{\rho_{n-1}C}\twoheadrightarrow \Gamma_C \twoheadrightarrow 1
\end{equation}
yields the $\Gamma_C(\overline{\mathbb{F}_q(t)})$-action on $U_{n-1}(\overline{\mathbb{F}_q(t)})$ by the similar way (the action is trivial in fact since the group $\Gamma_{\rho_{n-1}C}(\overline{\mathbb{F}_q(t)})$ is commutative, see Example \ref{Maurischatresult}).
As Diagram \eqref{Diagramchase} shows that the surjection $\mathcal{U}_{M(n)} \twoheadrightarrow \mathcal{U}_{\rho_{n-1} C}$ is $\Gamma_C(\overline{\mathbb{F}_q(t)})$-equivariant,
it follows that the subgroup $\mathcal{V}_{M(n)}(\overline{\mathbb{F}_q(t)})$ is closed under the action on $\mathcal{U}_n(\overline{\mathbb{F}_q(t)})$ of $\Gamma_C(\overline{\mathbb{F}_q(t)})$ and Equation \eqref{ExplicitAction} shows the desired result.
\end{proof}
Now we are ready to apply Lemma \ref{LemmaAction} and verify the equality $\mathcal{V}_{M(n)}=V(n)$.
We can obtain the following result on $t$-motivic Galois groups of pre-$t$-motive $M(n)$.
\begin{Proposition}\label{PropMainBfirststep}
    Let us take an index $(s_1,\,\dots,\,s_r)\in \mathbb{Z}_{>0}^r$. Assume that $(q-1) \nmid s_i$ for each $1 \leq i \leq r$ and also that the ratio $s_i/s_j$ is not an integer power of $p$ for each $i \neq j$.
    For given polynomials $u_1,\,\dots,\,u_r$ in $\overline{K}[t]$ such that \begin{equation}
    ||u_i||_\infty<|\theta|_\infty^{\frac{s_i q}{q-1}}
    \end{equation}
    and $\mathcal{L}_{(u_i),\,(s_i)}(t)|_{t=\theta}\neq 0$ hold for each $1 \leq i \leq r$,
    we have 
    \begin{equation}\label{firststepGroupeq}
        \Gamma_{\Psi(n)} = G(n) 
    \end{equation}
    for $n \geq0$.
\end{Proposition}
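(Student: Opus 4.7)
The plan is to prove the equality $\Gamma_{\Psi(n)} = \Gamma_{M(n)} = G(n)$ by induction on $n \geq 0$. The base case $n = 0$ is already recorded in the text (Equations \eqref{GM0}--\eqref{UM0}): the Chang--Yu algebraic independence result gives transcendence degree $1 + r = \dim G(0)$, so Theorem \ref{ThmPapanikolas} upgrades the closed immersion of Proposition \ref{firststeponesideinclusion} to an isomorphism.

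For the inductive step, I would assume $\Gamma_{M(n-1)} = G(n-1)$ (equivalently $\mathcal{U}_{M(n-1)} = U(n-1)$) and aim to prove $\Gamma_{M(n)} = G(n)$. Since the closed immersion $\Gamma_{M(n)} \hookrightarrow G(n)$ of Proposition \ref{firststeponesideinclusion} is compatible with the quotients onto $\Gamma_C \simeq \mathbb{G}_m$, it suffices to verify that the unipotent parts agree, i.e.\ $\mathcal{U}_{M(n)} = U(n)$. Using the compatible exact sequences
\begin{equation}
1 \to \mathcal{V}_{M(n)} \to \mathcal{U}_{M(n)} \to \mathcal{U}_{\rho_{n-1}C} \to 1
\quad\text{and}\quad
1 \to V(n) \to U(n) \to \mathcal{U}_{\rho_{n-1}C} \to 1,
\end{equation}
where surjectivity of the top row at the right comes from Lemma \ref{Lemmadiagramchase}, the problem reduces once more to showing $\mathcal{V}_{M(n)} = V(n)$: once the kernels agree, $\mathcal{U}_{M(n)}$ has the same dimension as the connected smooth group $U(n)$ into which it embeds, hence must equal it.

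The central step is then the application of Lemma \ref{LemmaAction} to the inclusion $\mathcal{V}_{M(n)} \subset V(n) \simeq \mathcal{V}_{\rho_n C} \times \mathcal{U}_{M(0)} \simeq \mathbb{G}_a^{r+1}$, equipped with the $\mathbb{G}_m$-action $a.(x_0, x_1, \dots, x_r) = (x_0, a^{s_1}x_1, \dots, a^{s_r}x_r)$ spelled out in Lemma \ref{LemmaActionfirststep}. Three inputs are needed. First, surjectivity of the projection $\mathcal{V}_{M(n)} \twoheadrightarrow \mathcal{V}_{\rho_n C}$ is supplied by Lemma \ref{Lemmafirstproj}; this requires no hypothesis on the $s_i$. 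Second, surjectivity of the projection $\mathcal{V}_{M(n)} \twoheadrightarrow \mathcal{U}_{M(0)}$ is the content of Lemma \ref{lemmareminderproj}, and this is where the induction hypothesis $\Gamma_{M(n-1)} = G(n-1)$ enters essentially. Third, $\mathbb{G}_m$-stability of $\mathcal{V}_{M(n)}(\overline{\mathbb{F}_q(t)})$ is provided by Lemma \ref{LemmaActionfirststep}, whose proof leverages the conjugation action of $\Gamma_C$ on $\mathcal{U}_{M(n)}$ induced by the Levi-type exact sequence. Together with the standing hypothesis $s_i/s_j \notin p^{\mathbb{Z}}$, which is the final input to Lemma \ref{LemmaAction}, these combine to force $\mathcal{V}_{M(n)} = V(n)$, closing the induction.

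The principal obstacle is Lemma \ref{lemmareminderproj}: this is the step that feeds the induction hypothesis into the structural analysis and ensures that $\mathcal{V}_{M(n)}$ is large enough in the $\mathcal{U}_{M(0)}$-direction. The remaining surjectivities and $\mathbb{G}_m$-equivariance are diagram-chases, while the arithmetic condition on the $s_i$ together with Conrad's classification of closed subgroups of $\mathbb{G}_a^{r+1}$ (internal to Lemma \ref{LemmaAction}) then mechanically upgrades these structural facts into the final equality $\Gamma_{\Psi(n)} = G(n)$.
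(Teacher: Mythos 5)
Your proposal is correct and follows essentially the same route as the paper: induction on $n$ with the Chang--Yu result as base case, reduction to $\mathcal{V}_{M(n)}=V(n)$ via the exact sequences, and the application of Lemma \ref{LemmaAction} fed by Lemmas \ref{Lemmafirstproj}, \ref{lemmareminderproj}, and \ref{LemmaActionfirststep}, followed by a dimension count and smoothness/connectedness of $G(n)$. No gaps; this matches the paper's proof of Proposition \ref{PropMainBfirststep}.
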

\begin{proof}
  We prove the proposition by the induction on $n$. We recall that Chang and Yu~\cite{Chang2007} proved that $\Gamma_{M(0)}=G(0)$. We consider the case $n \geq 1$.
  If we assume that $\Gamma_{M(n-1)}=G(n-1)$ holds, then Lemmas \ref{Lemmafirstproj}, \ref{lemmareminderproj}, and \ref{LemmaActionfirststep} enable us to apply Lemma \ref{LemmaAction} and to obtain $\mathcal{V}_{M(n)}=V(n) \simeq \mathbb{G}_a^{r+1}$. Now, the exact sequence 
  \begin{equation}
    1 \hookrightarrow \mathcal{V}_{M(n)}\hookrightarrow \mathcal{U}_{M(n) }\twoheadrightarrow \mathcal{U}_{\rho_{n-1}}\twoheadrightarrow 1
\end{equation}
shows that $\dim \mathcal{U}_{M(n)}=\dim \mathcal{V}_{M(n)}+ \dim \mathcal{U}_{\rho_{n-1}}=(r+1)+(n-1)=n+r$ and the exact sequence
\begin{equation}
    1 \hookrightarrow \mathcal{U}_{M(n)} \hookrightarrow \Gamma_{M(n)} \twoheadrightarrow \Gamma_C \twoheadrightarrow 1
\end{equation}
implies $\dim \Gamma_{M(n)}= \dim \mathcal{U}_{M(n)}  + \dim \Gamma_C=n+1+r$. Hence we have $\dim \Gamma_{M(n)} =\dim G(n)$ and it follows that the closed immersion in Proposition \ref{firststeponesideinclusion} is an isomorphism as $G(n)$ is smooth and connected.
\end{proof}
Equality \eqref{firststepGroupeq} of algebraic groups corresponds via Papanikolas' theory (Theorem \ref{ThmPapanikolas}) to the following algebraic independence. 
This might be seen as a partial result of Theorem \ref{MainBpartial}.
\begin{Corollary}
    Take an index $(s_1,\,\dots,\,s_r)\in \mathbb{Z}_{>0}^r$. We assume that $(q-1) \nmid s_i$ for each $1 \leq i \leq r$ and the ratio $s_i/s_j$ is not an integer power of $p$ for each $i \neq j$.
    Let us consider the Taylor expansion of $\Omega$ (see Example \ref{Omega}):
\begin{equation}
    \Omega=\sum \alpha_n (t-\theta)^n.
\end{equation}
    \begin{enumerate}
    \item
        For given polynomials $u_1,\,\dots,\,u_r$ in $\overline{K}[t]$ such that \begin{equation}
    ||u_i||_\infty<|\theta|_\infty^{\frac{s_i q}{q-1}}
    \end{equation}
    and $\mathcal{L}_{(u_i),\,(s_i)}(t)|_{t=\theta}\neq 0$ for each $1 \leq i \leq r$,
    we consider the Taylor expansions
        \begin{equation}
        \mathcal{L}_{(u_{i}),(s_{i})}(t)=\sum_{n=0}^{\infty} \alpha_{i,\,n}(t-\theta)^n
    \end{equation}
    of $t$-motivic CMPL for each $1 \leq i \leq r$.  Then the field 
    \begin{equation}
        \overline{K}( \alpha_{n^\prime},\, \alpha_{i,\,0} \mid 0\leq n^\prime  \leq n,  1 \leq i \leq r)
    \end{equation}
     has a transcendental degree $n+1+r$ over $\overline{K}$ for each $n\geq0$.
    \item 
        Considering the Taylor expansions
\begin{equation}
     \zeta_A^{\mathrm{AT}}\left((s_i)\right)=\sum \beta_{i,\,n}(t-\theta)^n
\end{equation}
of Anderson-Thakur series for $1 \leq i \leq r$, we have
    \begin{equation}
        \trdeg_{\overline{K}}\overline{K}( \alpha_{n^\prime} ,\, \beta_{i,\,0} \mid 0\leq n^\prime  \leq n,\,1 \leq i \leq r)=n+1+r.
    \end{equation}
    \end{enumerate}
\end{Corollary}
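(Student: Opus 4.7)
The strategy is to apply Papanikolas' theorem (Theorem \ref{ThmPapanikolas}) to the pre-$t$-motive $M(n)$ from Definition \ref{Defpretfirststep}, whose $t$-motivic Galois group was computed in Proposition \ref{PropMainBfirststep}, and then to read off the stated transcendence degree by identifying the evaluations $\Psi(n)_{ij}|_{t=\theta}$ with the Taylor coefficients in question via Proposition \ref{Uchino}.

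First I would verify that Theorem \ref{ThmPapanikolas} applies to the pair $(\Phi(n),\Psi(n))$: the matrix $\Phi(n)$ lies in $\operatorname{Mat}_{n+1+2r}(\overline{K}[t])$, and since it is lower block-triangular with diagonal entries $(t-\theta)$ (from the $\rho_n C$ block) and $(t-\theta)^{s_i},\,1$ (from each $M[i]$ block), its determinant is $(t-\theta)^{n+1+s_1+\cdots+s_r}$, of the required form $c(t-\theta)^m$. All entries of $\Psi(n)$ are entire, being polynomial combinations of the entire series $\Omega$, its hyperderivatives, and the $\mathcal{L}_{(u_i),(s_i)}$. Combined with Proposition \ref{PropMainBfirststep}, which gives $\dim \Gamma_{M(n)} = \dim G(n) = n+1+r$, Theorem \ref{ThmPapanikolas} yields
\begin{equation}
\trdeg_{\overline{K}}\overline{K}(\Psi(n)_{ij}|_{t=\theta}) = n+1+r.
\end{equation}

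Next I would identify which field these entries generate over $\overline{K}$. By Proposition \ref{Uchino}, the $\rho_n C$ block contributes $\partial^{(i)}\Omega|_{t=\theta} = \alpha_i$ for $0 \leq i \leq n$, while the $i$-th block $M[i]$ contributes $\Omega^{s_i}|_{t=\theta} = \alpha_0^{s_i}$ and $\Omega^{s_i}\mathcal{L}_{(u_i),(s_i)}|_{t=\theta} = \alpha_0^{s_i}\alpha_{i,0}$. Since $\Omega$ has no zero at $t=\theta$ (cf.\ Example \ref{Omega}), one has $\alpha_0 \neq 0$, and dividing recovers each $\alpha_{i,0}$; thus the generated field coincides with $\overline{K}(\alpha_{n'},\alpha_{i,0}\mid 0\le n'\le n,\,1\le i\le r)$, proving part (1). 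For part (2), I would specialize $u_i := H_{s_i-1}$ (Anderson-Thakur polynomials), whose $\infty$-adic bounds satisfy the hypothesis of part (1), and note that $\mathcal{L}_{(H_{s_i-1}),(s_i)}|_{t=\theta} = \Gamma_{s_i}\zeta_A(s_i)$ is nonzero by the transcendence of $\zeta_A(s_i)$; then $\beta_{i,n} = \alpha_{i,n}$ by Definition \ref{DefATseries}, and the statement follows. No substantial obstacle is anticipated, as the heavy lifting is already contained in Proposition \ref{PropMainBfirststep}; this corollary is essentially a translation via Papanikolas' theorem, with the only modest care being the verification that $\alpha_0\neq 0$ so that dividing to recover the $\alpha_{i,0}$ is legitimate.
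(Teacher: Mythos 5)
Your proposal is correct and follows essentially the same route as the paper: the paper's proof likewise combines Theorem \ref{ThmPapanikolas} with Proposition \ref{PropMainBfirststep} (giving $\dim\Gamma_{M(n)}=\dim G(n)=n+1+r$) and Proposition \ref{Uchino}, and then specializes $u_i=H_{s_i-1}$ for part (2). The only difference is that you spell out the routine verifications (the form of $\det\Phi(n)$, the identification of the period field, and the use of $\alpha_0\neq 0$ to recover $\alpha_{i,0}$ from $\alpha_0^{s_i}\alpha_{i,0}$) that the paper leaves implicit.
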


\begin{proof}
    By Theorem \ref{ThmPapanikolas} and Proposition \ref{Uchino}, we have
    \begin{align}
       &\quad\ \trdeg_{\overline{K}}\overline{K}( \alpha_{n^\prime},\, \alpha_{i,\,0} \mid 0\leq n^\prime  \leq n,  1 \leq i \leq r)\\
       &=\trdeg_{\overline{K}}\overline{K}( \partial^{(n^\prime)}\Omega ,\, \mathcal{L}_{(u_i),\,(s_i)}|_{t=\theta} \mid 0\leq n^\prime  \leq n,  1 \leq i \leq r)\\
       &=\dim \Gamma_{M(n)}=\dim G(n)=n+1+r.
    \end{align}
    The assertion (2) is proven if we put $u_i$ to be the Anderson-Thakur polynomial $H_{s_i-1}$.
\end{proof}

\section{Taylor coefficients of the power series $\Omega$ and of Anderson-Thakur series}\label{sectionproof}
The main goal in this section is to study transcendence properties of Taylor coefficients of $t$-motivic CMPLs and to prove Theorem \ref{MainB}. 
We construct certain pre-$t$-motives $M(i,\,m)$ and observe in Proposition \ref{PropDimTrans} that the calculations of their $t$-motivic Galois groups $\Gamma_{M(i,\,m)}$ give us the desired algebraic independence result Theorem \ref{MainB}.
In preparation for the calculation of $t$-motivic Galois groups, 
we construct explicit algebraic varieties $G_{i,\,m}$, which will be proven in fact to equal the $t$-motivic Galois group $\Gamma_{M(i,\,m)}$ in the end of this section.

\subsection{Simple examples}\label{Subsectionsimplestexample}

In order to help readers to follow the calculation for the general cases, we treat with some special cases.
Let us take positive integers $s_1$ and $s_2$ such that $(q-1) \nmid s_i$ for $i = 1,\,2$ and $s_1/s_2 \not \in p^{\mathbb{Z}}$. 
We also take polynomials $u_1,\,u_2$ in $\overline{K}[t]$ such that 
\begin{equation}
    ||u_i||_\infty<|\theta|_\infty^{\frac{s_i q}{q-1}}
\end{equation}
for $i=1,\,2$.
We consider $t$-motivic CMPL's $\mathcal{L}_1:=\mathcal{L}_{(u_1),\,(s_1)}$, $\mathcal{L}_2:=\mathcal{L}_{(u_2),\,(s_2)}$, and $\mathcal{L}_3:=\mathcal{L}_{(u_1,\,u_2),\,(s_1,\,s_2)}$ and their Taylor expansions
\begin{equation}
    \mathcal{L}_i=\sum_{n\geq 0}\beta_{i,\,n}(t-\theta)^n,\quad \text{($i=1,\,2,\,3$)}.
\end{equation}
We assume $\mathcal{L}_i|_{t=\theta}\neq 0$ for $i=1,\,2$. 
Let us recall that the power series $\Omega$ was defined in Example \ref{Omega} and consider its Taylor expansion
\begin{equation}
    \Omega(t)=\sum \alpha_n (t-\theta)^n.
\end{equation}

We let $\Phi(2,\,0)$ to be the matrix
\begin{equation}
    \begin{pmatrix}
        t-\theta& 0 &0\\
        1& t-\theta &0\\
        0&1&t-\theta
    \end{pmatrix}
    \oplus
    \begin{pmatrix}
        (t-\theta)^{s_1}&0\\
        (t-\theta)^{s_1} u_1^{(-1)}&1
    \end{pmatrix}
    \oplus
    \begin{pmatrix}
        (t-\theta)^{s_2}&0\\
        (t-\theta)^{s_2} u_2^{(-1)}&1
    \end{pmatrix}.
\end{equation}
and let $M(2,\,0)$ be the pre-$t$-motive represented by $\Phi(2,\,0)$. We note that $M(2,\,0)$ is equal to the pre-$t$-motive $M(2)$ in Section \ref{sectionfirststep} and therefore we have
\begin{equation}\label{Eqsimplestexample0}
    \Gamma_{M(2,\,0)}=G_{2,\,0}:=
    \left\{ \begin{pmatrix}
        a_0&0&0\\
        a_1&a_0&0\\
        a_2&a_1&a_0
    \end{pmatrix}
    \oplus
    \begin{pmatrix}
        a_0^{s_1}&0\\
        a_0^{s_1}x_1&1
    \end{pmatrix}
    \oplus
    \begin{pmatrix}
        a_0^{s_2}&0\\
        a_0^{s_2}x_2&1
    \end{pmatrix}
    \, \middle| \,
    a_0 \neq 0
    \right\}
\end{equation}
by Proposition \ref{PropMainBfirststep}, hence the algebraic independence
\begin{equation}
        \trdeg_{\overline{K}}\overline{K}(\alpha_0,\,\alpha_1,\,\alpha_2,\,\beta_{1,\,0},\,\beta_{2,\,0})=5.
    \end{equation}

\begin{example}\label{simplestexample1}
    Our goal is to obtain
    \begin{equation}\label{Eqsimplestexample1}
        \trdeg_{\overline{K}}\overline{K}(\alpha_0,\,\alpha_1,\,\alpha_2,\,\beta_{1,\,0},\,\beta_{2,\,0},\,\beta_{3,\,0})=6.
    \end{equation}
We put
\begin{align}
    \Phi(3,\,0)&:=
    \begin{pmatrix}
        t-\theta& 0 &0\\
        1& t-\theta &0\\
        0&1&t-\theta
    \end{pmatrix}
    \oplus
    \begin{pmatrix}
        (t-\theta)^{s_1}&0\\
        (t-\theta)^{s_1} u_1^{(-1)}&1
    \end{pmatrix}\\
    & \quad \ \ \oplus
    \begin{pmatrix}
        (t-\theta)^{s_2}&0\\
        (t-\theta)^{s_2} u_2^{(-1)}&1
    \end{pmatrix}
    \oplus 
    \begin{pmatrix}
        (t-\theta)^{s_1+s_2}& 0&0\\
        (t-\theta)^{s_1+s_2}u_1^{(-1)}&(t-\theta)^{s_2}&0\\
        0&(t-\theta)^{s_2} u_2^{(-1)}&1
    \end{pmatrix}
\end{align}
and let $M(3,\,0)$ be the pre-$t$-motive defined by this matrix. The matrix $\Phi(3,\,0) \in \operatorname{GL}_{10}(\overline{K}(t)) \cap \operatorname{Mat}_{10}(\overline{K}[t])$ has a rigid analytic trivialization
\begin{align}
    \Psi(3,\,0)&:=
    \begin{pmatrix}
        \Omega& 0 &0\\
        \partial^{(1)} \Omega& \Omega &0\\
        \partial^{(2)} \Omega&\partial^{(1)} \Omega&\Omega
    \end{pmatrix}
    \oplus
    \begin{pmatrix}
        \Omega^{s_1}&0\\
        \Omega^{s_1} \mathcal{L}_1&1
    \end{pmatrix}\\
    & \quad \ \ \oplus
    \begin{pmatrix}
        \Omega^{s_2}&0\\
        \Omega^{s_2} \mathcal{L}_2&1
    \end{pmatrix}
    \oplus 
    \begin{pmatrix}
        \Omega^{s_1+s_2}& 0&0\\
        \Omega^{s_1+s_2}\mathcal{L}_1&\Omega^{s_2}&0\\
        \Omega^{s_1+s_2}\mathcal{L}_3&\Omega^{s_2} \mathcal{L}_2&1
    \end{pmatrix}
\end{align}
 in $\operatorname{GL}_{10}(\mathbb{L}) \cap \operatorname{Mat}_{10}(\mathbb{T})$, see Section \ref{subsectionATseries}. We can show that the matrix $\widetilde{\Psi}(3,\,0)$ is an $\mathbb{L} \otimes_{\overline{K}(t)}\mathbb{L}$-valued point of $G_{3,\,0} \subset \operatorname{GL}_{10/\overline{\mathbb{F}_q(t)}}$ given by
\begin{small}\begin{equation}
    \left\{ \begin{pmatrix}
        a_0&0&0\\
        a_1&a_0&0\\
        a_2&a_1&a_0
    \end{pmatrix}
    \oplus
    \begin{pmatrix}
        a_0^{s_1}&0\\
        a_0^{s_1}x_1&1
    \end{pmatrix}
    \oplus
    \begin{pmatrix}
        a_0^{s_2}&0\\
        a_0^{s_2}x_2&1
    \end{pmatrix}
    \oplus
    \begin{pmatrix}
        a_0^{s_1+s_2}&0&0\\
        a_0^{s_1+s_2} x_1& a_0^{s_2}&0\\
        a_0^{s_1+s_2} x_3&a_0^{s_2}x_2&1
    \end{pmatrix}
    \, \middle| \,
    a_0 \neq 0
    \right\},
\end{equation}
\end{small}
hence we have $ \Gamma_{\Psi(3,\,0)}\subset G_{3,\,0}$ as $\Gamma_{\Psi(3,\,0)}$ was characterized to be the smallest closed subscheme of $\operatorname{GL}_{10/\overline{\mathbb{F}_q(t)}}$ which has $\widetilde{\Psi}(3,\,0)$ as an $\mathbb{L} \otimes_{\overline{K}(t)}\mathbb{L}$-valued point. We identify $\Gamma_{M(3,\,0)}$ with $\Gamma_{\Psi(3,\,0)}$ by Theorem \ref{ThmPapanikolas}.

As the pre-$t$-motive $M(2,\,0)$ is a direct summand of $M(3,\,0)$, we have a surjective homomorphism $\varphi_{3,\,0}:\Gamma_{M(3,\,0)}\twoheadrightarrow \Gamma_{M(2,\,0)}$, whose kernel is denoted by $\mathcal{V}_{3,\,0}$. Lemma \ref{LemmaTannakianProj} tells us that the surjection $\varphi_{3,\,0}$ is given by
\begin{align}
    \begin{pmatrix}
        a_0&0&0\\
        a_1&a_0&0\\
        a_2&a_1&a_0
    \end{pmatrix}
    \oplus
    \begin{pmatrix}
        a_0^{s_1}&0\\
        a_0^{s_1}x_1&1
    \end{pmatrix}
    \oplus
    \begin{pmatrix}
        a_0^{s_2}&0\\
        a_0^{s_2}x_2&1
    \end{pmatrix}
    \oplus
    \begin{pmatrix}
        a_0^{s_1+s_2}&0&0\\
        a_0^{s_1+s_2} x_1& a_0^{s_2}&0\\
        a_0^{s_1+s_2} x_3&a_0^{s_2}x_2&1
    \end{pmatrix}\\
    \mapsto 
    \begin{pmatrix}
        a_0&0&0\\
        a_1&a_0&0\\
        a_2&a_1&a_0
    \end{pmatrix}
    \oplus
    \begin{pmatrix}
        a_0^{s_1}&0\\
        a_0^{s_1}x_1&1
    \end{pmatrix}
    \oplus
    \begin{pmatrix}
        a_0^{s_2}&0\\
        a_0^{s_2}x_2&1
    \end{pmatrix}.
\end{align}
Hence the kernel $\mathcal{V}_{3,\,0}$ is a closed subvariety of 
\begin{equation}
    V_{3,\,0}:=\left \{
    I_3 \oplus I_2 \oplus I_2 \oplus
    \begin{pmatrix}
        1&0&0\\
        0&1&0\\
        x_3&0&1
    \end{pmatrix}
    \right \} \simeq \mathbb{G}_a.
\end{equation}

We take arbitrary $b \in \overline{\mathbb{F}_q(t)}$. Equation \eqref{Eqsimplestexample0} enables us to pick
\begin{equation}
    Q_b^{\prime}:=
    I_3 \oplus
    \begin{pmatrix}
        1&0\\
        b&1
    \end{pmatrix}
    \oplus I_2 \in \Gamma_{M(2,\,0)}(\overline{\mathbb{F}_q(t)}) 
\end{equation}
(by putting $a_0=1$, $x_1=b$, and $a_1=a_2=x_2=0$) and
\begin{equation}
    R^\prime:=I_3 \oplus I_2 \oplus
    \begin{pmatrix}
        1&0\\
        1&1
    \end{pmatrix}  \in \Gamma_{M(2,\,0)}(\overline{\mathbb{F}_q(t)}).
\end{equation}
By the surjectivity of $\varphi_{3,\,0}$, we can take elements $Q_b$ and $R$ of $\Gamma_{M(3,\,0)}(\overline{\mathbb{F}_q(t)})$ which are respectively mapped to $Q_b^\prime$ and $R^\prime$ by $\varphi_{3,\,0}$.
By the constructions, there exist elements $y_Q$ and $y_R$ of $\overline{\mathbb{F}_q(t)}$ such that
\begin{equation}
    Q_b=
    I_3 \oplus
    \begin{pmatrix}
        1&0\\
        b&1
    \end{pmatrix}
    \oplus I_2 \oplus
    \begin{pmatrix}
        1&0&0\\
        b&1&0\\
        y_Q&0&1
    \end{pmatrix}
\end{equation}
and
\begin{equation}
    R=I_3 \oplus I_2 \oplus
    \begin{pmatrix}
        1&0\\
        1&1
    \end{pmatrix}\oplus
    \begin{pmatrix}
        1&0&0\\
        0&1&0\\
        y_R&1&1
    \end{pmatrix}.
\end{equation}
We can calculate the commutator as follows:
\begin{equation}\label{commutator1}
    RQ_bR^{-1}Q_b^{-1}
    =I_3\oplus I_2 \oplus I_2 \oplus
    \begin{pmatrix}
        1&0&0\\
        0&1&0\\
        b&0&1
    \end{pmatrix} \in \mathcal{V}_{3,\,0}(\overline{\mathbb{F}_q(t)}),
\end{equation}
hence it follows that $\mathcal{V}_{3,\,0}=V_{3,\,0}$ as $b$ is arbitrarily chosen. Since $\Gamma_{M(2,\,0)}$ is equal to the algebraic set $G_{2,\,0}$, we obtain
\begin{equation}
    \dim \Gamma_{M(3,\,0)}=\dim \Gamma_{M(2,\,0)}+\dim \mathcal{V}_{3,\,0}= \dim G_{2,\,0}+ \dim V_{3,\,0}=6.
\end{equation}
Therefore, 
\begin{equation}
    \trdeg_{\overline{K}}\overline{K}(\alpha_0,\,\alpha_1,\,\alpha_2,\,\beta_{1,\,0},\,\beta_{2,\,0},\,\beta_{3,\,0})=\dim \Gamma_{M(3,\,0)}=6
\end{equation}
by Theorem \ref{ThmPapanikolas}. As $G_{3,\,0}$ is smooth and irreducible, we also have
\begin{equation}\label{Eqsimplestexample1group}
    \Gamma_{M(3,\,0)}=G_{3,\,0}.
\end{equation}
\end{example}

\begin{example}\label{simplestexample2}
    Let us further assume $p \nmid s_1$. Our goal here is to verify the equality
    \begin{equation}
        \trdeg_{\overline{K}}\overline{K}(\alpha_0,\,\alpha_1,\,\alpha_2,\,\beta_{1,\,0},\,\beta_{2,\,0},\,\beta_{3,\,0},\,\beta_{1,\,1})=7.
    \end{equation}
We define $\Phi(1,\,1) \in \operatorname{GL}_{12}(\overline{K}(t)) \cap \operatorname{Mat}_{12}(\overline{K}[t]) $ to be 
\begin{align}
    &\begin{pmatrix}
        t-\theta& 0 &0\\
        1& t-\theta &0\\
        0&1&t-\theta
    \end{pmatrix}
    \oplus
    \begin{pmatrix}
	(t-\theta)^s& 0&0&0\\
	(t-\theta)^{s} u_1^{(-1)} &1 &0&0\\
	\partial^{(1)}((t-\theta)^s) & 0&(t-\theta)^s&0\\
	\partial^{(1)}( (t-\theta)^{s} u_1^{(-1)}) &\partial^{(1)} ( 1 )&(t-\theta)^{s} u_1^{(-1)}&1 
\end{pmatrix} \\
    & \quad \ \ \oplus
    \begin{pmatrix}
        (t-\theta)^{s_2}&0\\
        (t-\theta)^{s_2} u_2^{(-1)}&1
    \end{pmatrix}
    \oplus 
    \begin{pmatrix}
        (t-\theta)^{s_1+s_2}& 0&0\\
        (t-\theta)^{s_1+s_2}u_1^{(-1)}&(t-\theta)^{s_2}&0\\
        0&(t-\theta)^{s_2} u_2^{(-1)}&1
    \end{pmatrix}
\end{align}
and consider the pre-$t$-motive $M(1,\,1)$ defined by $\Phi (1,\,1)$. The matrix $\Phi(1,\,1)$ has a rigid analytic trivialization
\begin{align}
    \Psi(1,\,1)&:=
    \begin{pmatrix}
        \Omega& 0 &0\\
        \partial^{(1)} \Omega& \Omega &0\\
        \partial^{(2)} \Omega&\partial^{(1)} \Omega&\Omega
    \end{pmatrix}
    \oplus
    \begin{pmatrix}
	\Omega^{s}& 0&0&0\\
	\Omega^{s} \mathcal{L}_1 &1 &0&0\\
	s\Omega^{s-1}\partial^{(1)}\Omega & 0&\Omega^{s}&0\\
	\partial^{(1)}( \Omega^{s}  \mathcal{L}_1 ) &0&\Omega^{s} \mathcal{L}_1 &1 
    \end{pmatrix}
    \\
    & \quad \,\ \oplus
    \begin{pmatrix}
        \Omega^{s_2}&0\\
        \Omega^{s_2} \mathcal{L}_2&1
    \end{pmatrix}
    \oplus 
    \begin{pmatrix}
        \Omega^{s_1+s_2}& 0&0\\
        \Omega^{s_1+s_2}\mathcal{L}_1&\Omega^{s_2}&0\\
        \Omega^{s_1+s_2}\mathcal{L}_3&\Omega^{s_2} \mathcal{L}_2&1
    \end{pmatrix} \in \operatorname{GL}_{12}(\mathbb{L}) \cap \operatorname{Mat}_{12}(\mathbb{T}).
\end{align}
By some calculation, we can see that the matrix $\widetilde{\Psi}(1,\,1)$ is in $G_{1,\,1}(\mathbb{L} \otimes_{\overline{K}(t)}\mathbb{L})$ where $G_{1,\,1} \subset \operatorname{GL}_{12/\overline{\mathbb{F}_q(t)}}$ is defined to be the algebraic subset consists of the matrices of the form
\begin{align}
     &\begin{pmatrix}
        a_0&0&0\\
        a_1&a_0&0\\
        a_2&a_1&a_0
    \end{pmatrix}
    \oplus
    \begin{pmatrix}
        a_0^{s_1}&0&0&0\\
        a_0^{s_1}x_1&1&0&0\\
        sa_0^{s-1}a_1&0&a_0^{s_1}&0\\
        x_4a_0^{s}+sx_1a_0^{s-1}a_1&0&a_0^{s_1}x_1&1
    \end{pmatrix}\\
    &\quad \oplus
    \begin{pmatrix}
        a_0^{s_2}&0\\
        a_0^{s_2}x_2&1
    \end{pmatrix}
    \oplus
    \begin{pmatrix}
        a_0^{s_1+s_2}&0&0\\
        a_0^{s_1+s_2} x_1& a_0^{s_2}&0\\
        a_0^{s_1+s_2} x_3&a_0^{s_2}x_2&1
    \end{pmatrix}
\end{align}
Therefore, the algebraic group $ \Gamma_{\Psi(1,\,1)}$ is a closed subvariety of $ G_{1,\,1}$ since $\Gamma_{\Psi(1,\,1)}$ is the smallest closed subscheme of $\operatorname{GL}_{12/\overline{\mathbb{F}_q(t)}}$ which has $\widetilde{\Psi}(1,\,1)$ as an $\mathbb{L} \otimes_{\overline{K}(t)}\mathbb{L}$-valued point. 
By virtue of Theorem \ref{ThmPapanikolas}, we identify $\Gamma_{M(1,\,1)}$ with $\Gamma_{\Psi(1,\,1)}$.

Since the pre-$t$-motive $M(1,\,1)$ has $M(3,\,0)$ as a sub-pre-$t$-motive, we have a faithfully flat homomorphism $\varphi_{1,\,1}:\Gamma_{M(1,\,1)}\twoheadrightarrow \Gamma_{M(3,\,0)}$. 
Because of Lemma \ref{LemmaTannakianProj},  surjection $\varphi_{3,\,0}$ is described as follow:
\begin{align}
    &\begin{pmatrix}
        a_0&0&0\\
        a_1&a_0&0\\
        a_2&a_1&a_0
    \end{pmatrix}
    \oplus
    \begin{pmatrix}
        a_0^{s_1}&0&0&0\\
        a_0^{s_1}x_1&1&0&0\\
        sa_0^{s-1}a_1&0&a_0^{s_1}&0\\
        x_4a_0^{s}+sx_1a_0^{s-1}a_1&0&a_0^{s_1}x_1&1
    \end{pmatrix}\\
    &\quad \oplus
    \begin{pmatrix}
        a_0^{s_2}&0\\
        a_0^{s_2}x_2&1
    \end{pmatrix}
    \oplus
    \begin{pmatrix}
        a_0^{s_1+s_2}&0&0\\
        a_0^{s_1+s_2} x_1& a_0^{s_2}&0\\
        a_0^{s_1+s_2} x_3&a_0^{s_2}x_2&1
    \end{pmatrix}\\
    &\mapsto
    \begin{pmatrix}
        a_0&0&0\\
        a_1&a_0&0\\
        a_2&a_1&a_0
    \end{pmatrix}
    \oplus
    \begin{pmatrix}
        a_0^{s_1}&0\\
        a_0^{s_1}x_1&1
    \end{pmatrix}
    \oplus
    \begin{pmatrix}
        a_0^{s_2}&0\\
        a_0^{s_2}x_2&1
    \end{pmatrix}
    \oplus
    \begin{pmatrix}
        a_0^{s_1+s_2}&0&0\\
        a_0^{s_1+s_2} x_1& a_0^{s_2}&0\\
        a_0^{s_1+s_2} x_3&a_0^{s_2}x_2&1
    \end{pmatrix}.
\end{align}
Hence the kernel $\mathcal{V}_{1,\,1}$ of $\varphi_{1,\,1}$ is a closed subvariety of 
\begin{equation}
    V_{1,\,1}:=\left \{
    I_3 \oplus 
    \begin{pmatrix}
	   1& 0&0&0\\
	   0 &1 &0&0\\
	   0 & 0&1&0\\
	   x_4&0&0 &1 
    \end{pmatrix}
    \oplus I_2 \oplus I_3
    \right \},
\end{equation}
which is isomorphic to $\mathbb{G}_a$.

Let us take arbitrary $b \in \overline{\mathbb{F}_q(t)}$. By Equation \eqref{Eqsimplestexample1group}, we can pick
\begin{equation}
    Q_b^{\prime}:=
    \begin{pmatrix}
        1&0&0\\
        b&1&0\\
        0&b&1
    \end{pmatrix} \oplus I_2 \oplus I_2 \oplus I_3 
    \in \Gamma_{M(3,\,0)}(\overline{\mathbb{F}_q(t)}) 
\end{equation}
by putting $a_0=1$, $a_1=b$, and $a_2=x_1=x_2=x_3=0$. We can also pick
\begin{equation}
    R^\prime:=I_3 \oplus 
    \begin{pmatrix}
        1&0\\
        1&1
    \end{pmatrix} \oplus I_2 \oplus 
    \begin{pmatrix}
        1&0&0\\
        1&1&0\\
        0&0&1
    \end{pmatrix}
\end{equation}
from $\Gamma_{M(3,\,0)}(\overline{\mathbb{F}_q(t)})$.
By the surjectivity of $\varphi_{1,\,1}$, there are $Q_b$ and $R$ in the group $\Gamma_{M(1,\,1)}(\overline{\mathbb{F}_q(t)})$ which are mapped to $Q_b^\prime$ and $R^\prime$ by $\varphi_{1,\,1}$, respectively.
By the constructions, there exist $y_Q$ and $y_R$ such that
\begin{equation}
    Q_b=
    \begin{pmatrix}
        1&0&0\\
        b&1&0\\
        0&b&1
    \end{pmatrix} \oplus
    \begin{pmatrix}
	   1& 0&0&0\\
	   0 &1 &0&0\\
	   s_1b & 0&1&0\\
	   y_Q&0&0 &1 
    \end{pmatrix} \otimes I_2 \otimes I_3
\end{equation}
and
\begin{equation}
    R=I_3 \oplus 
    \begin{pmatrix}
	   1& 0&0&0\\
	   1 &1 &0&0\\
	   0 & 0&1&0\\
	   y_R&0&1 &1 
    \end{pmatrix} \otimes I_2 \oplus
    \begin{pmatrix}
        1&0&0\\
        1&1&0\\
        0&0&1
    \end{pmatrix}.
\end{equation}
The commutator is
\begin{equation}\label{commutator2}
    RQ_bR^{-1}Q_b^{-1}
    =I_3\oplus 
    \begin{pmatrix}
	   1& 0&0&0\\
	   0 &1 &0&0\\
	   0 & 0&1&0\\
	   s_1b&0&0 &1 
    \end{pmatrix}
    \oplus I_2 \oplus
    I_3 \in \mathcal{V}_{1,\,1}(\overline{\mathbb{F}_q(t)}).
\end{equation}
Therefore, we have $\mathcal{V}_{1,\,1}=V_{1,\,1}$ as $b$ is arbitrarily chosen and $s_1$ is assumed not to be a multiple of $p$. Since we have proven that $\Gamma_{M(3,\,0)}$ is equal to the algebraic set $G_{3,\,0}$, we obtain
\begin{equation}
    \dim \Gamma_{M(1,\,1)}=\dim \Gamma_{M(3,\,0)}+\dim \mathcal{V}_{1,\,1}= 7.
\end{equation}
By Theorem \ref{ThmPapanikolas}, we can conclude that
\begin{equation}
    \trdeg_{\overline{K}}\overline{K}(\alpha_0,\,\alpha_1,\,\alpha_2,\,\beta_{1,\,0},\,\beta_{2,\,0},\,\beta_{3,\,0},\,\beta_{1,\,1})=\dim \Gamma_{M(1,\,1)}=7
\end{equation}
and $\Gamma_{M(1,\,1)}=G_{1,\,1}$.
\end{example}

\subsection{The pre-$t$-motives considered}

In this subsection, we consider certain pre-$t$-motives which have Taylor coefficients of $t$-motivic CMPL's as their periods.
We take and fix a non-negative integer $n$.

\begin{definition}\label{Defsubprime}
For each index $\mathbf{s}=(s_1,\,\dots,\,s_r)\in \mathbb{Z}_{\geq 1}^r$, we put
\begin{equation}
\operatorname{Sub}^\prime(\mathbf{s}):= \{ (s_{i_1},\,s_{i_2},\,\dots,\,s_{i_d}) \mid 1\leq d \leq r,\, 1\leq i_1<\cdots<i_d\leq r \}. \label{eqsubprime}
\end{equation}
 
\end{definition}
Let us take an index $\mathbf{s} =(s_1,\,\dots,\,s_r)\in \mathbb{Z}_{\geq 1}^r$.
Then we enumerate elements of the set $\operatorname{Sub}^\prime(\mathbf{s})$ as $ \mathbf{s}^{[1]},$ $\mathbf{s}^{[2]},\dots,$ $\mathbf{s}^{[\#\operatorname{Sub}^\prime(\mathbf{s})]}$ so that $\operatorname{dep}(\mathbf{s}^{[i]})\leq\operatorname{dep}(\mathbf{s}^{[i+1]})$ for all $1 \leq i\leq \#\operatorname{Sub}^\prime(\mathbf{s})-1$.
For simplicity, we assume that the positive integers $s_1,\,\dots,\,s_r$ are pairwise distinct.
We also take elements $u_1,\,\dots,\,u_r$ in $\overline{K}[t]$. 
We assume that we have 
 \begin{equation}
    ||u_i||_\infty<|\theta|_\infty^{\frac{s_i q}{q-1}}
 \end{equation}
for all $1\leq i\leq r$.
Let us take $1 \leq i\leq \#\operatorname{Sub}^\prime(\mathbf{s})-1$. If $\mathbf{s}^{[i]}=(s_{i_1},\,s_{i_2},\,\dots,\,s_{i_d})$, then we write $\mathbf{u}^{[i]}:=(u_{i_1},\,u_{i_2},\,\dots,$ $u_{i_d})$ by an abuse of language.
We define $\Phi_i$ to be the matrix $\Phi[\mathbf{u}^{[i]};\mathbf{s}^{[i]}]$ defined as in Equation \eqref{PhiUS} and $M[i]$ to be the pre-$t$-motive $M[\mathbf{u}^{[i]};\mathbf{s}^{[i]}]$, whose $\sigma$-action is represented by $\Phi_i$. We note that the matrix $\Psi_i:=\Psi[\mathbf{u}^{[i]};\mathbf{s}^{[i]}]$ defined as \eqref{PsiUS}  is a rigid analytic trivialization of $\Phi_i$.

\begin{definition}\label{mainpretmotive}
    For any integers $0 \leq m \leq n$ and $1 \leq i \leq \#\operatorname{Sub}^\prime(\mathbf{s})$, we define the pre-$t$-motive $M(i,\,m)$ to be
\begin{align}
\begin{cases}
\rho_n C \oplus \displaystyle{\bigoplus_{1 \leq j\leq i} M[j]} & \text{if  $m=0$,}\\
\rho_n C \oplus \displaystyle{\bigoplus_{1 \leq j\leq i} \rho_m M[j] \oplus \bigoplus_{i<j \leq \#\operatorname{Sub}^\prime(\mathbf{s})}\rho_{m-1}M[j] }& \text{if  $m \geq 1$}.
\end{cases}
\end{align}
We further put
\begin{equation}\label{explainMprime}
M^\prime(i,\,m):=
\begin{cases}
M(i-1,\,m)& \text{ $i > 1$},\\
M(\#\operatorname{Sub}^\prime(\mathbf{s}),\,m-1)& \text{$i=1$},
\end{cases}
\end{equation}
for $(i,\,m)\neq (1,\,0)$.
\end{definition}

We put
\begin{equation}
N=N_{n,\,m,\,i}=(n+1)+(m+1)\sum_{1 \leq j\leq i} (\operatorname{dep}\mathbf{s}^{[j]}+1)+m\sum_{i<j \leq \#\operatorname{Sub}^\prime(\mathbf{s})} (\operatorname{dep}\mathbf{s}^{[j]}+1)
\end{equation} for any integers $0 \leq m \leq n$ and $1 \leq i \leq \#\operatorname{Sub}^\prime(\mathbf{s})$. Then we can take a representing matrix 
\begin{align} 
\Phi(i,\,m):=\begin{cases}
\rho_n (t-\theta) \oplus\displaystyle{ \bigoplus_{1 \leq j\leq i} \Phi_j }& \text{if $m=0$,}\\
\rho_n (t-\theta) \oplus \displaystyle{\bigoplus_{1 \leq j\leq i} \rho_m \Phi_j \oplus \bigoplus_{i<j \leq \#\operatorname{Sub}^\prime(\mathbf{s})}\rho_{m-1}\Phi_j }& \text{if $m \geq 1$}
\end{cases}\label{Phiim}
\end{align}
in $\operatorname{GL}_N(\overline{K}[t])$ of $M(i,\,m)$ and its rigid analytic trivialization
\begin{align}\label{Psiim}
\Psi(i,\,m):=
\begin{cases}
\rho_n (\Omega) \oplus\displaystyle{ \bigoplus_{j\leq i}\Psi_j} & \text{if $m=0$,}\\
\rho_n (\Omega) \oplus \displaystyle{ \bigoplus_{j\leq i} \rho_m \Psi_j \oplus \bigoplus_{i<j \leq \#\operatorname{Sub}^\prime(\mathbf{s})}\rho_{m-1}\Psi_j}& \text{if $m \geq1$}
\end{cases}
\end{align}
in $\operatorname{GL}_N(\mathbb{T})$.
We note that the pre-$t$-motive $M(r,\,0)$ coincides with $M(n)$ in Section \ref{sectionfirststep} if $s_1,\,\dots,\,s_r$ are distinct positive integers.

These pre-$t$-motives $M(i,\,m)$ have Taylor coefficients of $t$-motivic CMPLs as their periods.  So the study of their $t$-motivic Galois group tells us about the algebraic independence of these Taylor coefficients via Theorem \ref{ThmPapanikolas} as the following examples suggest.  
The purpose of this section is to explicitly determine the $t$-motivic Galois group of $M(i,\,m)$ and study the transcendence property of Taylor coefficients of $t$-motivic CMPLs.
\begin{example}
    Let us consider the case where $n=2$ and $r=2$. 
    We put $\mathbf{s}^{[1]}=(s_1),\,\mathbf{s}^{[2]}=(s_2),\,\mathbf{s}^{[3]}=(s_1,\,s_2)$.
    The pre-$t$-motives $M(2,0)$, $M(3,0)$, and $M(1,\,1)$ appeared in Subsection \ref{Subsectionsimplestexample}.
The representing matrix $\Phi(2,\,1)$ of the pre-$t$-motive $M(2,\,1)$ is written as follows:
    \begin{align}
        &\begin{pmatrix}
        t-\theta& 0 &0\\
        1& t-\theta &0\\
        0&1&t-\theta
    \end{pmatrix}
        \oplus
        \begin{pmatrix}
            \begin{matrix}
                (t-\theta)^{s_1}&0\\
                (t-\theta)^{s_1} u_{1}^{(-1)}&1
            \end{matrix} &0\\
            \begin{matrix}
                s_1(t-\theta)^{s_1-1}&0\\
                \partial^{(1)}\left((t-\theta)^{s_1} u_{1}^{(-1)}\right)&0
            \end{matrix}&
            \begin{matrix}
                (t-\theta)^{s_1}&0\\
                (t-\theta)^{s_1} u_{1}^{(-1)}&1
            \end{matrix}\\
        \end{pmatrix}\\
        &\quad \ \ \oplus  \begin{pmatrix}
            \begin{matrix}
                (t-\theta)^{s_2}&0\\
                (t-\theta)^{s_2} u_{2}^{(-1)}&1
            \end{matrix} &0\\
            \begin{matrix}
                s_2(t-\theta)^{s_2-1}&0\\
                \partial^{(1)}\left((t-\theta)^{s_2} u_{2}^{(-1)}\right)&0
            \end{matrix}&
            \begin{matrix}
                (t-\theta)^{s_2}&0\\
                (t-\theta)^{s_2} u_{2}^{(-1)}&1
            \end{matrix}
            \end{pmatrix}\\
            &\quad \ \ \ \oplus 
            \begin{pmatrix}
                (t-\theta)^{s_1+s_2}&&\\
                (t-\theta)^{s_1+s_2}u_1^{(-1)}&(t-\theta)^{s_2}&\\
                &(t-\theta)^{s_2}u_2^{(-1)}&1
            \end{pmatrix}
        
    \end{align}
    The matrices $\Phi(2,\,1)$ has a rigid analytic trivialization
    \begin{align}
        &\begin{pmatrix}
        \Omega& 0 &0\\
        \partial^{(1)} \Omega& \Omega &0\\
        \partial^{(2)} \Omega&\partial^{(1)} \Omega&\Omega
    \end{pmatrix}
        \oplus
        \begin{pmatrix}
            \begin{matrix}
                \Omega^{s_1}&0\\
                \Omega^{s_1} \mathcal{L}_{(u_1),\,(s_1)}&1
            \end{matrix} &0\\
            \begin{matrix}
                s_1\Omega^{s_1-1}\partial^{(1)}\Omega&0\\
                \partial^{(1)}\left(\Omega^{s_1} \mathcal{L}_{(u_1),\,(s_1)}\right)&0
            \end{matrix}&
            \begin{matrix}
                \Omega^{s_1}&0\\
                \Omega^{s_1} \mathcal{L}_{(u_1),\,(s_1)}&1
            \end{matrix}\\
        \end{pmatrix}\\
        &\quad \ \ \oplus  \begin{pmatrix}
            \begin{matrix}
                \Omega^{s_2}&0\\
                \Omega^{s_2} \mathcal{L}_{(u_2),\,(s_2)}&1
            \end{matrix} &0\\
            \begin{matrix}
                s_2\Omega^{s_2-1}\partial^{(1)}\Omega&0\\
                \partial^{(1)}\left(\Omega^{s_2} \mathcal{L}_{(u_2),\,(s_2)}\right)&0
            \end{matrix}&
            \begin{matrix}
                \Omega^{s_2}&0\\
                \Omega^{s_2} \mathcal{L}_{(u_2),\,(s_2)}&1
            \end{matrix}
            \end{pmatrix}\\
            &\quad \ \ \ \oplus 
            \begin{pmatrix}
                \Omega^{s_1+s_2}&&\\
                \Omega^{s_1+s_2}\mathcal{L}_{(u_1),\,(s_1)}&\Omega^{s_2}&\\
                \Omega^{s_1+s_2}\mathcal{L}_{(u_1,\,u_2),\,(s_1,\,s_2)}&\Omega^{s_2}\mathcal{L}_{(u_2),\,(s_2)}&1
            \end{pmatrix}
        
    \end{align}
    Therefore, Theorem \ref{ThmPapanikolas} implies that the transcendental degree of the field generated by $\widetilde{\pi},\,\partial^{(1)}\Omega |_{t=\theta},\,\mathcal{L}_{(u_1),\,(s_1)}|_{t=\theta},\,\mathcal{L}_{(u_2),\,(s_2)}|_{t=\theta},\,\mathcal{L}_{(u_1,\,u_2),(s_1,\,s_2)}|_{t=\theta}$, $\partial^{(1)}\mathcal{L}_{(u_1),\,(s_1)}|_{t=\theta}$, and $\partial^{(1)}\mathcal{L}_{(u_2),\,(s_2)}|_{t=\theta}$ over $\overline{K}$ is equal to the dimension of the algebraic group $\Gamma_{M(2,\,1)}$.
    
    Similarly, the pre-$t$-motive $M(3,\,1)$ is represented by the matrix $\Phi(3,\,1)$ which is written as
\begin{align}
        &\begin{pmatrix}
        t-\theta& 0 &0\\
        1& t-\theta &0\\
        0&1&t-\theta
    \end{pmatrix}
        \oplus
        \begin{pmatrix}
            \begin{matrix}
                (t-\theta)^{s_1}&0\\
                (t-\theta)^{s_1} u_{1}^{(-1)}&1
            \end{matrix} &O\\
            \begin{matrix}
                s_1(t-\theta)^{s_1-1}&0\\
                \partial^{(1)}\left((t-\theta)^{s_1} u_{1}^{(-1)}\right)&0
            \end{matrix}&
            \begin{matrix}
                (t-\theta)^{s_1}&0\\
                (t-\theta)^{s_1} u_{1}^{(-1)}&1
            \end{matrix}\\
        \end{pmatrix}\\
        &\quad \ \ \oplus  \begin{pmatrix}
            \begin{matrix}
                (t-\theta)^{s_2}&0\\
                (t-\theta)^{s_2} u_{12}^{(-1)}&1
            \end{matrix} &O\\
            \begin{matrix}
                s_2(t-\theta)^{s_2-1}&0\\
                \partial^{(1)}\left((t-\theta)^{s_2} u_{2}^{(-1)}\right)&0
            \end{matrix}&
            \begin{matrix}
                (t-\theta)^{s_2}&0\\
                (t-\theta)^{s_2} u_{2}^{(-1)}&1
            \end{matrix}
            \end{pmatrix}\oplus \begin{pmatrix}
             \Phi_3&O\\
            \partial^{(1)} \Phi_3 &\Phi_3
            \end{pmatrix}
    \end{align}
    where
    \begin{align}
        \Phi_3:&= \begin{pmatrix}
                (t-\theta)^{s_1+s_2}&&\\
                (t-\theta)^{s_1+s_2}u_1^{(-1)}&(t-\theta)^{s_2}&\\
                &(t-\theta)^{s_2}u_2^{(-1)}&1
            \end{pmatrix} \text{ and }\\
            \partial^{(1)} \Phi_3&=\begin{pmatrix}
                (s_1+s_2)(t-\theta)^{s_1+s_2-1}&&\\
                \partial^{(1)}\left((t-\theta)^{s_1+s_2}u_1^{(-1)}\right)&s_2(t-\theta)^{s_2-1}&\\
                &\partial^{(1)}\left((t-\theta)^{s_2}u_2^{(-1)}\right)&0
            \end{pmatrix},
    \end{align}
    and has a rigid analytic trivialization $\Psi(3,\,1)$ given by
    \begin{align}
        &\begin{pmatrix}
        \Omega& 0 &0\\
        \partial^{(1)} \Omega& \Omega &0\\
        \partial^{(2)} \Omega&\partial^{(1)} \Omega&\Omega
    \end{pmatrix}
        \oplus
        \begin{pmatrix}
            \begin{matrix}
                \Omega^{s_1}&0\\
                \Omega^{s_1} \mathcal{L}_{(u_1),\,(s_1)}&1
            \end{matrix} &O\\
            \begin{matrix}
                s_1\Omega^{s_1-1}\partial^{(1)} \Omega&0\\
                \partial^{(1)}\left(\Omega^{s_1} \mathcal{L}_{(u_1),\,(s_1)}\right)&0
            \end{matrix}&
            \begin{matrix}
                \Omega^{s_1}&0\\
                \Omega^{s_1} \mathcal{L}_{(u_1),\,(s_1)}&1
            \end{matrix}\\
        \end{pmatrix}\\
        &\quad \ \ \oplus  \begin{pmatrix}
            \begin{matrix}
                \Omega^{s_2}&0\\
                \Omega^{s_2} \mathcal{L}_{(u_2),\,(s_2)}&1
            \end{matrix} &0\\
            \begin{matrix}
                s_2\Omega^{s_2-1}\partial^{(1)} \Omega&0\\
                \partial^{(1)}\left(\Omega^{s_2} \mathcal{L}_{(u_2),\,(s_2)}\right)&0
            \end{matrix}&
            \begin{matrix}
                \Omega^{s_2}&0\\
                \Omega^{s_2} \mathcal{L}_{(u_2),\,(s_2)}&1
            \end{matrix}
            \end{pmatrix}\oplus \begin{pmatrix}
             \Psi_3&O\\
            \partial^{(1)} \Psi_3 &\Psi_3
            \end{pmatrix}
    \end{align}
    where
    \begin{align}
        \Psi_3:&=\begin{pmatrix}
                \Omega^{s_1+s_2}&&\\
                \Omega^{s_1+s_2}\mathcal{L}_{(u_1),\,(s_1)}&\Omega^{s_2}&\\
                \Omega^{s_1+s_2}\mathcal{L}_{(u_1,\,u_2),\,(s_1,\,s_2)}&\Omega^{s_2}\mathcal{L}_{(u_2),\,(s_2)}&1
            \end{pmatrix} \text{ and}\\
             \partial^{(1)} \Psi_3&=\begin{pmatrix}
                \partial^{(1)}\Omega^{s_1+s_2}&&\\
                \partial^{(1)}\left(\Omega^{s_1+s_2}\mathcal{L}_{(u_1),\,(s_1)}\right)&\partial^{(1)}\Omega^{s_2}&\\
                \partial^{(1)}\left(\Omega^{s_1+s_2}\mathcal{L}_{(u_1,\,u_2),\,(s_1,\,s_2)}\right)&\partial^{(1)}\left(\Omega^{s_2}\mathcal{L}_{(u_2),\,(s_2)}\right)&0
                
            \end{pmatrix}.
    \end{align}
    Hence, the transcendental degree of the field generated over $\overline{K}$ by $\widetilde{\pi},\,\partial^{(1)}\Omega |_{t=\theta},$ $\mathcal{L}_{(u_1),\,(s_1)}|_{t=\theta}$, $\mathcal{L}_{(u_2),\,(s_2)}|_{t=\theta}$, $\mathcal{L}_{(u_1,\,u_2),(s_1,\,s_2)}|_{t=\theta}$, $\partial^{(1)}\mathcal{L}_{(u_1),\,(s_1)}|_{t=\theta}$, $\partial^{(1)}\mathcal{L}_{(u_2),\,(s_2)}|_{t=\theta}$, and $\partial^{(1)} \mathcal{L}_{(u_1,\,u_2)}(s_1,$ $s_2)|_{t=\theta}$ is equal to $\dim \Gamma_{M(3,\,1)}$.
\end{example}

More generally, we have the following proposition. This motivates us to consider the dimensions of $t$-motivic Galois groups of aforementioned pre-$t$-motives $M(i,\,m)$. 
Let us recall that we write $\mathcal{L}_{\mathbf{u}^\prime,\,\mathbf{s}^\prime}$ for the $t$-motivic CMPL as in Section \ref{subsectionATseries} for each tuple $\mathbf{u}^\prime=(u_1,\,\dots,\,u_d) \in \overline{K}[t]$ and $\mathbf{s}^\prime=(s_1,\,\dots,\,s_d) \in \mathbb{Z}_{\geq 1}^r$ ($r \geq 1)$.
\begin{Proposition}\label{PropDimTrans}
    Take an index $\mathbf{s}=(s_1,\,\dots,\,s_r) \in \mathbb{Z}_{\geq 1}^r$ and polynomials $u_1,\,\dots,\,u_r \in \overline{K}[t]$ such that $s_i \neq s_j$, $\mathcal{L}_{(u_i),\,(s_i)}(t)|_{t=\theta}\neq 0$, and $||u_i||_\infty<|\theta|_\infty^{\frac{s_i q}{q-1}}$ for all $1\leq i<j \leq r$. If we consider the Taylor expansions
    \begin{equation}
        \Omega(t)=\sum_{n=0}^{\infty} \alpha_n (t-\theta)^n,\quad \mathcal{L}_{(u_{i_1},\,\dots,\,u_{i_d}),(s_{i_1},\,\dots,\,s_{i_d})}(t)=\sum_{n=0}^{\infty} \alpha_{(s_{i_1},\,\dots,\,s_{i_d}),\,n}(t-\theta)^n
    \end{equation}
for each $(s_{i_1},\,\dots,\,s_{i_d}) \in \operatorname{Sub}^\prime(\mathbf{s})$,
    then the field generated by the set
    \begin{equation}
        \left \{ \alpha_{n^\prime},\, \alpha_{\mathbf{s}^{[i^\prime]},\,m},\, \alpha_{\mathbf{s}^{[i^{\prime \prime}]},\,m^{\prime \prime}}\,\middle|\, \substack{0\leq n^\prime  \leq n,\\
        1 \leq i^\prime \leq i,\,0 \leq m^\prime \leq m\\
        i < i^{\prime \prime} \leq \#\operatorname{Sub}^{\prime}(\mathbf{s}),\,0 \leq m^{\prime \prime} < m } \right\}
    \end{equation}
    over $\overline{K}$ has transcendental degree equal to the dimension of the $t$-motivic Galois group $\Gamma_{M(i,\,m)}$ for any $1\leq i \leq \#\operatorname{Sub}^{\prime}(\mathbf{s})$ and $1 \leq m \leq n$.
\end{Proposition}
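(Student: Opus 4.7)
The plan is to apply Theorem \ref{ThmPapanikolas} to the pair $(\Phi(i,m),\Psi(i,m))$ defined in \eqref{Phiim}--\eqref{Psiim} and then to identify the resulting field of periods with the field $F$ generated over $\overline{K}$ by the Taylor coefficients listed in the statement. The hypothesis of Theorem \ref{ThmPapanikolas} is easy to verify: $\Phi(i,m)$ is block-diagonal, each block being block lower triangular with powers of $(t-\theta)$ on the diagonal, so $\det\Phi(i,m)=c(t-\theta)^N$ for some $c\in\overline{K}^\times$ and $N\ge 0$. The theorem then gives
\begin{equation*}
\trdeg_{\overline{K}}\overline{K}\bigl(\Psi(i,m)_{ab}|_{t=\theta}\bigr)=\dim\Gamma_{M(i,m)},
\end{equation*}
so the proof reduces to showing $F=\overline{K}\bigl(\Psi(i,m)_{ab}|_{t=\theta}\bigr)$.

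For the inclusion of periods in $F$, every entry of $\rho_n(\Omega)$ specialized at $t=\theta$ equals $\partial^{(n')}\Omega|_{t=\theta}=\alpha_{n'}$ with $0\le n'\le n$ by Proposition \ref{Uchino}. Each entry of $\rho_m\Psi_j$ (respectively $\rho_{m-1}\Psi_j$) is a hyperderivative $\partial^{(k)}$, with $k\le m$ (resp.\ $k\le m-1$), of an entry of $\Psi_j$; inspecting \eqref{PsiUS}, every such entry of $\Psi_j$ is a product of a power of $\Omega$ and a $t$-motivic CMPL $\mathcal{L}_{\mathbf{u}',\mathbf{s}'}$ attached to a contiguous subtuple $\mathbf{s}'$ of $\mathbf{s}^{[j]}$. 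Combining the Leibniz rule \eqref{Leibniz} with Proposition \ref{Uchino}, each such specialization becomes a polynomial in the $\alpha_{n'}$ ($n'\le n$) and in the Taylor coefficients $\alpha_{\mathbf{s}',l}$ with $l\le k$. Since $\dep(\mathbf{s}^{[\cdot]})$ is nondecreasing in the enumeration, any proper contiguous subtuple of $\mathbf{s}^{[j]}$ is some $\mathbf{s}^{[j']}$ with $j'<j$, so the index pairs $(j',l)$ always fall within the ranges permitted by $F$.

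For the reverse inclusion, the $\alpha_{n'}$ are literal entries of $\rho_n(\Omega)|_{t=\theta}$. To extract $\alpha_{\mathbf{s}^{[j]},m'}$, I focus on the bottom-left entry $\Omega^{\operatorname{wt}(\mathbf{s}^{[j]})}\mathcal{L}_{\mathbf{u}^{[j]},\mathbf{s}^{[j]}}$ of $\Psi_j$; applying $\partial^{(m')}$ and specializing via Leibniz yields
\begin{equation*}
\alpha_0^{\operatorname{wt}(\mathbf{s}^{[j]})}\,\alpha_{\mathbf{s}^{[j]},m'}\ +\ \bigl(\text{polynomial in the }\alpha_k\text{'s and in }\alpha_{\mathbf{s}^{[j]},l}\text{ for }l<m'\bigr).
\end{equation*}
Because $\alpha_0=\widetilde{\pi}^{-1}\neq 0$ itself lies in the period field, an induction on $m'$ recovers $\alpha_{\mathbf{s}^{[j]},m'}$ for $m'\le m$ when $j\le i$ and for $m'\le m-1$ when $j>i$, exhausting the generating set of $F$.

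The main obstacle is organizational rather than conceptual: one must align the index ranges $(j',l)$ arising from the Leibniz expansion with those prescribed in the statement, and the depth-nondecreasing enumeration of $\operatorname{Sub}^\prime(\mathbf{s})$ is exactly what makes these ranges match. With this bookkeeping and the $\alpha_0\ne 0$ induction in place, the equality $F=\overline{K}\bigl(\Psi(i,m)_{ab}|_{t=\theta}\bigr)$ follows, and Theorem \ref{ThmPapanikolas} concludes the proof.
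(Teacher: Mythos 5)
Your proposal is correct and follows essentially the same route as the paper: apply Theorem \ref{ThmPapanikolas} to $(\Phi(i,m),\Psi(i,m))$, then use the Leibniz rule \eqref{Leibniz} and Proposition \ref{Uchino} to identify the field generated by the specialized entries of $\Psi(i,m)$ with the field generated by the listed Taylor coefficients. The paper states this identification in one line, whereas you spell out both inclusions (including the $\alpha_0\neq 0$ induction for the reverse one); this is just a more detailed version of the same argument.
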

\begin{proof}
Theorem \ref{ThmPapanikolas} and the Leibniz rule \eqref{Leibniz} show that $\dim \Gamma_{M(i,\,m)}$ is equal to the transcendental degree of
\begin{equation}
        \overline{K} \left( \partial^{(n^\prime)}\Omega|_{t=\theta},
\partial^{(m^\prime)}\mathcal{L}_{\mathbf{u}^{[i^\prime]},\,\mathbf{s}^{[i^\prime]}},\,
\partial^{(m^{\prime \prime})}\mathcal{L}_{\mathbf{u}^{[i^{\prime \prime}]},\,\mathbf{s}^{[i^{\prime \prime}]}}\,\middle|\, \substack{0\leq n^\prime  \leq n,\\
        1 \leq i^\prime \leq i,\,0 \leq m^\prime \leq m\\
        i \leq i^{\prime \prime} \leq \#\operatorname{Sub}^{\prime}(\mathbf{s}),\,0 \leq m^{\prime \prime} < m } \right).
    \end{equation}
over $\overline{K}$. By Proposition \ref{Uchino}, this field is equal to 
\begin{equation}
        \overline{K}\left ( \alpha_{n^\prime},\, \alpha_{\mathbf{s}^{[i^\prime]},\,m},\, \alpha_{\mathbf{s}^{[i^{\prime \prime}]},\,m^{\prime \prime}}\,\middle|\, \substack{0\leq n^\prime  \leq n,\\
        1 \leq i^\prime \leq i,\,0 \leq m^\prime \leq m\\
        i \leq i^{\prime \prime} \leq \#\operatorname{Sub}^{\prime}(\mathbf{s}),\,0 \leq m^{\prime \prime} < m } \right).
    \end{equation}
\end{proof}

Let us consider the relationships among pre-$t$-motives $M(i,\,m)$. We recall that $\rho_{m-1} M[i]$ is a sub-pre-$t$-motive of $\rho_m M[i]$ for each $1 \leq i \leq \#\operatorname{Sub}^\prime(\mathbf{s})$ and $m \geq 1$ (cf. \cite[Remark 3.2]{Maurischat2018}). Therefore, the Tannakian subcategory $\langle M^\prime(i,\,m) \rangle$, which is generated by $M^\prime(i,\,m)$ can be seen as full Tannakian subcategory of $\langle M(i,\,m) \rangle$ for $1
 \leq i \leq 2^r-1$ and $0 \leq m \leq n$. 

\begin{definition}\label{mathcalVimmorph}
    We define $\varphi_{i,\,m}$ to be the faithfully flat homomorphism 
\begin{equation}\label{Eqphiim}
     \Gamma_{M(i,\,m)}\twoheadrightarrow\Gamma_{M^\prime(i,\,m)}
\end{equation}
of algebraic groups given by Tannakian duality (\cite[Proposition 2.21]{Deligne1982}). We also write $\varphi_{i,\,m}$ for the faithfully flat morphism from $\Gamma_{\Psi(i,\,m)}$ to $\Gamma_{\Psi^\prime(i,\,m)}$ given by the homomorphism above and Theorem \ref{ThmPapanikolas}.
\end{definition}

We will study the structures of $t$-motivic Galois groups by the induction on tuples $(i,\,m)$ where the order is given by
\begin{equation}
    (i_1,\,m_1) \geq (i_2 ,\,m_2) \text{ if ``$m_1> m_2$'' or ``$m_1=m_2$ and $i_1 \geq i_2$''}.
\end{equation}
Homomorphisms $\varphi_{i,\,m}$ play important roles in the induction argument.

\begin{definition}\label{mathcalVim}
    We let $\mathcal{V}_{i,\,m}$ be the kernel of $\varphi_{i,\,m}$ for  $1
 \leq i \leq \#\operatorname{Sub}^\prime(\mathbf{s})$ and $0 \leq m \leq n$ with $(i,\,m)\neq (1,\,0)$.
\end{definition}

\subsection{Explicit descriptions of $t$-motivic Galois groups}\label{subsectionexplicit}

Our aim in this subsection is define algebraic varieties (Definition \ref{explicitform}) which we use to study the $t$-motivic Galois groups of pre-$t$-motives constructed in the previous subsection.

We take an index $\mathbf{s}=(s_1,\,\dots,\,s_r)$ and fix an enumeration $\mathbf{s}^{[1]},\,\mathbf{s}^{[2]},\,\dots,$ $\,\mathbf{s}^{[\#\operatorname{Sub}^\prime(\mathbf{s})]}$ of the set 
\begin{equation}
\operatorname{Sub}^\prime(\mathbf{s}):= \{ (s_{i_1},\,s_{i_2},\,\dots,\,s_{i_d}) \mid 1\leq d \leq r,\, 1\leq i_1<\cdots<i_d\leq r \} 
\end{equation}
such that we have $\operatorname{dep}(\mathbf{s}^{[i]})\leq\operatorname{dep}(\mathbf{s}^{[i+1]})$ for each $1 \leq i \leq \#\operatorname{Sub}^\prime(\mathbf{s})-1$.

Let us take variables $a_0,\,a_1,\,\dots,\,a_n,$ $ x_{m,\, \mathbf{s}^{\prime}}$ ($0 \leq m \leq n$, $\mathbf{s}^{\prime} \in \operatorname{Sub}^\prime(\mathbf{s})$).
For $m \geq 1$ and $s \geq 1$, we define the polynomials $\left\{\underline{D}_{s}
^{(0)}, \underline{D}_{s}
^{(1)},\ldots, \underline{D}_{s}
^{(m)}\right\}$ by the following expansion

\begin{align}
 \prod_{i=1}^s
\begin{pmatrix}
   X_{0,\,i} & 0 &\cdots&\cdots&0\\
   X_{1,\,i} & X_{0,\,i}&0&\cdots&0\\
   X_{2,\,i} & X_{1,\,i}& X_{0,\,i}&\ddots&\vdots\\
   \vdots&\vdots& \ddots&\ddots&0 \\
   X_{m,\,i}& X_{m-1,\,i}&\cdots &X_{1,\,i}&X_{0,\,i}\\
\end{pmatrix}
=:\begin{pmatrix}
   \underline{D}_s^{(0)} & 0 &\cdots&0\\
   \underline{D}_s^{(1)}  & \underline{D}_s^{(0)} &\cdots&0\\
   \vdots& \vdots&\ddots&\vdots \\
   \underline{D}_s^{(m)} & \underline{D}_s^{(m-1)} &\cdots&\underline{D}_s^{(0)}
\end{pmatrix}.
\end{align}
We mention that $\underline{D}_{s}^{(m)}$ is a polynomial in the variable $\{X_{0,1},\ldots,X_{0,s},X_{1,1}$, $\ldots $, $ X_{1,s},\ldots, X_{m,1},\ldots,X_{m,s} \}$. Without confusion with matrix product, we write 
\[\underline{D}_{s}
^{(m)}=\underline{D}_{s}
^{(m)} \begin{pmatrix}
   X_{0,\,1} &X_{1,\,1}& \cdots& X_{m,\,1}\\
   \vdots&\vdots &\cdots&\vdots \\
    X_{0,\,s} &X_{1,\,s}& \cdots& X_{m,\,s}  
\end{pmatrix} \]to emphasize that it is a polynomial in the variables above. We note further that we can explicitly write down $\underline{D}_{s}^{(m)}$ as
\begin{equation}
\underline{D}_{s}
^{(m)} \begin{pmatrix}
   X_{0,\,1} &X_{1,\,1}& \cdots& X_{m,\,1}\\
   \vdots&\vdots &\cdots&\vdots \\
    X_{0,\,s} &X_{1,\,s}& \cdots& X_{m,\,s}  
\end{pmatrix}=\sum_{j_1+j_2+\cdots+j_s=m}X_{j_1,\,1}X_{j_2,\,2}\cdots X_{j_s,\,s}.
\end{equation} 
It follows that once we replace $X_{i,j}$ by $\partial^{(i)}f_{j}$, the formula \eqref{Leibniz} implies that
\begin{align}
\underline{D}_{s}
^{(m)} \begin{pmatrix}
   \partial^{(0)}f_1 &\partial^{(1)}f_1& \cdots& \partial^{(m)}f_1\\
   \vdots&\vdots &\cdots&\vdots \\
    \partial^{(0)}f_s &\partial^{(1)}f_s& \cdots& \partial^{(m)}f_s  
\end{pmatrix}&=\sum_{j_1+j_2+\cdots+j_s=m}\partial^{(j_1)}f_1\partial^{(j_2)}f_2\cdots \partial^{(j_s)}f_s\\
&=\partial^{(m)}(f_1\cdots f_s)
\end{align}
for $f_1,\,\dots,\,f_s \in \mathbb{C}_\infty((t))$.

\begin{definition}\label{Smj}
For each $0 \leq m \leq n$ and $0 \leq j \leq 2^r-1$, we write $S_{m,\,j}$ for the lower triangle matrix of size $d^\prime+1$ whose $(k,\,l)$-entry is
\begin{align}
&\underline{D}_{s_{j_l^\prime}+\cdots+s_{j_{d^\prime}^\prime}+1}^{(m)}
\begin{pmatrix}
   a_0 &a_1& \cdots& a_m\\
   \vdots&\vdots &\vdots &\vdots\\
   a_0 &a_1& \cdots& a_m\\
   x_{0,\,(s_{j_l^\prime},\dots,\,s_{j_{k-1}^\prime})} & x_{1,\,(s_{j_l^\prime},\dots,\,s_{j_{k-1}^\prime})} & \cdots& x_{m,\,(s_{j_l^\prime},\dots,\,s_{j_{k-1}^\prime})}\\
   \end{pmatrix} \text{(if $k>l$)}\\
  & \underline{D}_{s_{j_l^\prime}+\cdots+s_{j_{d^\prime}^\prime}}^{(m)}
\begin{pmatrix}
   a_0 &a_1& \cdots& a_m\\
   \vdots&\vdots &\vdots &\vdots\\
   a_0 &a_1& \cdots& a_m
   \end{pmatrix} \text{ (if $k=l$)}
\end{align}
where $\mathbf{s}^{[j]}=(s_{j_1^\prime},\dots,\,s_{j_{d^\prime}^\prime})$ and $1\leq l \leq k \leq d^\prime+1$. The entry in the lower right corner is $1$ if $m=0$ and is $0$ if $m \geq1$.
\end{definition}

\begin{example}If $\mathbf{s}^{[j]}=(s,\,s^\prime) \in \mathbb{Z}^2$, then $S_{m,\,j}$ is given as follows:

        \begin{equation}
                \begin{pmatrix}
                    \displaystyle{\sum_{i_1+\cdots+i_{s+s^\prime}=m}}a_{i_1}\cdots a_{i_{s+s^\prime}}&0&0\\
                     \displaystyle{\sum_{i_1+\cdots+i_{s+s^\prime}+i=m}}a_{i_1}\cdots a_{i_{s+s^\prime}}x_{i,\,(s)}&\displaystyle{\sum_{i_1+\cdots+i_{s^\prime}=m}}a_{i_1}\cdots a_{i_{s^\prime}}&0 \\
                    \displaystyle{\sum_{i_1+\cdots+i_{s+s^\prime}+i=m}}a_{i_1}\cdots a_{i_{s+s^\prime}}x_{i,\,\mathbf{s}^{[j]}}&\displaystyle{\sum_{i_1+\cdots+i_{s^\prime}+i=m}}a_{i_1}\cdots a_{i_{s^\prime}}x_{i,\,(s^\prime)}&0
                \end{pmatrix}
        \end{equation}
    for $m \geq 1$.
\end{example}

Definition \ref{Smj} helps us to study the structure of the $t$-motivic Galois group of pre-$t$-motives $M(i,\,m)$. Using Definition \ref{Smj}, we define explicit algebraic varieties $G_{i,\,m}$ below and show later that these varieties
contains the $t$-motivic Galois group $\Gamma_{M(i,\,m)}$ of pre-$t$-motives $M(i,\,m)$ as closed subvarieties.

\begin{definition}\label{explicitform}
Let us fix $n$ and take arbitrary $0 \leq m \leq n$ and $1 \leq i \leq \# \operatorname{Sub}^{\prime}(\mathbf{s})$. Recall that we put
\begin{equation}
N=N_{n,\,m,\,i}=(n+1)+(m+1)\sum_{1 \leq j\leq i} (\operatorname{dep}\mathbf{s}^{[j]}+1)+m\sum_{i<j \leq \#\operatorname{Sub}^\prime(\mathbf{s})} (\operatorname{dep}\mathbf{s}^{[j]}+1).
\end{equation}
We define $G_{i,\,m}$ to be the closed algebraic subset of $\operatorname{GL}_N$ over $\overline{\mathbb{F}_q(t)}$ consisting of matrices of the form
\begin{align}\label{elementsofGim}
\begin{pmatrix}
   a_0 & 0 &\cdots&\cdots&0\\
   a_1 & a_0&0&\cdots&0\\
   a_2 & a_1& a_0&\ddots&\vdots\\
   \vdots&\vdots& \ddots&\ddots&0 \\
   a_{n}& a_{n-1}&\cdots &a_1&a_0\\
\end{pmatrix}
\oplus \bigoplus_{1 \leq j\leq i}\begin{pmatrix}
   S_{0,\,j} & 0 &\cdots&0\\
    S_{1,\,j}  &  S_{0,\,j} &\cdots&0\\
   \vdots& \vdots&\ddots&\vdots \\
    S_{m,\,j} &  S_{m-1,\,j} &\cdots& S_{0,\,j} 
\end{pmatrix}\\
 \oplus \bigoplus_{i<j \leq \#\operatorname{Sub}^\prime(\mathbf{s})}
\begin{pmatrix}
   S_{0,\,j} & 0 &\cdots&0\\
    S_{1,\,j}  &  S_{0,\,j} &\cdots&0\\
   \vdots& \vdots&\ddots&\vdots \\
    S_{m-1,\,j} &  S_{m-2,\,j} &\cdots& S_{0,\,j} 
\end{pmatrix}.
\end{align}
for $m \geq 1$. 
In the case $m=0$, $G_{i,\,m}$ is defined to be the closed algebraic subset of $\operatorname{GL}_N$ over $\overline{\mathbb{F}_q(t)}$ consisting of matrices of the form
\begin{equation}
\begin{pmatrix}
   a_0 & 0 &\cdots&\cdots&0\\
   a_1 & a_0&0&\cdots&0\\
   a_2 & a_1& a_0&\ddots&\vdots\\
   \vdots&\vdots& \ddots&\ddots&0 \\
   a_{n}& a_{n-1}&\cdots &a_1&a_0\\
\end{pmatrix}
\oplus \bigoplus_{1 \leq j\leq i}S_{0,\,j}.
\end{equation}

The variety $G(r,\,0)$ can be identified with the algebraic group $G(n)$ in Section \ref{sectionfirststep} if $s_1,\,\dots,\,s_r$ are distinct positive integers.

We let $G_{i,\,m}^{\prime}$ be the algebraic variety

\begin{equation}\label{explainGprime1}
\begin{cases}
G_{i-1,\,m}& \text{if $i > 1$},\\
G_{\#\operatorname{Sub}^\prime(\mathbf{s}),\,m-1}& \text{if $i=1$},
\end{cases}
\end{equation}
and let $\Pi_{i,\,m}:G_{i,\,m} \twoheadrightarrow G_{i,\,m}^{\prime}$ be the projections for $1 \leq i \leq \#\operatorname{Sub}^\prime(\mathbf{s})$ and $0\leq m \leq n$ with $(i,\,m) \neq (1,0)$. 
\end{definition}
For instance, the projection $\Pi_{i,\,m}$ is given by
\begin{small}
\begin{align}\label{elementsofGim2}
\begin{pmatrix}
   a_0 & 0 &\cdots&\cdots&0\\
   a_1 & a_0&0&\cdots&0\\
   \vdots& \vdots&\ddots&\ddots&\vdots \\
   a_{n-1}& a_{n-2}&\cdots&a_0&0\\
   a_n&a_{n-1} &\cdots&a_1&a_0
\end{pmatrix}
\oplus \bigoplus_{1 \leq j\leq i}\begin{pmatrix}
   S_{0,\,j} & 0 &\cdots&0\\
    S_{1,\,j}  &  S_{0,\,j} &\cdots&0\\
   \vdots& \vdots&\ddots&\vdots \\
    S_{m,\,j} &  S_{m-1,\,j} &\cdots& S_{0,\,j} 
\end{pmatrix}\\
 \oplus \bigoplus_{i<j \leq \#\operatorname{Sub}^\prime(\mathbf{s})}
\begin{pmatrix}
   S_{0,\,j} & 0 &\cdots&0\\
    S_{1,\,j}  &  S_{0,\,j} &\cdots&0\\
   \vdots& \vdots&\ddots&\vdots \\
    S_{m-1,\,j} &  S_{m-2,\,j} &\cdots& S_{0,\,j} 
\end{pmatrix}\\
=\begin{pmatrix}
   a_0 & 0 &\cdots&\cdots&0\\
   a_1 & a_0&0&\cdots&0\\
   \vdots& \vdots&\ddots&\ddots&\vdots \\
   a_{n-1}& a_{n-2}&\cdots&a_0&0\\
   a_n&a_{n-1} &\cdots&a_1&a_0
\end{pmatrix}
\oplus \bigoplus_{1 \leq j\leq i-1}\begin{pmatrix}
   S_{0,\,j} & 0 &\cdots&0\\
    S_{1,\,j}  &  S_{0,\,j} &\cdots&0\\
   \vdots& \vdots&\ddots&\vdots \\
    S_{m,\,j} &  S_{m-1,\,j} &\cdots& S_{0,\,j} 
\end{pmatrix}\oplus\\
 \begin{pmatrix}
   S_{0,\,i} & 0 &\cdots&0\\
    S_{1,\,i}  &  S_{0,\,i} &\cdots&0\\
   \vdots& \vdots&\ddots&\vdots \\
    S_{m,\,i} &  S_{m-1,\,i} &\cdots& S_{0,\,i} 
\end{pmatrix}\oplus \bigoplus_{i<j \leq \#\operatorname{Sub}^\prime(\mathbf{s})}
\begin{pmatrix}
   S_{0,\,j} & 0 &\cdots&0\\
    S_{1,\,j}  &  S_{0,\,j} &\cdots&0\\
   \vdots& \vdots&\ddots&\vdots \\
    S_{m-1,\,j} &  S_{m-2,\,j} &\cdots& S_{0,\,j} 
\end{pmatrix}
\\\mapsto 
\begin{pmatrix}
   a_0 & 0 &\cdots&\cdots&0\\
   a_1 & a_0&0&\cdots&0\\
   \vdots& \vdots&\ddots&\ddots&\vdots \\
   a_{n-1}& a_{n-2}&\cdots&a_0&0\\
   a_n&a_{n-1} &\cdots&a_1&a_0
\end{pmatrix}
\oplus \bigoplus_{1 \leq j\leq i-1}\begin{pmatrix}
   S_{0,\,j} & 0 &\cdots&0\\
    S_{1,\,j}  &  S_{0,\,j} &\cdots&0\\
   \vdots& \vdots&\ddots&\vdots \\
    S_{m,\,j} &  S_{m-1,\,j} &\cdots& S_{0,\,j} 
\end{pmatrix}\oplus\\
 \begin{pmatrix}
   S_{0,\,i} & 0 &\cdots&0\\
    S_{1,\,i}  &  S_{0,\,i} &\cdots&0\\
   \vdots& \vdots&\ddots&\vdots \\
    S_{m-1,\,i} &  S_{m-2,\,i} &\cdots& S_{0,\,i} 
\end{pmatrix}\oplus \bigoplus_{i<j \leq \#\operatorname{Sub}^\prime(\mathbf{s})}
\begin{pmatrix}
   S_{0,\,j} & 0 &\cdots&0\\
    S_{1,\,j}  &  S_{0,\,j} &\cdots&0\\
   \vdots& \vdots&\ddots&\vdots \\
    S_{m-1,\,j} &  S_{m-2,\,j} &\cdots& S_{0,\,j} 
\end{pmatrix}\\
=
\begin{pmatrix}
   a_0 & 0 &\cdots&\cdots&0\\
   a_1 & a_0&0&\cdots&0\\
   \vdots& \vdots&\ddots&\ddots&\vdots \\
   a_{n-1}& a_{n-2}&\cdots&a_0&0\\
   a_n&a_{n-1} &\cdots&a_1&a_0
\end{pmatrix}\oplus \bigoplus_{1 \leq j\leq i-1}\begin{pmatrix}
   S_{0,\,j} & 0 &\cdots&0\\
    S_{1,\,j}  &  S_{0,\,j} &\cdots&0\\
   \vdots& \vdots&\ddots&\vdots \\
    S_{m,\,j} &  S_{m-1,\,j} &\cdots& S_{0,\,j} 
\end{pmatrix}\\
 \oplus \bigoplus_{i-1<j \leq \#\operatorname{Sub}^\prime(\mathbf{s})}
\begin{pmatrix}
   S_{0,\,j} & 0 &\cdots&0\\
    S_{1,\,j}  &  S_{0,\,j} &\cdots&0\\
   \vdots& \vdots&\ddots&\vdots \\
    S_{m-1,\,j} &  S_{m-2,\,j} &\cdots& S_{0,\,j} 
\end{pmatrix}

\end{align}
\end{small}
for $m \geq1$.

The variety $G_{i,\,m}$ is isomorphic to 
\begin{equation}\operatorname{Spec}\overline{\mathbb{F}_q(t)}[a_0,\,a_0^{-1},\,a_1,\,\dots,\,a_n ,\,x_{0,\,1},\,\dots,\,x_{m,\,i}].
\end{equation}
Hence the it is smooth and irreduceible.

The key point of the proof of Theorem \ref{MainB} is to show
that, under the conditions of Theorem \ref{MainB}, the $t$-motivic Galois group of pre-$t$-motive $M(i,\,m)$ is equal to  $G_{i,\,m}$ explicitly defined above.
In what follows, our central target is to prove this equality, and we first confirm the following inclusion.

\begin{Proposition}\label{iota}
Take an index $\mathbf{s}=(s_1,\,\dots,\,s_r)$ so that positive integers $s_1,\,\dots,\,s_r$ are pairwise distinct and fix the enumeration $\mathbf{s}^{[1]},\,\mathbf{s}^{[2]},\,\dots,\,\mathbf{s}^{[\# \operatorname{Sub}^\prime(\mathbf{s})]}$ of the set $\operatorname{Sub}^\prime(\mathbf{s})$.
Also take polynomials $u_1,\,\dots,\,u_r$ in $\overline{K}[t]$ such that we have
\begin{equation}
        ||u_i||_\infty<|\theta|_\infty^{\frac{s_i q}{q-1}},\quad
     \text{($1 \leq i \leq r$)}.
\end{equation}
For $1 \leq i \leq  r$ and $0 \leq m \leq n$, we let $M(i,\,m)$ be the pre-$t$-motive defined in Definition \ref{mainpretmotive} and $G_{i,\,m}$ be the algebraic variety in Definition \ref{explicitform}.
Then the $t$-motivic Galois group $\Gamma_{M(i,\,m)}$ is isomorphic to a closed subscheme over $\overline{\mathbb{F}_q(t)}$ of $  G_{i,\,m}$ for any $1 \leq i \leq \#\operatorname{Sub}^\prime(\mathbf{s})$ and $0 \leq m \leq n$.
\end{Proposition}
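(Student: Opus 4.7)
The plan is to follow the same strategy as in Proposition \ref{firststeponesideinclusion}: compute $\widetilde{\Psi}(i,m):=\Psi(i,m)_1^{-1}\Psi(i,m)_2$ explicitly block by block and verify that this matrix is an $\mathbb{L}\otimes_{\overline{K}(t)}\mathbb{L}$-valued point of $G_{i,m}$. Since $\Gamma_{\Psi(i,m)}$ is characterized by equation \eqref{GammaPsi} as the smallest closed subscheme of $\operatorname{GL}_N$ having $\widetilde{\Psi}(i,m)$ as an $\mathbb{L}\otimes_{\overline{K}(t)}\mathbb{L}$-valued point, such a verification yields $\Gamma_{\Psi(i,m)}\subset G_{i,m}$. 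The identification $\Gamma_{M(i,m)}\simeq \Gamma_{\Psi(i,m)}$ from Theorem \ref{ThmPapanikolas} then completes the proof.

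Since $\Psi(i,m)$ is block diagonal as in equation \eqref{Psiim}, the computation of $\widetilde{\Psi}(i,m)$ reduces to analyzing each block separately. For the first block $\rho_n(\Omega)$, the key point is that $\rho_n$ is an algebra homomorphism (as recalled before Example \ref{Maurischatresult}), hence $(\rho_n\Omega)^{-1}=\rho_n(\Omega^{-1})$. A direct multiplication shows that the $(k,\ell)$-entry (with $k\geq \ell$) of $(\rho_n\Omega)_1^{-1}(\rho_n\Omega)_2$ equals
\begin{equation}
\sum_{r+s=k-\ell}\partial^{(r)}(\Omega^{-1})\otimes \partial^{(s)}(\Omega),
\end{equation}
which is a lower triangular Toeplitz matrix; setting $a_k$ equal to this sum matches the first block of \eqref{elementsofGim}.

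For each block of shape $\rho_{m'}\Psi_j$ (where $m'=m$ or $m-1$), the same algebra-homomorphism property gives $(\rho_{m'}\Psi_j)_1^{-1}(\rho_{m'}\Psi_j)_2=\rho_{m'}\!\bigl(\widetilde{\Psi_j}\bigr)$ after identifying scalar matrices with their images under $\rho_{m'}$ and expanding. Therefore the $(k,\ell)$-block has entries which are sums $\sum_{r+s=k-\ell}\partial^{(r)}(\Psi_j^{-1})\otimes\partial^{(s)}(\Psi_j)$ of $(p,q)$-entries. Using the explicit formula recorded in the remark after \eqref{PsiUS} for the entries of $\widetilde{\Psi_j}$, together with the Leibniz rule \eqref{Leibniz}, one recognises each such entry as the value of the polynomial $\underline{D}^{(k-\ell)}_{s_{j_q^\prime}+\cdots+s_{j_{d^\prime}^\prime}(+1)}$ at the specialisation
\begin{equation}
a_i \longmapsto \sum_{r+s=i}\partial^{(r)}(\Omega^{-1})\otimes\partial^{(s)}(\Omega),\qquad x_{i,\mathbf{s}^\prime}\longmapsto \sum_{r+s=i}\partial^{(r)}(\Omega^{-\operatorname{wt}(\mathbf{s}^\prime)})\otimes\partial^{(s)}\bigl(\Omega^{\operatorname{wt}(\mathbf{s}^\prime)}\mathcal{L}_{\mathbf{u}^\prime,\mathbf{s}^\prime}\bigr).
\end{equation}
This is precisely the form of $S_{k-\ell,\,j}$ in Definition \ref{Smj}, so each block lies in the prescribed shape of $G_{i,m}$.

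The main obstacle is the bookkeeping in the second step: the explicit formula for entries of $\widetilde{\Psi_j}$ already involves alternating sums over chains $n=k_0<k_1<\cdots<k_m=i$, and one must show that applying $\rho_{m'}$ (i.e. taking hyperderivatives of each factor) and expanding by Leibniz produces exactly the expression built from $\underline{D}^{(\ast)}$. Once this combinatorial identification is established in full generality, the inclusion $\Gamma_{M(i,m)}\hookrightarrow G_{i,m}$ follows uniformly for every pair $(i,m)$, with no need for an induction argument at this stage.
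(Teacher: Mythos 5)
Your proposal follows the same route as the paper: compute $\widetilde{\Psi}(i,m)$ blockwise, exhibit it as an $\mathbb{L}\otimes_{\overline{K}(t)}\mathbb{L}$-valued point of $G_{i,m}$, and invoke the minimality of $\Gamma_{\Psi(i,m)}$ together with Theorem \ref{ThmPapanikolas}. Two remarks. First, your explicit specialisation of the $x$-variables is wrong as written: since the diagonal forces $a_0=\Omega^{-1}\otimes\Omega$, the value of $x_{0,\mathbf{s}^\prime}$ is forced to be $a_0^{-\operatorname{wt}(\mathbf{s}^\prime)}$ times the corresponding off-diagonal entry of $\widetilde{\Psi_j}$, which is the alternating sum over chains recorded in the Remark after \eqref{PsiUS} (already in depth one it is $-\Omega^{s}\mathcal{L}\otimes 1+\Omega^{s}\otimes\mathcal{L}$ up to the diagonal factor, not $\Omega^{-s}\otimes\Omega^{s}\mathcal{L}$); the higher $x_{i,\mathbf{s}^\prime}$ are then $\partial^{(i)}$ of that expression. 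Second, the ``combinatorial bookkeeping'' you flag as the main obstacle is exactly what the paper dissolves by extending $\partial^{(m)}$ to $\mathbb{L}\otimes_{\overline{K}(t)}\mathbb{L}$ via $\partial^{(m)}(f\otimes g)=\sum_{m^\prime}\partial^{(m^\prime)}f\otimes\partial^{(m-m^\prime)}g$ and verifying the Leibniz rule there: this makes $\rho_{m^\prime}$ a ring homomorphism compatible with $(\cdot)\otimes 1$ and $1\otimes(\cdot)$, so $\widetilde{\rho_{m^\prime}\Psi_j}=\rho_{m^\prime}(\widetilde{\Psi_j})$ and the identification of each block entry with an evaluated $\underline{D}^{(\ast)}$ becomes a formal consequence of $\underline{D}_s^{(m)}(\partial^{(\bullet)}f_\bullet)=\partial^{(m)}(f_1\cdots f_s)$, with no chain-by-chain computation needed. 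With those two repairs your argument coincides with the paper's proof.
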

We write $\iota :\Gamma_{M(i,\,m)} \hookrightarrow G_{i,\,m}$ for the closed immersion given by the proposition above. 
\begin{proof}
Note first that the hyperderivative $\partial^{(m)}$ on $\mathbb{L}$ can be extended to $\mathbb{L} \otimes_{\overline{K}(t)} \mathbb{L}$ as follows:
\begin{equation}
    \partial^{(m)}(f\otimes g):=\sum_{m^\prime=0}^m \partial^{(m^\prime)}f \otimes \partial^{(m-m^\prime)}g, \quad(f,\,g\in \mathbb{L}),
\end{equation}
that is, we have $\partial^{(m)}f\otimes1=\partial^{(m)}(f\otimes 1)$ and $1 \otimes \partial^{(m)}f=\partial^{(m)}(1 \otimes f)$, for each $m >0$. Hyperderivatives $\partial^{(0)},\,\partial^{(1)},\,\partial^{(2)},\dots$ on $\mathbb{L} \otimes_{\overline{K}(t)} \mathbb{L}$ also satisfy Leibniz rule \eqref{Leibniz}. Indeed, for any $r,\,m \geq 0$ and $f_1,\,\dots,\,f_r,\,g_1,\,\dots,\,g_r \in \mathbb{L}$, we have
\begin{align}
    & \quad \ \ \partial^{(m)}\Big( (f_1\otimes g_1)\cdots(f_r\otimes g_r) \Big)=\partial^{(m)}(f_1\cdots f_r\otimes g_1 \cdots g_r)\\
    &=\sum_{m^\prime=0}^{m}
\Big(\partial^{(m^\prime)}(f_1\cdots f_r)\otimes\partial^{(m-m^\prime)} (g_1 \cdots g_r)\Big)\\
&=\sum_{m^\prime=0}^{m}\left(\sum_{j_1+\cdots+j_r=m^\prime}\partial^{(j_1)}f_1\cdots\partial^{(j_r)}f_r \right)\otimes\left(\sum_{j_1^\prime+\cdots+j_r^\prime=m-m^\prime}\partial^{(j_1^\prime)}g_1\cdots\partial^{(j_r^\prime)}g_r \right)\\
&=\sum_{j_1+\cdots+j_r+j_1^\prime+\cdots+j_r^\prime=m}(\partial^{(j_1)}f_1\cdots\partial^{(j_r)}f_r)\otimes(\partial^{(j_1^\prime)}g_1\cdots\partial^{(j_r^\prime)}g_r)\\
&=\sum_{j_1+\cdots+j_r+j_1^\prime+\cdots+j_r^\prime=m}(\partial^{(j_1)}f_1 \otimes \partial^{(j_1^\prime)}g_1)\cdots (\partial^{(j_r)}f_r \otimes \partial^{(j_r^\prime)}g_r)\\
&=\sum_{m_1+\cdots+m_r=m}\left(\sum_{\substack{j_1+j_1^\prime=m_1, \dots, j_r+j_r^\prime=m_r}}(\partial^{(j_1)}f_1 \otimes \partial^{(j_1^\prime)}g_1)\cdots (\partial^{(j_r)}f_r \otimes \partial^{(j_r^\prime)}g_r)\right)\\
&=\sum_{m_1+\cdots+m_r=m}\partial^{(m_1)}(f_1\otimes g_1)\cdots \partial^{(m_r)}(f_1 \otimes g_r).
\end{align}

It follows that such as Section \ref{subsectionprolongation}, we  obtain an $\mathbb{L} \otimes_{\overline{K}(t)} \mathbb{L}$-algebra homomorphism
\begin{equation}
    \rho_{m}:\operatorname{Mat}_{d\times d}\left(\mathbb{L} \otimes_{\overline{K}(t)} \mathbb{L}\right) \rightarrow \operatorname{Mat}_{d(m+1)\times d(m+1)}\left(\mathbb{L} \otimes_{\overline{K}(t)} \mathbb{L}\right)
\end{equation}
     by putting
\begin{equation}
    \rho_m(X):=
    \begin{pmatrix}
   X & 0 &\cdots&\cdots&0\\
   \partial^{(1)}X & X&0&\cdots&0\\
   \vdots& \vdots&\ddots&&\vdots \\
   \partial^{(m-1)}X & \partial^{(m-2)}X&\cdots&X&0\\
   \partial^{(m)}X &\partial^{(m-1)}X &\cdots&\partial^{(1)}X &X
\end{pmatrix}.
\end{equation}
for $d \geq 1$ and $m \geq 0$.

We recall that the matrix $\Psi(i,\,m)$ in equation \eqref{Psiim} is a rigid analytic trivialization of a defining matrix of the pre-$t$-motive $M(i,\,m)$.
Recalling the notation $\widetilde{\Psi}$ defined in~\eqref{eqtilde}, we can see that

\begin{equation}
    \widetilde{\Psi(i,\,m)}=
    \begin{cases}
\widetilde{\rho_n \left((\Omega)\right) }\oplus\displaystyle{ \bigoplus_{j\leq i}\widetilde{\Psi}_j }& \text{if $m=0$,}\\
\widetilde{{\rho_n \left((\Omega)\right)}} \oplus \displaystyle{\bigoplus_{j\leq i} \widetilde{\rho_m (\Psi_j)} \oplus \bigoplus_{i<j \leq \#\operatorname{Sub}^\prime(\mathbf{s})}\widetilde{\rho_{m-1}\left(\Psi_j\right)}}& \text{if $m \geq 1$},
\end{cases}
\end{equation}
is equal to
\begin{equation}
    \begin{cases}
\rho_n \left(\widetilde{(\Omega)} \right)\oplus \displaystyle{\bigoplus_{j\leq i}\widetilde{\Psi}_j} & \text{if $m=0$,}\\
\rho_n \left(\widetilde{(\Omega)} \right)\oplus \displaystyle{\bigoplus_{j\leq i} \rho_m \left(\widetilde{\Psi}_j \right)\oplus \bigoplus_{i<j \leq \#\operatorname{Sub}^\prime(\mathbf{s})}\rho_{m-1}\left(\widetilde{\Psi}_j\right)}& \text{if $m \geq 1$}
\end{cases}
\end{equation}
since $\rho_m$ is ring homomorphism for each $m \geq 0$.
Hence we can conclude that $\widetilde{\Psi(i,\,m)}$ is an element of $G_{i,\,m}(\mathbb{L}\otimes_{\overline{K}(t)}\mathbb{L})$ by substituting $a_{n^\prime}$ by $\partial^{(n^\prime)}(\Omega^{-1}\otimes \Omega)$ and $x_{n^\prime,\,\mathbf{s}^{[j]}}$ by
\begin{equation}
\partial^{(n^\prime)}\left(\sum_{n=1}^{d+1}\sum_{m=0}^{d+1-n}(-1)^m \sum_{\substack{n=k_0<k_1<\cdots\\
    \cdots<k_{m-1}<k_m=d+1}}L_{k_1,\,k_0}^{[j]}L_{k_2,\,k_1}^{[j]}\cdots L_{k_m,\,k_m-1}^{[j]}\otimes \Omega^{s_1+\cdots+s_d}L_{n,\,1}^{[j]}\right),
\end{equation}
for $1 \leq j \leq \#I$. Here $L_{k,\,k^\prime}^{[j]}$ (depending on $\mathbf{s}^{[j]}$) denotes the series \begin{equation}
\mathcal{L}_{(u_{j_{k^\prime}},\,u_{j_{k^\prime+1}},\,\dots,\,u_{k-1}),\,(s_{j_{k^\prime}},\,s_{j_{k^\prime+1}},\,\dots,\,s_{j_{k-1}})}
\end{equation}
for $1\leq k^\prime< k\leq d+1$, under the notation $\mathbf{s}=(s_1,\,\dots,\,s_r)$, $1 \leq j_1<\cdots<j_d\leq r$, and $\mathbf{s}^{[j]}=(s_{j_1},\,\dots,\,s_{j_d})\in \operatorname{Sub}^{\prime}(\mathbf{s})$. Hence we further have $\Gamma_{\Psi(i,\,m)} \subset G_{i,\,m}$ by the definition (see equation \eqref{GammaPsi}). As Theorem \ref{ThmPapanikolas} shows $\Gamma_{\Psi(i,\,m)} \simeq \Gamma_{M(i,\,m)}$, the $t$-motivic Galois group $\Gamma_{M(i,\,m)}$ is isomorphic to a closed subscheme of $G_{i,\,m}$.

\end{proof}

The dimension of the variety $G_{i,\,m}$ is \begin{equation}
        n+1+(m+1)i+m(\#\operatorname{Sub}^\prime(\mathbf{s})-i).\label{dimGim}
    \end{equation} 
Hence we have 
\begin{equation}\label{dimGimsucsessor}
    \dim G_{i,\,m}=\dim G_{i,\,m}^\prime+1
\end{equation}

We define the subvariety $V_{i,\,m} \subset G_{i,\,m}$ to be the preimage of the identity element via the morphisms $\Pi_{i,\,m}:G_{i,\,m}\twoheadrightarrow G_{i,\,m}^\prime$ (see Definition \ref{explicitform}).
This is a closed subgroup of $\operatorname{GL}_{N_{n,\,m,\,j}}$ and isomorphic to the additive group $\mathbb{G}_a$ as $V_{i,\,m}$ is the algebraic subset consisting of the matrices of the form
\begin{small}
    \begin{equation} \label{vectorpart}
        I_{n+1}\oplus \bigoplus_{1\leq j <i} I_{(m+1)(\dep \mathbf{s}^{[j]}+1)}\oplus
        \begin{pmatrix}
        1&0 & \cdots& 0 \\
        0& \ddots&&\\
        0&&\ddots&\\
        x_{m,\,\mathbf{s}_j}& 0&\cdots&1
    \end{pmatrix}\oplus \bigoplus_{i<j\leq \#\operatorname{Sub}^\prime(\mathbf{s})}I_{(m)(\dep \mathbf{s}^{[j]}+1)}.
    \end{equation}
    \end{small}

\begin{Lemma}\label{LemmaVinclusion}
    The algebraic group $\mathcal{V}_{i,\,m}$ in Definition \ref{mathcalVim} is a closed subgroup of $V_{i,\,m}$ for $(i,\,m)\neq (1,\,0)$.
\end{Lemma}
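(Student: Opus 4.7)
The plan is to reduce the claim to the commutativity of the square
\[
\Pi_{i,\,m}\circ \iota \;=\; \iota^\prime \circ \varphi_{i,\,m},
\]
where $\iota:\Gamma_{M(i,\,m)}\hookrightarrow G_{i,\,m}$ and $\iota^\prime:\Gamma_{M^\prime(i,\,m)}\hookrightarrow G^\prime_{i,\,m}$ are the closed immersions supplied by Proposition \ref{iota} (the latter applied to the pair indexing $M^\prime(i,\,m)$ via \eqref{explainMprime} and \eqref{explainGprime1}). Granted this identity, a one-line diagram chase finishes the lemma: any $\gamma\in \mathcal{V}_{i,\,m}=\Ker \varphi_{i,\,m}$ satisfies $\Pi_{i,\,m}(\iota(\gamma))=\iota^\prime(\varphi_{i,\,m}(\gamma))=1$, so $\iota(\gamma)$ lies in $\Pi_{i,\,m}^{-1}(1)=V_{i,\,m}$, which is precisely the inclusion $\mathcal{V}_{i,\,m}\subset V_{i,\,m}$ to be proved.

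The technical heart is therefore the commutativity of that square, which I would deduce from Lemma \ref{LemmaTannakianProj} applied to the pair $(M(i,\,m),\,M^\prime(i,\,m))$: because $\langle M^\prime(i,\,m)\rangle$ sits as a full Tannakian subcategory of $\langle M(i,\,m)\rangle$, the Tannakian surjection $\varphi_{i,\,m}$ of Definition \ref{mathcalVimmorph} agrees, under the identifications $\Gamma_{M(i,\,m)}=\Gamma_{\Psi(i,\,m)}$ and $\Gamma_{M^\prime(i,\,m)}=\Gamma_{\Psi^\prime(i,\,m)}$ provided by Theorem \ref{ThmPapanikolas}, with the map induced by restricting $\widetilde{\Psi(i,\,m)}$ to the sub-block that represents $\widetilde{\Psi^\prime(i,\,m)}$. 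Comparing the block decomposition of $\Psi(i,\,m)$ in \eqref{Psiim} with the explicit shape of elements of $G_{i,\,m}$ in Definition \ref{explicitform} and of the projection $\Pi_{i,\,m}$ in \eqref{elementsofGim2}, this restriction is precisely what $\Pi_{i,\,m}$ computes: for $i>1$ it discards the bottom block-row and block-column inside the $\rho_m (\Psi_i)$-factor, so that $\rho_m(\Psi_i)$ becomes $\rho_{m-1}(\Psi_i)$, while for $i=1$ it discards the bottom row both of $\rho_n\bigl((\Omega)\bigr)$ and of every $\rho_m(\Psi_j)$ simultaneously. In either case the identity $\Pi_{i,\,m}\bigl(\widetilde{\Psi(i,\,m)}\bigr)=\widetilde{\Psi^\prime(i,\,m)}$ is immediate from the fact that $\rho_m$ is a ring homomorphism compatible with the extended hyperderivatives on $\mathbb{L}\otimes_{\overline{K}(t)}\mathbb{L}$, and the characterization \eqref{GammaPsi} of $\Gamma_{\Psi(i,\,m)}$ as the smallest closed subscheme of $\operatorname{GL}_N$ having $\widetilde{\Psi(i,\,m)}$ as an $\mathbb{L}\otimes_{\overline{K}(t)}\mathbb{L}$-valued point then forces $\Pi_{i,\,m}$ to carry $\Gamma_{\Psi(i,\,m)}$ into $\Gamma_{\Psi^\prime(i,\,m)}$ and to realize $\varphi_{i,\,m}$ there.

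The main obstacle I anticipate is bookkeeping rather than genuine difficulty: one must align the case split between $i>1$ and $i=1$ in the definition of $\Pi_{i,\,m}$ with the parallel case split in \eqref{explainMprime}, and check in each case that the polynomials $\underline{D}_{s}^{(m)}$ assembling the blocks $S_{m,\,j}$ of Definition \ref{Smj} reproduce exactly the hyperderivatives that $\rho_m$ introduces when applied to $\Psi_j$. Once those explicit identifications are recorded, no further input beyond Lemma \ref{LemmaTannakianProj} and the minimality characterization \eqref{GammaPsi} of $\Gamma_{\Psi(i,\,m)}$ is required, and the inclusion $\mathcal{V}_{i,\,m}\subset V_{i,\,m}$ follows from the diagram chase above.
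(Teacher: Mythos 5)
Your proposal is correct and follows essentially the same route as the paper: the paper's proof of Lemma \ref{LemmaVinclusion} consists precisely of invoking Lemma \ref{LemmaTannakianProj} to obtain the commutative square $\Pi_{i,\,m}\circ\iota=\iota\circ\varphi_{i,\,m}$ and then observing that the kernel $\mathcal{V}_{i,\,m}$ of $\varphi_{i,\,m}$ must land in $\Pi_{i,\,m}^{-1}(1)=V_{i,\,m}$. One minor bookkeeping slip in your description of the $i=1$ case: since the first factor of $M(i,\,m)$ is always $\rho_n C$ (independent of $m$), the projection $\Pi_{1,\,m}$ leaves the $\rho_n\bigl((\Omega)\bigr)$-block untouched and only drops the $M[1]$-block from level $m$ to level $m-1$, all other $M[j]$-blocks already being at level $m-1$; this does not affect the validity of your argument.
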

\begin{proof}
Lemma \ref{LemmaTannakianProj} shows the commutativity of the following diagram:
\begin{center}
\begin{tikzpicture}[auto]
\node (03) at (0, 3) {$\Gamma_{M(i,\,m)}$}; \node (33) at (3, 3) {$\Gamma_{M^\prime(i,\,m)}$}; 
\node (00) at (0, 0) {$G_{i,\,m}$}; \node (30) at (3, 0) {$G_{i,\,m}^\prime$,};

\draw[->>] (03) to node {$\varphi_{i,\,m}$}(33);
\draw[->>] (00) to node {$\Pi_{i,\,m}$}(30);

\draw[{Hooks[right]}->] (03) to node {$\iota$} (00);
\draw[{Hooks[right]}->] (33) to node {$\iota$} (30);

\end{tikzpicture}.

\end{center}
which shows that the algebraic group $\mathcal{V}_{i,\,m}$ is contained in $V_{i,\,m}\simeq \mathbb{G}_a$.
\end{proof}

\subsection{Determination of the $t$-motivic Galois groups}\label{sectionproof2}
Let us take an index $\mathbf{s}=(s_1,\,\dots,\,s_r)\in \mathbb{Z}_{>0}^r$ and polyomials $u_1,\,\dots,\,u_r \in \overline{K}[t]$ such that
\begin{equation}        ||u_i||_\infty<|\theta|_\infty^{\frac{s_i q}{q-1}}
    \end{equation}
hold for all $1 \leq i \leq r$.
We enumerate the set
\begin{equation}
\operatorname{Sub}^\prime(\mathbf{s}):= \{ (s_{i_1},\,s_{i_2},\,\dots,\,s_{i_d}) \mid 1\leq d \leq r,\, 1\leq i_1<\cdots<i_d\leq r \} 
\end{equation}
as $\mathbf{s}^{[1]},\,\mathbf{s}^{[2]},\,\dots,\,\mathbf{s}^{[\#\operatorname{Sub}^\prime(\mathbf{s})]}$ so that $\operatorname{dep}(\mathbf{s}^{[i]})\leq\operatorname{dep}(\mathbf{s}^{[i+1]})$ for all $1 \leq i \leq \#\operatorname{Sub}^\prime(\mathbf{s})-1$. 
We also choose a non-negative integer $n$.
Our aim in this subsection is to verify the equality
\begin{equation}\label{localintro}
    \Gamma_{M(i,\,m)}=G_{i,\,m}
\end{equation}
for any $1 \leq i \leq \#\operatorname{Sub}^\prime(\mathbf{s})$ and $0 \leq m \leq n$ in the case where we have $$\mathcal{L}_{(u_i),\,(s_i)}(t)|_{t=\theta}\neq 0$$ for $1 \leq i \leq r$ and the index $(s_1,\,\dots,\,s_r)\in \mathbb{Z}_{>0}^r$ is chosen so that $p \nmid s_i,\,(q-1) \nmid s_i$ for $1 \leq i \leq r$ and $s_1,\,\dots,\,s_r$ are distinct.
Equality \eqref{localintro} will contribute to the proof of Theorem \ref{MainB}, which is one of our main goals of this chapter.
As we have proved in Proposition \ref{PropMainBfirststep} that $\Gamma_{M(i,\,0)}=G_{i,\,0}$ for $0 \leq i \leq r$,  we next prove the desired equality for the case where at least one of the conditions $m\geq 1$ or $i \geq r+1$ holds.

The proof of Equation \eqref{localintro} will be done by induction on tuples $(i,\,m)$ where the order is given by
\begin{equation}
    (i_1,\,m_1) \geq (i_2 ,\,m_2) \text{ if ``$m_1> m_2$'' or ``$m_1=m_2$ and $i_1 \geq i_2$''}.
\end{equation}
We recall that the symbols $M^\prime(i,\,m)$ and $G_{i,\,m}^\prime$ are respectively defined in Equations \eqref{explainMprime} and \eqref{explainGprime1} for $(i,\,m) \neq (1,\,0)$. 
Note that what we have to show is that the equation $\Gamma_{M^\prime(i,\,m)}=G_{i,\,m}^\prime$ yields $\Gamma_{M(i,\,m)}=G_{i,\,m}$ if we have $m\geq 1$ or $i \geq r+1$ because of Proposition \ref{PropMainBfirststep}.
The steps of induction can be grouped to two arrays. The one array consists of steps with $r+1 \leq i \leq 2^r-1$, which are dealt with in Proposition \ref{Propcasehigherdepth} and correspond to Example  
\ref{simplestexample1}.
The other one consists of steps with $m \geq 1$ and $1 \leq i \leq r$. These steps are handled in Proposition \ref{Propcasedep1} and correspond to Example \ref{simplestexample2}.

\begin{Proposition}\label{Propcasehigherdepth}
    We take an index $\mathbf{s}=(s_1,\,\dots,\,s_r)\in \mathbb{Z}_{>0}^r$ such that $(q-1) \nmid s_i$ for $1 \leq i \leq r$ and $s_i /s_{j^\prime} \neq p^{\mathbb{Z}}$ for $1 \leq j <j^\prime \leq r$.  
    Take also polynomials $u_1,\,\dots,\,u_r \in \overline{K}[t]$ such that $\mathcal{L}_{(u_i),\,(s_i)}(t)|_{t=\theta}\neq 0$ and
    \begin{equation}        ||u_i||_\infty<|\theta|_\infty^{\frac{s_i q}{q-1}}
    \end{equation}
    for all $1 \leq i \leq r$. If we have $\Gamma_{M^\prime(i,\,m)}=G^\prime_{i,\,m}$ for some $r+1 \leq i \leq 2^r-1$ and $0 \leq m \leq n$, then the equality $\Gamma_{M(i,\,m)}=G_{i,\,m}$ also holds.
\end{Proposition}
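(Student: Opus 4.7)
The approach is a commutator argument generalizing Example \ref{simplestexample1} to the depth $d^\prime := \dep(\mathbf{s}^{[i]}) \geq 2$ setting. By Proposition \ref{iota} there is a closed immersion $\Gamma_{M(i,m)} \hookrightarrow G_{i,m}$, and by Lemma \ref{LemmaVinclusion} the kernel $\mathcal{V}_{i,m}$ of $\varphi_{i,m}\colon \Gamma_{M(i,m)} \twoheadrightarrow \Gamma_{M^\prime(i,m)}$ is contained in $V_{i,m} \simeq \mathbb{G}_a$. Since $G_{i,m}$ is smooth and irreducible and $\dim G_{i,m} = \dim G^\prime_{i,m} + 1$ by equation \eqref{dimGimsucsessor}, the hypothesis $\Gamma_{M^\prime(i,m)} = G^\prime_{i,m}$ reduces the problem to showing that $\mathcal{V}_{i,m}$ is non-trivial. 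Concretely, we shall produce, for every $b \in \overline{\mathbb{F}_q(t)}$, an element of $\mathcal{V}_{i,m}(\overline{\mathbb{F}_q(t)})$ whose $V_{i,m}$-coordinate $x_{m,\mathbf{s}^{[i]}}$ equals $b$.

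Write $\mathbf{s}^{[i]} = (s_{j_1},\ldots,s_{j_{d^\prime}})$ and pick $\alpha,\beta$ with $\mathbf{s}^{[\alpha]} = (s_{j_1},\ldots,s_{j_{d^\prime-1}})$ and $\mathbf{s}^{[\beta]} = (s_{j_{d^\prime}})$; both have depth strictly less than $d^\prime$, so the depth-increasing enumeration forces $\alpha, \beta \leq i-1$. For a given $b$, let $Q^\prime_b \in G^\prime_{i,m} = \Gamma_{M^\prime(i,m)}$ be the element determined by $a_0 = 1$, $a_k = 0$ for $k \geq 1$, $x_{m,\mathbf{s}^{[\alpha]}} = b$, and all other $x$-coordinates zero; let $R^\prime \in G^\prime_{i,m}$ be the analogous element with $x_{0,\mathbf{s}^{[\beta]}} = 1$. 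The central combinatorial claim is that $[Q^\prime_b, R^\prime] = 1$ in $G^\prime_{i,m}$: the element $Q^\prime_b$ acts non-trivially on the block $B_j$ exactly when $\mathbf{s}^{[\alpha]}$ is a contiguous substring of $\mathbf{s}^{[j]}$, and $R^\prime$ acts non-trivially on $B_j$ exactly when $s_{j_{d^\prime}} \in \mathbf{s}^{[j]}$; if both held for some $j \leq i-1$, then $\mathbf{s}^{[j]}$ would contain $\mathbf{s}^{[\alpha]}$ contiguously followed by the entry $s_{j_{d^\prime}}$, forcing $\dep(\mathbf{s}^{[j]}) \geq d^\prime$ and, by the enumeration, $j = i$ or $j > i$, contradicting $j \leq i-1$. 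Hence on every block with $j \leq i-1$ the supports of $Q^\prime_b$ and $R^\prime$ are disjoint and the block-wise commutator is trivial.

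Using the surjectivity of $\varphi_{i,m}$ (Definition \ref{mathcalVimmorph}) we lift $Q^\prime_b, R^\prime$ to $Q_b, R \in \Gamma_{M(i,m)}$, so that $R Q_b R^{-1} Q_b^{-1}$ lies in $\mathcal{V}_{i,m}$. The proof is then completed by a direct computation inside the $i$-th block $B_i$ of $M(i,m)$: the lift $Q_b$ is forced to have $B_i$-component $I + U_Q$, where $U_Q$ carries the value $b$ at position $(d^\prime, 1)$ of $S_{m,i}$ (from $x_{m,\mathbf{s}^{[\alpha]}}$) and an undetermined value $y_Q$ at position $(d^\prime+1, 1)$ of $S_{m,i}$; the lift $R$ has $B_i$-component $I + U_R$, where $U_R$ carries a $1$ at each $(d^\prime+1, d^\prime)$ position of the $(m+1)$ diagonal copies of $S_{0,i}$ (from $x_{0,\mathbf{s}^{[\beta]}}$) and an undetermined $y_R$ at the same $(d^\prime+1, 1)$ slot of $S_{m,i}$. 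A short nilpotent matrix calculation shows $U_Q U_R = 0$ while $U_R U_Q$ contributes precisely $b$ at the $(d^\prime+1, 1)$ entry of $S_{m,i}$, with all higher-order products vanishing; hence $R Q_b R^{-1} Q_b^{-1}$ restricted to $B_i$ is the identity with a single $b$ added at the $(d^\prime+1,1)$ entry of $S_{m,i}$, i.e.\ exactly the point of $V_{i,m}$ with coordinate $b$. Since $b$ was arbitrary, $\mathcal{V}_{i,m} = V_{i,m}$, and a dimension count combined with the smoothness and irreducibility of $G_{i,m}$ yields $\Gamma_{M(i,m)} = G_{i,m}$. The main obstacle is the combinatorial support-disjointness step together with the bookkeeping needed to identify which $x$-variables appear at which positions of the prolongation $S_{m,i}$ in the lifts; the ensuing nilpotent matrix algebra is then routine.
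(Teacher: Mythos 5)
Your proposal is correct and follows essentially the same strategy as the paper's own proof: lift two suitably chosen elements from $\Gamma_{M^\prime(i,\,m)}=G^\prime_{i,\,m}$, check via the block-support/depth argument that they interact only in the $i$-th block, and compute the commutator there to show $\mathcal{V}_{i,\,m}=V_{i,\,m}\simeq\mathbb{G}_a$, after which the dimension count and smoothness/irreducibility of $G_{i,\,m}$ finish the argument. The only (inessential) difference is a mirror-image choice of generators --- the paper uses $x_{0,\,(s_{i_1})}$ together with $x_{m,\,(s_{i_2},\dots,s_{i_d})}$, whereas you use $x_{m,\,(s_{j_1},\dots,s_{j_{d^\prime-1}})}$ together with $x_{0,\,(s_{j_{d^\prime}})}$ --- and the same nilpotent computation goes through in either case.
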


\begin{proof}

We note that our arguments are similar to those in Example \ref{simplestexample1}.
We recall that we have an exact sequence
\begin{equation}\label{exactseqinduction}
    1 \rightarrow \mathcal{V}_{i,\,m} \rightarrow \Gamma_{M(i,\,m)} \xrightarrow{\varphi_{i.\,m}} \Gamma_{M^\prime(i,\,m)} \rightarrow 1,
\end{equation}
see Definitions \ref{mathcalVimmorph} and \ref{mathcalVim}.
Our first goal is to show that the dimension of $\mathcal{V}_{i,\,m}$ is $1$. Similarly to the arguments in Example \ref{simplestexample1}, we will achieve this by choosing appropriate matrices $R$ and $Q_b$ from $\Gamma_{M(i,\,m)}$ and calculating their commutator.

Based on the assumption $\Gamma_{M^\prime(i,\,m)}=G^\prime_{i,\,m}$, we can take $R^\prime \in \Gamma_{M^\prime(i,\,m)}( \overline{\mathbb{F}_q(t)})$ given by $a_0=1, a_1=\cdots=a_n=0$, $x_{m,\,(s_{i_2},\dots,\,s_{i_d})}=1$, and $x_{m^\prime,\,\mathbf{s}^{j}} =0$ for other $m^\prime$ and $1 \leq j \leq 2^r-1$ in the notation of Definition \ref{explicitform}. Then we take $R$ to be any element in the preimage  $\varphi_{i,\,m}^{-1}(R^\prime)\subset \Gamma_{M(i,\,m)}(\overline{\mathbb{F}_q(t)})$. 
We recall the notation given in  Subsection \ref{subsectionexplicit}.
 
Then the following value
\begin{equation}
    \underline{D}_{s+1}^{(m^\prime)}
\begin{pmatrix}
   a_0 &a_1& \cdots& a_{m^\prime}\\
   \vdots&\vdots &\vdots \\
   a_0 &a_1& \cdots& a_{m^\prime}\\
   x_{0,\,\mathbf{s}^{[j]}} & x_{1,\,\mathbf{s}^{[j]}} & \cdots& x_{m^\prime,\,\mathbf{s}^{[j]}}
   \end{pmatrix}
   :=\sum_{\substack{l_1,\,\dots,\,l_s,\,l^\prime \geq0\\l_1+\dots+l_s+l^\prime=m^\prime}}a_{l_1}\cdots a_{l_s}x_{l^\prime,\,\mathbf{s}^{[j]}}
\end{equation}
for $1 \leq j \leq 2^r-1$, $s \geq 0$, and $0\leq m^\prime \leq m$, equals $1$ only if $\mathbf{s}^{[j]}=(s_{i_2},\dots,\,s_{i_d})$ and $m^\prime =m$. In other cases, it is $0$.
We also obtain 
\begin{equation}
\underline{D}_{s}^{(m^\prime)}
\begin{pmatrix}
   a_0 &a_1& \cdots& a_{m^\prime}\\
   \vdots&\vdots &\vdots &\vdots\\
   a_0 &a_1& \cdots& a_{m^\prime}
   \end{pmatrix} =\begin{cases}
       1& \text{ if } m^\prime=0;\\
       0& \text{ if } 1\leq m^\prime \leq n
   \end{cases}\label{polynomi1or0}
\end{equation}
for each $0 \leq m^\prime \leq n$ and $s \geq 1$.

Therefore, $R$ is of the form
\begin{equation}
I_{n+1}\oplus
\bigoplus_{j<i}R_j
\oplus
\begin{pmatrix}
   1 & 0 &\cdots&\cdots&0\\
   0& 1&0&\cdots&0\\
   \vdots& \vdots&\ddots&&\vdots \\
   0& 0&\cdots&1&0\\
   y&1 &\cdots&0&1
\end{pmatrix}
\oplus \bigoplus_{j>i} I_{(\dep \mathbf{s}^{[j]}+1)m}
\end{equation}
where $R_j \in \operatorname{GL}_{(\dep \mathbf{s}^{[j]}+1)(m+1)}$, for $j<i$, are some lower triangle matrix and $y$ is some element in $\overline{\mathbb{F}_q(t)}$.
By Definitions \ref{Smj} and \ref{explicitform}, we can see that the block $R_j$ is equal to
\begin{equation}
\begin{pmatrix}
   1 & 0 &\cdots&\cdots&0\\
   0& 1&0&\cdots&0\\
   \vdots& \vdots&\ddots&&\vdots \\
   0& 0&\cdots&1&0\\
   0&1 &\cdots&0&1
\end{pmatrix}
\end{equation}
if $\mathbf{s}^{[j]}=(s^\prime,\,s_{i_2},\,\dots,\,s_{i_d})$ for some $s^\prime$, and is equal to
\begin{equation}
\begin{pmatrix}
   1 & 0 &\cdots&\cdots&0\\
   0& 1&0&\cdots&0\\
   \vdots& \vdots&\ddots&&\vdots \\
   1& 0&\cdots&1&0\\
   0&0 &\cdots&0&1
\end{pmatrix}
\end{equation}
if $\mathbf{s}^{[j]}=(s_{i_2},\,\dots,\,s_{i_d}, s^\prime)$ for some $s^\prime$. 
The block $R_j$ is of the form
\begin{equation}
    \begin{pmatrix}
   1 & 0 &\cdots&\cdots&0\\
   0& 1&0&\cdots&0\\
   \vdots& \vdots&\ddots&&\vdots \\
   0& 0&\cdots&1&0\\
   1&0 &\cdots&0&1
\end{pmatrix}
\end{equation}
if $\mathbf{s}^{[j]}=(s_{i_2},\,\dots,\,s_{i_d})$.
For other $j <i$, we have $(s_{i_2},\,\dots,\,s_{i_d}) \not \in \operatorname{Sub}(\mathbf{s}^{[j]})$ by the enumeration of $\in \operatorname{Sub}(\mathbf{s})$ and hence the block $R_j$ is equal to the identity matrix.

We take arbitrary $b \in \overline{\mathbb{F}_q(t)}$ and let $Q_b^\prime \in \Gamma_{M^\prime(i,\,m)}(\overline{\mathbb{F}_q(t)})$ be the matrix given by putting $a_0=1$, $x_{0,\,(s_{i_1})}=b$, and putting other variables to be $0$. Let us take $Q_b\in \varphi_{i,\,m}^{-1}(Q_b^\prime) \subset \Gamma_{M(i,\,m)}(\overline{\mathbb{F}_q(t)})$.
We have
Equation \eqref{polynomi1or0}
for each $0 \leq m^\prime \leq m$ and $s \geq 1$ again.
We consider the value of the polynomial
\begin{equation}
    \underline{D}_{s+1}^{(m^\prime)}
\begin{pmatrix}
   a_0 &a_1& \cdots& a_{m^\prime}\\
   \vdots&\vdots && \vdots \\
   a_0 &a_1& \cdots& a_{m^\prime}\\
   x_{0,\,\mathbf{s}^{[j]}} & x_{1,\,\mathbf{s}^{[j]}} & \cdots& x_{m^\prime,\,\mathbf{s}^{[j]}}
   \end{pmatrix}
   :=\sum_{\substack{l_1,\,\dots,\,l_s,\,l^\prime \geq0\\l_1+\dots+l_s+l^\prime=m^\prime}}a_{l_1}\cdots a_{l_s}x_{l^\prime,\,\mathbf{s}^{[j]}}
\end{equation}
for $1 \leq j \leq 2^r-1$, $s \geq 0$, and $0\leq m^\prime \leq m$. This is equal to $b$ if $\mathbf{s}^{[j]}=(s_{i_1})$ and $m^\prime=0$. In the other cases, the value of the polynomial is $0$.

Then we can see that $Q_b$ is of the form
\begin{equation}
I_{n+1}\oplus
\bigoplus_{j<i}Q_j
\oplus \mathcal{X}(z)\oplus\bigoplus_{j>i}Q_j^\prime
\end{equation}
for some $z \in \overline{\mathbb{F}_q(t)}$ and $Q_j \in \operatorname{GL}_{(\dep \mathbf{s}^{[j]}+1)(m+1)}$, $Q_j^\prime \in \operatorname{GL}_{(\dep \mathbf{s}^{[j]}+1)(m)}$, where we put $\mathcal{X}(z)$ to be the matrix
\begin{equation}
    \begin{pmatrix}
   \begin{bmatrix}
   1 & 0 &\cdots&0\\
   b& 1&\ddots&0\\
   \vdots& \vdots&\ddots&\vdots \\
   0& 0&\cdots&1\\
\end{bmatrix}  & O &\cdots&\cdots&O\\
   O& \begin{bmatrix}
   1 & 0 &\cdots&0\\
   b& 1&\ddots&0\\
   \vdots& \vdots&\ddots&\vdots \\
   0& 0&\cdots&1\\
\end{bmatrix} &O&\cdots&O\\
   \vdots& \vdots&\ddots&&\vdots \\
   O& O&\cdots&
   \begin{bmatrix}
   1 & 0 &\cdots&0\\
   b& 1&\ddots&0\\
   \vdots& \vdots&\ddots&\vdots \\
   0& 0&\cdots&1\\
\end{bmatrix} &O\\
   \begin{bmatrix}
   0 & 0 &\cdots&0\\
   0& 0&\ddots&0\\
   \vdots& \vdots&\ddots&\vdots \\
   z& 0&\cdots&0\\
\end{bmatrix} &O &\cdots&O&\begin{bmatrix}
   1 & 0 &\cdots&0\\
   b& 1&\ddots&0\\
   \vdots& \vdots&\ddots&\vdots \\
   0& 0&\cdots&1\\
\end{bmatrix} 
\end{pmatrix}.
\end{equation}
 For $j<i$ such that $\mathbf{s}^{[j]}$ is of the form $(s_{i_2},\,\dots,\,s_{i_d})$, $(s^\prime,\,s_{i_2},\,\dots,\,s_{i_d})$, or $(s_{i_2},\,\dots,\,s_{i_d}, s^\prime)$ for some $s^\prime$, we have $(s_{i_1}) \not \in \operatorname{Sub}^\prime(\mathbf{s}^{[j]})$, so $Q_j$ is the identity matrix.

Similarly to the calculation in Equation \eqref{commutator1}, we can hence show that the commutator 
\begin{equation}
    RQ_bR^{-1}Q_b^{-1}\in   \Gamma_{M(i,\,m)}(\overline{\mathbb{F}_q(t)})
\end{equation}
is written as follows:
\begin{equation}
I_{n+1}\oplus \bigoplus_{j<i}I_{(\dep \mathbf{s}^{[j]}+1)(m+1)} \oplus
\begin{pmatrix}
   1 & 0 &\cdots&\cdots&0\\
   0 & 1&0&\cdots&0\\
   0 & 0& 1&\ddots&\vdots\\
   \vdots&\vdots& \ddots&\ddots&0 \\
   b& 0&\cdots &0&1\\
\end{pmatrix}
\oplus \bigoplus_{j>i} I_{(\dep \mathbf{s}^{[j]}+1)m},
\end{equation}
from which we obtain $RQ_bR^{-1}Q_b^{-1} \in  \mathcal{V}_{i,\,m}$.
As $b$ is an arbitrarily chosen element of $\overline{\mathbb{F}_q(t)}$, we notice that the inclusion $\mathcal{V}_{i,\,m} \subset V_{i,\,m}$ in Lemma \ref{LemmaVinclusion} is an isomorphism and hence we have $\dim \mathcal{V}_{i,\,m}=1$.

By Exact sequence \eqref{exactseqinduction}, we have
\begin{equation}
    \dim \Gamma_{M(i,\,m)}=\dim \Gamma_{M^\prime(i,\,m)} + \dim \mathcal{V}_{i,\,m}=\dim G_{i,\,m}^\prime+1=\dim G_{i,\,m},
\end{equation}
see Equation \eqref{dimGimsucsessor} for the last equality. Therefore, we can conclude that $\Gamma_{M(i,\,m)} =G_{i,\,m}$ since $G_{i,\,m}$ is smooth and irreducible.
\end{proof}

Using Proposition \ref{Propcasehigherdepth}, we can calculate the structures of certain $t$-motivic Galois groups as follows. The theorem below corresponds to an algebraic independence (Theorem \ref{MainBpartial}) via the theorem of Papanikolas.
\begin{Theorem}\label{ThmBGrouppartial}
    Take an index $\mathbf{s}=(s_1,\,\dots,\,s_r)\in \mathbb{Z}_{>0}^r$ such that $(q-1) \nmid s_i$ for $1 \leq i \leq r$ and $s_i/s_j \not \in p^{\mathbb{Z}}$ for $1 \leq i< j \leq r$. 
    For any $u_1,\,\dots,\,u_r \in \overline{K}[t]$ chosen so that $\mathcal{L}_{(u_i),\,(s_i)}(t)|_{t=\theta}\neq 0$ and
    \begin{equation}        ||u_i||_\infty<|\theta|_\infty^{\frac{s_i q}{q-1}}
    \end{equation}
    for each $1 \leq i \leq r$, we have 
    \begin{equation}\label{maineq}
        \Gamma_{\Psi(i,\,0)} = G_{i,\,0} 
    \end{equation}
    for $1 \leq i \leq 2^r-1$ where $\Psi(i,\,0)$ is the matrix given in Equation \eqref{Psiim} and $G_{i,\,0}$ is the algebraic variety introduced in Definition \ref{explicitform}.
\end{Theorem}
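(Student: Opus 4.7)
The plan is to prove Theorem \ref{ThmBGrouppartial} by induction on $i$ running from $1$ to $2^r - 1$, invoking the two propositions already established in this subsection. Throughout, we may assume the enumeration of $\operatorname{Sub}^\prime(\mathbf{s})$ is chosen so that $\mathbf{s}^{[j]} = (s_j)$ for $1 \leq j \leq r$, since the depth-one elements of $\operatorname{Sub}^\prime(\mathbf{s})$ fill the first $r$ positions of any admissible enumeration.

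For the base cases $1 \leq i \leq r$: the pre-$t$-motive $M(i, 0) = \rho_n C \oplus \bigoplus_{j=1}^{i} M[j]$ involves only depth-one summands and so coincides with the pre-$t$-motive $M(n)$ of Section \ref{sectionfirststep} built from the sub-index $(s_1, \dots, s_i)$ of $\mathbf{s}$. The hypotheses $(q - 1) \nmid s_j$, $s_j / s_k \notin p^{\mathbb{Z}}$ for $j \neq k$, the size bound $\|u_j\|_\infty < |\theta|_\infty^{s_j q/(q-1)}$, and the non-vanishing $\mathcal{L}_{(u_j), (s_j)}|_{t = \theta} \neq 0$ all restrict to this sub-index, so Proposition \ref{PropMainBfirststep} applies and yields $\Gamma_{\Psi(i, 0)} = G_{i, 0}$.

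For the inductive step $r + 1 \leq i \leq 2^r - 1$: by Equations \eqref{explainMprime} and \eqref{explainGprime1} one has $M^\prime(i, 0) = M(i - 1, 0)$ and $G^\prime_{i, 0} = G_{i - 1, 0}$, so the induction hypothesis at stage $i - 1$ supplies $\Gamma_{M^\prime(i, 0)} = G^\prime_{i, 0}$. This is precisely the assumption required by Proposition \ref{Propcasehigherdepth} with $m = 0$, which delivers $\Gamma_{\Psi(i, 0)} = G_{i, 0}$ and closes the induction.

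Since both propositions are in hand, the only work is this bookkeeping induction, and no real obstacle arises. The genuine difficulty, already absorbed into Proposition \ref{Propcasehigherdepth}, was the production of commutators $R Q_b R^{-1} Q_b^{-1}$ sweeping out the full $\mathbb{G}_a$-worth of matrices inside $V_{i, 0} \simeq \mathbb{G}_a$, thereby forcing $\mathcal{V}_{i, 0} = V_{i, 0}$; given this, the exact sequence $1 \to \mathcal{V}_{i, 0} \to \Gamma_{M(i, 0)} \to \Gamma_{M^\prime(i, 0)} \to 1$ yields $\dim \Gamma_{M(i, 0)} = \dim G_{i, 0}$, and the smoothness and irreducibility of $G_{i, 0}$ upgrade the closed immersion $\Gamma_{\Psi(i, 0)} \hookrightarrow G_{i, 0}$ of Proposition \ref{iota} to an equality.
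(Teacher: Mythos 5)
Your proposal is correct and follows exactly the argument the paper gives: the paper proves Theorem \ref{ThmBGrouppartial} by induction on $i$, using Proposition \ref{PropMainBfirststep} for the depth-one cases $1\leq i\leq r$ and Proposition \ref{Propcasehigherdepth} with $m=0$ for the inductive step $r+1\leq i\leq 2^r-1$. Your write-up merely makes explicit the bookkeeping (the enumeration convention and the identification $M^\prime(i,0)=M(i-1,0)$, $G^\prime_{i,0}=G_{i-1,0}$) that the paper leaves implicit.
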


Propositions \ref{PropMainBfirststep} and \ref{Propcasehigherdepth} enable us to obtain Theorem \ref{ThmBGrouppartial} by induction on $i$. This yields algebraic independence of MZV's (which can be seen as $0$-th Taylor coefficients of Anderson-Thakur series) and Taylor coefficients of the power series $\Omega$ (Theorem \ref{MainBpartial}). We further prove the following proposition in order to deal with higher  Taylor coefficients of Anderson-Thakur series.
Recall that the set $\operatorname{Sub}^\prime (\mathbf{s})$ was enumerated so that we have $\operatorname{dep}(\mathbf{s}^{[i]}) \leq \operatorname{dep}(\mathbf{s}^{[i+1]})$ for $1 \leq \#\operatorname{Sub}^\prime (\mathbf{s})-1$. Hence $\operatorname{dep}(\mathbf{s}^{[i]})=1$ for $1 \leq i\ \leq r$ if $s_1,\,\dots,\,s_r$ are assumed to be pairwise distinct.

\begin{Proposition}\label{Propcasedep1}
    Take an index $\mathbf{s}=(s_1,\,\dots,\,s_r)\in \mathbb{Z}_{>0}^r$ such that $(q-1) \nmid s_i$ for $1 \leq i \leq r$ and $s_1,\,\dots,\,s_r$ are distinct positive integers.  
    Also take $u_1,\,\dots,\,u_r \in \overline{K}[t]$ such that $\mathcal{L}_{(u_i),\,(s_i)}(t)|_{t=\theta}\neq 0$ and
    \begin{equation}        ||u_i||_\infty<|\theta|_\infty^{\frac{s_i q}{q-1}}
    \end{equation}
    for all $1 \leq i \leq r$. 
    If $\mathbf{s}^{[i]}:=(s_i^\prime)$ with positive integer $s_i^\prime$ not divisible by $p$, then the equality $\Gamma_{M^\prime(i,\,m)}=G^\prime_{i,\,m}$ deduce $\Gamma_{M(i,\,m)}=G_{i,\,m}$ for $m \geq 1$ and $1 \leq i \leq r$.
\end{Proposition}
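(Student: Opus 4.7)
The plan mirrors the argument of Example \ref{simplestexample2}, generalized to arbitrary $m \geq 1$. The short exact sequence $1 \to \mathcal{V}_{i,m} \to \Gamma_{M(i,m)} \xrightarrow{\varphi_{i,m}} \Gamma_{M'(i,m)} \to 1$, combined with the hypothesis $\Gamma_{M'(i,m)} = G'_{i,m}$, the inclusion $\mathcal{V}_{i,m} \subseteq V_{i,m} \simeq \mathbb{G}_a$ from Lemma \ref{LemmaVinclusion}, and the dimension identity $\dim G_{i,m} = \dim G'_{i,m} + 1$ of Equation \eqref{dimGimsucsessor}, reduces the desired equality $\Gamma_{M(i,m)} = G_{i,m}$ to exhibiting a non-trivial element in $\mathcal{V}_{i,m}(\overline{\mathbb{F}_q(t)})$. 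Smoothness and irreducibility of $G_{i,m}$ will then promote the closed immersion of Proposition \ref{iota} to an isomorphism.

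To produce such an element we pick, for each $b \in \overline{\mathbb{F}_q(t)}$, a pair $R', Q_b' \in G'_{i,m}$: take $R'$ with $a_0 = 1, x_{0,(s_i')} = 1$ and every other coordinate zero, and take $Q_b'$ with $a_0 = 1, a_m = b$ and every other coordinate zero. The choice $a_m = b$ (rather than $a_1 = b$, which only suffices for $m = 1$) is crucial: at the point $a_0 = 1, a_m = b, a_k = 0$ for $k \neq 0, m$, the polynomials $\underline{D}_s^{(k)} = \sum_{j_1 + \cdots + j_s = k} a_{j_1} \cdots a_{j_s}$ appearing in Definition \ref{Smj} vanish for every $0 < k < m$ and equal $sb$ for $k = m$. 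Lift $R', Q_b'$ to $R, Q_b \in \Gamma_{M(i,m)}$ via surjectivity of $\varphi_{i,m}$; the lifts are determined up to the single free coordinate $x_{m, (s_i')}$ living in the $\rho_m M[i]$-block.

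The heart of the proof is a blockwise commutator calculation of $[R, Q_b]$. Outside the $\rho_m M[i]$-block the commutator is trivial: $R$ reduces to the identity in the $\rho_n C$-block and in each $\rho_m M[j]$-block with $j < i$, whereas $Q_b$ reduces to the identity in each $\rho_{m-1} M[j]$-block with $j > i$. These reductions follow from the vanishings $\partial^{(k)}(a_0^s)|_R = 0$ for $k \geq 1$ and $\partial^{(k)}(a_0^s)|_{Q_b} = 0$ for $0 < k < m$, together with the fact that the $x_{k, \mathbf{s}'}$ appearing in such blocks are coordinates of $G'_{i,m}$ set to zero in both $R'$ and $Q_b'$. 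In the $\rho_m M[i]$-block, writing $R = I + N_R$ and $Q_b = I + N_Q$, an index chase based on the explicit shapes of $N_R$ (non-zero entries on the subdiagonals $(2l, 2l-1)$ for $1 \leq l \leq m+1$ and at $(2(m+1), 1)$) and of $N_Q$ (non-zero entries at $(2m+1, 1)$ and $(2m+2, 1)$) yields $N_R^2 = N_Q^2 = N_Q N_R = 0$, so $[R, Q_b] = I + N_R N_Q$, and the unique surviving entry of $N_R N_Q$, arising from the interaction of the subdiagonal unit $(N_R)_{2m+2, 2m+1} = 1$ with $(N_Q)_{2m+1, 1} = s_i' b$, equals $s_i' b$ in the $V_{i,m}$-slot at position $(2m+2, 1)$.

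Because $p \nmid s_i'$, the scalar $s_i'b$ takes every value in $\overline{\mathbb{F}_q(t)}$ as $b$ varies, giving $\mathcal{V}_{i,m} = V_{i,m}$ and completing the argument. The main technical obstacle is the blockwise vanishing verification outside the $\rho_m M[i]$-block; the delicate case is $j > i$ at depth $\geq 2$, where both $R$ and $Q_b$ could a priori carry non-trivial entries, but the engineered choice of $Q_b$ (activating only $a_m$) forces $Q_b$'s restriction to the identity through the vanishing $\partial^{(k)}(a_0^s) = 0$ for $0 < k \leq m-1$ at its evaluation point.
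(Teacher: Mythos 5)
Your proposal is correct and follows essentially the same route as the paper's proof: the same exact sequence $1\to\mathcal{V}_{i,m}\to\Gamma_{M(i,m)}\to\Gamma_{M'(i,m)}\to 1$, the same choices of $R'$ (activating $x_{0,(s_i')}$) and $Q_b'$ (activating $a_m=b$), the same commutator computation producing $s_i'b$ in the $V_{i,m}$-slot, and the same dimension-plus-smoothness conclusion. Your explicit nilpotency bookkeeping ($N_R^2=N_Q^2=N_QN_R=0$, so $[R,Q_b]=I+N_RN_Q$) merely spells out what the paper handles by reference to Example \ref{simplestexample2}.
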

\begin{proof}
We note that our arguments are similar to those in Example \ref{simplestexample2}.
Also in this case, we have an exact sequence \eqref{exactseqinduction}.
Similarly to the proof of Proposition \ref{Propcasehigherdepth}, our primary goal is to obtain $ \dim \mathcal{V}_{i,\,m}=1$.
Our strategy here to achieve this is also similar to that in the proof of Proposition \ref{Propcasehigherdepth}.
That is, we choose appropriate matrices $R$ and $Q_b$ from $\Gamma_{M(i,\,m)}$ and calculating their commutator, likely to the discussions in Example \ref{simplestexample2}.

We choose arbitrary $b \in \overline{\mathbb{F}_q(t)}$. The assumption $\Gamma_{M^\prime(i,\,m)}=G^\prime_{i,\,m}$, enables us to pick $Q_b^\prime \in \Gamma_{M^\prime(i,\,m)}(\overline{\mathbb{F}_q(t)})$  given by $a_0=1$, $a_m=b$, and  $a_1=\cdots=a_{m-1}=a_{m+1}=\cdots=a_n=0$, and putting $x_{m^\prime,\,\mathbf{s}^{[j]}}=0$ for each $m^\prime$ and $j$ in the notation in Definition \ref{explicitform}. We then take any preimage $Q_b \in \varphi_{i,\,m}^{-1}(Q_b^\prime) \subset \Gamma_{M(i,\,m)}(\overline{\mathbb{F}_q(t)})$ under $\varphi_{i,\,m}$, which we recall that it is surjective.

Because of the chosen $a_{0},\ldots,a_{n}$ above, we obtain
\begin{align}
\underline{D}_{s}^{(m^\prime)}
\begin{pmatrix}
   a_0 &a_1& \cdots& a_{m^\prime}\\
   \vdots&\vdots &\vdots &\vdots\\
   a_0 &a_1& \cdots& a_{m^\prime}
   \end{pmatrix} &:=\sum_{\substack{l_1,\,\dots,\,l_s \geq0\\l_1+\cdots+l_s=m^\prime}}a_{l_1}\cdots a_{l_s}\\
   &=\begin{cases}
       1& \text{ if } m^\prime=0;\\
       0& \text{ if } 1\leq m^\prime \leq n \text{ and }m^\prime \neq m\\
       sb& \text{ if } m^\prime = m
   \end{cases}
\end{align}
see Subsection \ref{subsectionexplicit} for the notation. We note further that
\begin{equation}
    \underline{D}_{s+1}^{(m^\prime)}
\begin{pmatrix}
   a_0 &a_1& \cdots& a_{m^\prime}\\
   \vdots&\vdots &\vdots \\
   a_0 &a_1& \cdots& a_{m^\prime}\\
   x_{0,\,\mathbf{s}^{[j]}} & x_{1,\,\mathbf{s}^{[j]}} & \cdots& x_{m^\prime,\,\mathbf{s}^{[j]}}
   \end{pmatrix}
   :=\sum_{\substack{l_1,\,\dots,\,l_s,\,l^\prime \geq0\\l_1+\dots+l_s+l^\prime=m^\prime}}a_{l_1}\cdots a_{l_s}x_{l^\prime,\,\mathbf{s}^{[j]}}=0
\end{equation}
for $1 \leq j \leq 2^r-1$, $s \geq 0$, and $0\leq m^\prime \leq m$.
Hence, recalling Definitions \ref{Smj} and \ref{explicitform}, we can see that the matrix $Q_b$ is of the form
\begin{align}

& \left(
I_{n+1}+
b\begin{pmatrix}
    0& \cdots & & &\\
    1& 0 & \cdots & &  \\
    0 &1 & \ddots & &\\
    \vdots & & \ddots &&\\
    0& \cdots &\cdots & 1&0
\end{pmatrix}^{m}

\right)

\oplus
\bigoplus_{1\leq j<i}\begin{pmatrix}
   1 & 0 &\cdots&\cdots&0\\
   0& 1&0&\cdots&0\\
   \vdots& \vdots&\ddots&&\vdots \\
   s_j^\prime b& 0&\cdots&1&0\\
   0&0 &\cdots&0&1
\end{pmatrix}\\
& \quad \quad \oplus
\begin{pmatrix}
   1 & 0 &\cdots&\cdots&0\\
   0& 1&0&\cdots&0\\
   \vdots& \vdots&\ddots&&\vdots \\
   s_i^\prime b& 0&\cdots&1&0\\
   y&0 &\cdots&0&1
\end{pmatrix}
\oplus \bigoplus_{i<j \leq 2^r-1} I_{(\dep \mathbf{s}^{[j]}+1)m}
\end{align}
for some $y \in \overline{\mathbb{F}_q(t)}$,
here we put $\mathbf{s}^{[j]}=(s_j^\prime)$ for $1 \leq j \leq i$.
We note that the $s_j^\prime $ occurring in some entry of the matrix above is naturally viewed as $s_j^\prime $ modulo $p$ in $\mathbb{F}_p$.

We recall that we have an assumption $\Gamma_{M^\prime(i,\,m)}=G^\prime_{i,\,m}$, which enables us to take the matrix $R^\prime \in \Gamma_{M^\prime(i,\,m)}(\overline{\mathbb{F}_q(t)})$ given by putting $a_0=1, a_1=\cdots= a_n=0$, $x_{0,\,\mathbf{s}^{[i]}}=1$ and putting $x_{m^\prime,\,\mathbf{s}^{[j]}}=0$ for all other $m^\prime,\, j$, and we further take any preimage $R\in \varphi_{i,\,m}^{-1} (R^\prime)\subset \Gamma_{M(i,\,m)}(\overline{\mathbb{F}_q(t)})$.
Since $a_0=1$ and $a_{1}=\cdots=a_{n}=0$, we have  Equation \eqref{polynomi1or0} for each $0 \leq m^\prime \leq n$ and $s \geq 1$ also in this case.

Recall the following notation
\begin{equation}
    \underline{D}_{s+1}^{(m^\prime)}
\begin{pmatrix}
   a_0 &a_1& \cdots& a_{m^\prime}\\
   \vdots&\vdots &\vdots \\
   a_0 &a_1& \cdots& a_{m^\prime}\\
   x_{0,\,\mathbf{s}^{[j]}} & x_{1,\,\mathbf{s}^{[j]}} & \cdots& x_{m^\prime,\,\mathbf{s}^{[j]}}
   \end{pmatrix}
   :=\sum_{\substack{l_1,\,\dots,\,l_s,\,l^\prime \geq0\\l_1+\dots+l_s+l^\prime=m^\prime}}a_{l_1}\cdots a_{l_s}x_{l^\prime,\,\mathbf{s}^{[j]}}
\end{equation}
for $1 \leq j \leq 2^r-1$, $s \geq 0$, and $0\leq m^\prime \leq m$.
This value equals $1$ if $m^\prime=0$ and $j=i$, or $0$ otherwise.
Hence, recalling Definitions \ref{Smj} and \ref{explicitform}, we notice that the matrix $R$ is of the form:

\begin{align}
I_{n+1}\oplus \bigoplus_{j<i}I_{(\dep \mathbf{s}^{[j]}+1)(m+1)} \oplus
\begin{pmatrix}
   1 & 0 &\cdots&&&\cdots&0\\
   1& 1&0&\cdots&&\cdots&0\\
   0& 0& 1&0&\cdots&\cdots&0\\
   0& 0& 1&1&0&\cdots &0\\
   \vdots& \vdots&\ddots&&\ddots&& \vdots \\
   0& 0&\cdots&&\cdots&1&0\\
   z&0 &\cdots&&\cdots&1&1
\end{pmatrix}
\oplus \bigoplus_{j>i}R_j^\prime
\end{align}
where $z \in \overline{\mathbb{F}_q(t)}$ and $R_j^\prime \in \operatorname{GL}_{(\dep \mathbf{s}^{[j]}+1)(m)}$ are some lower triangle matrices.

Considering similarly as in the caluculation in Equation \eqref{commutator2}, we notice that the commutator $RQ_bR^{-1}Q_b^{-1} \in \Gamma_{M(i,\,m)}(\overline{\mathbb{F}_q(t)})$ is written as follows:
\begin{equation}
I_{n+1}\oplus \bigoplus_{j<i}I_{(\dep \mathbf{s}^{[j]}+1)(m+1)} \oplus
\begin{pmatrix}
   1 & 0 &\cdots&0\\
   0& 1&\ddots&0\\
   \vdots& \vdots&\ddots&\vdots \\
   s_i^\prime b& 0&\cdots&1\\
\end{pmatrix}
\oplus \bigoplus_{j>i} I_{(\dep \mathbf{s}^{[j]}+1)m},
\end{equation}
hence $RQ_bR^{-1}Q_b^{-1} \in \mathcal{V}_{i,\,m}(\overline{\mathbb{F}_q(t)}$.

As $b$ is an arbitrarily chosen element of $\overline{\mathbb{F}_q(t)}$ and we assumed that $p \nmid s_i^\prime$, we notice that the inclusion $\mathcal{V}_{i,\,m} \subset V_{i,\,m}\simeq \mathbb{G}_a$ given in Lemma \ref{LemmaVinclusion} is an isomorphism and hence we have $\dim \mathcal{V}_{i,\,m}=1$.

By Equation \eqref{dimGimsucsessor} and Exact sequence \eqref{exactseqinduction}, we have
\begin{equation}
    \dim \Gamma_{M(i,\,m)}=\dim \Gamma_{M^\prime(i,\,m)} + \dim \mathcal{V}_{i,\,m}=\dim G_{i,\,m}.
\end{equation}
Hence we have $\Gamma_{M(i,\,m)} =G_{i,\,m} $ as $G_{i,\,m}$ is smooth and irreducible.
\end{proof}

Now we are ready to obtain the following conclusion on $t$-motivic Galois groups. Via the theory of Papanikolas (Theorem \ref{ThmPapanikolas}), the following result on $t$-motivic Galois group gives us the algebraic independence result on Taylor coefficients of the power series $\Omega$ and those of Anderson-Thakur series (Theorem \ref{MainB}).

\begin{Theorem}\label{MainGroupeq}
    Take an index $\mathbf{s}=(s_1,\,\dots,\,s_r)\in \mathbb{Z}_{>0}^r$ such that $p \nmid s_i,\,(q-1) \nmid s_i$ for $1 \leq i \leq r$ and $s_1,\,\dots,\,s_r$ are distinct positive integers. 
    For any $u_1,\,\dots,\,u_r \in \overline{K}[t]$ chosen so that $\mathcal{L}_{(u_i),\,(s_i)}(t)|_{t=\theta}\neq 0$ and
    \begin{equation}        ||u_i||_\infty<|\theta|_\infty^{\frac{s_i q}{q-1}}
    \end{equation}
    for each $1 \leq i \leq r$, we have 
    \begin{equation}\label{maineqpartial}
        \Gamma_{M(i,\,m)} = G_{i,\,m} 
    \end{equation}
    for $1 \leq i \leq 2^r-1$ and $0 \leq m \leq n$ where $M(i,\,m)$ is the pre-$t$-motive given in Definition \ref{mainpretmotive} and $G_{i,\,m}$ is the algebraic variety introduced in Definition \ref{explicitform}.
\end{Theorem}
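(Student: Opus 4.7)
The plan is to prove Theorem \ref{MainGroupeq} by induction on the pair $(i,m)$ ordered lexicographically by $m$ first and then by $i$, i.e.\ $(i_1,m_1) \geq (i_2,m_2)$ iff $m_1 > m_2$, or $m_1 = m_2$ and $i_1 \geq i_2$. The key observation is that all the hard work has already been isolated in Propositions \ref{PropMainBfirststep}, \ref{Propcasehigherdepth}, and \ref{Propcasedep1}, so what remains is to verify that these results jointly cover every step of the induction.

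For the base of the induction, I would invoke Proposition \ref{PropMainBfirststep}, which (after identifying $M(n)$ of Section \ref{sectionfirststep} with $M(r,0)$ here) yields $\Gamma_{M(i,0)} = G_{i,0}$ for every $1 \leq i \leq r$ under the standing hypotheses $(q-1) \nmid s_i$ and $s_i/s_j \notin p^{\mathbb{Z}}$ (the latter is automatic from the Theorem's hypothesis that $p \nmid s_i$ together with distinctness, once one remarks that $s_i/s_j = p^k$ with $p \nmid s_i$ forces $k \leq 0$, and symmetrically $k \geq 0$, hence $s_i = s_j$). This settles the tuples $(1,0),(2,0),\ldots,(r,0)$.

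Next, for $m = 0$ and $r+1 \leq i \leq 2^r - 1$, I would apply Proposition \ref{Propcasehigherdepth} inductively in $i$: assuming $\Gamma_{M(i-1,0)} = G_{i-1,0}$, i.e.\ $\Gamma_{M^\prime(i,0)} = G^\prime_{i,0}$, the proposition gives $\Gamma_{M(i,0)} = G_{i,0}$. This recovers Theorem \ref{ThmBGrouppartial} and completes the layer $m = 0$. Then, for each successive $m \geq 1$, I would process the tuples in order $(1,m),(2,m),\ldots,(2^r-1,m)$. For $1 \leq i \leq r$ the subindex $\mathbf{s}^{[i]}$ has depth one (by the enumeration convention, since $s_1,\dots,s_r$ are pairwise distinct), so Proposition \ref{Propcasedep1}, together with the hypothesis $p \nmid s_i^\prime$, turns the inductive hypothesis $\Gamma_{M^\prime(i,m)} = G^\prime_{i,m}$ into $\Gamma_{M(i,m)} = G_{i,m}$; note that $M^\prime(1,m) = M(2^r-1,m-1)$ by Definition \ref{mainpretmotive}, so the inductive input at the left edge $i = 1$ is exactly the output of the previous layer $m-1$. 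For $r+1 \leq i \leq 2^r - 1$ and the same $m$, Proposition \ref{Propcasehigherdepth} again promotes the inductive hypothesis one step further.

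The proof itself is therefore essentially bookkeeping; no new calculation is required. I do not anticipate a genuine obstacle beyond correctly matching indices in the recursion $M^\prime(1,m) = M(2^r-1,m-1)$ and $M^\prime(i,m) = M(i-1,m)$ for $i > 1$, and checking that each intermediate step falls into the hypotheses of exactly one of the two propositions. More precisely, the written form of the theorem is the following inductive statement:
\begin{equation}
\Gamma_{M(i,m)} = G_{i,m} \quad \text{for all } 1 \leq i \leq 2^r - 1, \ 0 \leq m \leq n,
\end{equation}
and the argument above produces it tuple-by-tuple in the prescribed order, finishing the proof.
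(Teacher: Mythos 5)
Your proposal is correct and matches the paper's own proof: the paper also argues by induction on $(i,m)$ in exactly this order, using Proposition \ref{PropMainBfirststep} for the base cases $m=0$, $1\leq i\leq r$, and Propositions \ref{Propcasehigherdepth} and \ref{Propcasedep1} for the inductive steps. Your added remark that $p\nmid s_i$ together with distinctness implies $s_i/s_j\notin p^{\mathbb{Z}}$ is a worthwhile verification that the paper leaves implicit.
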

\begin{proof}
    We prove $\Gamma_{M(i,\,m)}=G_{i,\,m}$ by induction on tuple $(i,\,m)$ where the order is given by
\begin{equation}
    (i_1,\,m_1) \geq (i_2 ,\,m_2) \text{ if ``$m_1> m_2$'' or ``$m_1=m_2$ and $i_1 \geq i_2$''}.
\end{equation}
In the cases where $m=0$ and $1 \leq i \leq r$, the equality $\Gamma_{M(i,\,m)}=G_{i,\,m}$ is proven in Proposition \ref{PropMainBfirststep}. 
Further, Propositions \ref{Propcasehigherdepth} and \ref{Propcasedep1} enable us to accomplish the proof of the assertion by the induction.   
\end{proof}

\subsection{Transcendence result and some remarks}

In this section, we deduce our main result (Theorems \ref{MainBpartial} and \ref{MainB}) from the calculation in the previous section (Theorems \ref{ThmBGrouppartial} and \ref{MainGroupeq}) of $t$-motivic Galois groups.
We write $\mathcal{L}_{\mathbf{u}^\prime,\,\mathbf{s}^\prime}$ for the $t$-motivic CMPL as in Section \ref{subsectionATseries} for each tuple $\mathbf{u}^\prime=(u_1,\,\dots,\,u_d) \in \overline{K}[t]$ and $\mathbf{s}^\prime=(s_1,\,\dots,\,s_d) \in \mathbb{Z}_{\geq 1}^r$ ($r \geq 1)$.

\begin{Theorem}\label{MainB}

    Take an index $\mathbf{s}=(s_1,\,\dots,\,s_r) \in \mathbb{Z}_{\geq 1}^r$ and assume that $p \nmid s_i,\,(q-1) \nmid s_i$ for $1\leq i\leq r$ and $s_1,\,\dots,\,s_r$ are distinct. 
    
    \begin{enumerate}
    \item 
        Let $u_1,\,\dots,\,u_r$ be elements in $\overline{K}[t]$ such that $\mathcal{L}_{(u_i),\,(s_i)}(t)|_{t=\theta}\neq 0$ and $||u_i||_\infty<|\theta|_\infty^{\frac{s_i q}{q-1}}$ for all $1\leq i\leq r$.
        If we consider the Taylor expansions
            \begin{equation}  
            \Omega(t)=\sum_{n=0}^{\infty} \alpha_n (t-\theta)^n
    \end{equation}
    of the power series $\Omega$ (see Example \ref{Omega} for the definition) and 
    \begin{equation}
        \mathcal{L}_{(u_{i_1},\,\dots,\,u_{i_d}),(s_{i_1},\,\dots,\,s_{i_d})}(t)=\sum_{n=0}^{\infty} \alpha_{(s_{i_1},\,\dots,\,s_{i_d}),\,n}(t-\theta)^n
    \end{equation}
    of $t$-motivic CMPL for each $(s_{i_1},\,\dots,\,s_{i_d}) \in \operatorname{Sub}^\prime(\mathbf{s})$,
    then the field generated by the set
    \begin{equation}
        \{ \alpha_{n^\prime},\, \alpha_{\mathbf{s}^\prime,\,n^\prime} \mid 0\leq n^\prime  \leq n,\,\mathbf{s}^\prime \in \operatorname{Sub}^\prime(\mathbf{s})  \}
    \end{equation}
    over $\overline{K}$ has transcendental degree $(n+1)2^r$ for each $n\geq0$.
    \item 
        Considering the Taylor expansions
\begin{equation}
     \zeta_A^{\mathrm{AT}}(\mathbf{s})=\sum \beta_{\mathbf{s},\,n}(t-\theta)^n
\end{equation}
of Anderson-Thakur series (Definition \ref{DefATseries}), we can see that the field extension over $\overline{K}$ generated by
    \begin{equation}
        \{ \alpha_{n^\prime} ,\, \beta_{\mathbf{s}^\prime,\,n^\prime} \mid 0\leq n^\prime  \leq n,\,\mathbf{s}^\prime \in \operatorname{Sub}^\prime(K) \}
    \end{equation}
    has transcendental degree $(n+1)2^r$ over $\overline{K}$.
    \end{enumerate}
\end{Theorem}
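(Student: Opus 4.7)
The plan is to derive Theorem \ref{MainB} directly by combining three ingredients already assembled in the paper: Proposition \ref{PropDimTrans}, which converts transcendence degree questions into dimension computations of the $t$-motivic Galois groups $\Gamma_{M(i,m)}$; Theorem \ref{MainGroupeq}, which identifies $\Gamma_{M(i,m)}$ with the explicit variety $G_{i,m}$ under the standing hypotheses ($p\nmid s_i$, $(q-1)\nmid s_i$, and the $s_i$ distinct); and finally Papanikolas' theorem \ref{ThmPapanikolas}. The specific case that yields Theorem \ref{MainB} is the extremal one, namely $i=\#\operatorname{Sub}'(\mathbf{s})=2^r-1$ and $m=n$, since in that case the pre-$t$-motive $M(2^r-1,n)$ contains as direct summands the $n$-th prolongation $\rho_n C$ together with $\rho_n M[j]$ for every $\mathbf{s}^{[j]}\in\operatorname{Sub}'(\mathbf{s})$; its periods therefore realize, via Proposition \ref{Uchino}, all the Taylor coefficients appearing in the theorem.

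First I would verify that the hypotheses of Theorem \ref{MainGroupeq} are met for part (1). Given the assumption $\mathcal{L}_{(u_i),(s_i)}(t)|_{t=\theta}\neq 0$ and $||u_i||_\infty < |\theta|_\infty^{s_iq/(q-1)}$, together with $p\nmid s_i$, $(q-1)\nmid s_i$, and pairwise distinctness of the $s_i$, Theorem \ref{MainGroupeq} applies to give $\Gamma_{M(2^r-1,n)}=G_{2^r-1,n}$. Next I would read off $\dim G_{2^r-1,n}$ from formula \eqref{dimGim} in Definition \ref{explicitform}: substituting $i=\#\operatorname{Sub}'(\mathbf{s})=2^r-1$ and $m=n$ gives
\begin{equation}
\dim G_{2^r-1,n}=(n+1)+(n+1)(2^r-1)+n\cdot 0=(n+1)\,2^r.
\end{equation}
Proposition \ref{PropDimTrans} (taking $i=2^r-1$, $m=n$) identifies this dimension with the transcendence degree over $\overline{K}$ of the field generated by the set in question, which completes the proof of part (1).

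For part (2), the idea is to specialize $u_i$ to the Anderson-Thakur polynomial $H_{s_i-1}$ for each $1\leq i\leq r$. The Anderson-Thakur inequality $||H_{s_i-1}||_\infty\leq |\theta|_\infty^{s_iq/(q-1)}$ gives the size bound needed in Theorem \ref{MainGroupeq}, and by equation \eqref{ATpolynom} we have $\mathcal{L}_{(H_{s_i-1}),(s_i)}(t)|_{t=\theta}=\Gamma_{s_i}\zeta_A(s_i)$, which is non-zero because Carlitz zeta values at positive integers are non-vanishing. The $t$-motivic CMPLs attached to this choice of $u_i$ are precisely the Anderson-Thakur series $\zeta_A^{\mathrm{AT}}(\mathbf{s}')$ for $\mathbf{s}'\in\operatorname{Sub}'(\mathbf{s})$, so their Taylor coefficients $\beta_{\mathbf{s}',n'}$ coincide with the corresponding $\alpha_{\mathbf{s}',n'}$ in this specialization. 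Hence part (2) follows immediately from part (1).

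There is no real obstacle remaining at this stage of the paper: all the hard work—namely constructing the explicit varieties $G_{i,m}$, exhibiting the closed immersion $\Gamma_{M(i,m)}\hookrightarrow G_{i,m}$ in Proposition \ref{iota}, and matching dimensions inductively in Propositions \ref{Propcasehigherdepth} and \ref{Propcasedep1}—has already been carried out. The only thing to watch is that in Proposition \ref{PropDimTrans} the equivalence between algebraic independence of values and independence of hyperderivatives at $t=\theta$ uses Proposition \ref{Uchino}, so that the Taylor coefficients $\alpha_{n'}$ and $\alpha_{\mathbf{s}',n'}$ are literally the values $\partial^{(n')}\Omega|_{t=\theta}$ and $\partial^{(n')}\mathcal{L}_{\mathbf{u}',\mathbf{s}'}|_{t=\theta}$; once this identification is made, the statement of Theorem \ref{MainB} is a direct transcription of the equality $\dim\Gamma_{M(2^r-1,n)}=(n+1)\,2^r$.
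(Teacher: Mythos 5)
Your proposal is correct and follows essentially the same route as the paper's own proof: combine Proposition \ref{PropDimTrans} with Theorem \ref{MainGroupeq} in the extremal case $(i,m)=(2^r-1,n)$, read off $\dim G_{2^r-1,n}=(n+1)2^r$, and obtain (2) from (1) by specializing $u_i=H_{s_i-1}$. Your explicit verification that $\mathcal{L}_{(H_{s_i-1}),(s_i)}|_{t=\theta}=\Gamma_{s_i}\zeta_A(s_i)\neq 0$ is a welcome detail the paper leaves implicit, but the argument is otherwise the same.
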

\begin{proof}

    Proposition \ref{PropDimTrans} and Thoerem \ref{MainGroupeq} show that the transcendence degree is equal to the dimension of the algebraic variety $G_{2^r-1,\,n}$. The dimension is equal to $(n+1)\cdot \left(\# \operatorname{Sub}^\prime(\mathbf{s})+1 \right)$ by the construction of the variety. As positive integers $s_1,\,\dots,\,s_r$ are assumed to be pairwise distinct, the set $\operatorname{Sub}^\prime(\mathbf{s})$ has exact $2^r-1$ elements, hence we have (1).
    We can deduce the assertion (2) from (1) by considering the case where $u_i$ is chosen to be the Anderson-Thakur polynomial $H_{s_i-1}$ for each $1 \leq i \leq r$.   
\end{proof}

We can also consider the same problem for $v$-adic multiple zeta values defined by Chang and Mishiba~(\cite{Chang2019a}) and Taylor coefficients of their deformations given by Chen in his doctoral thesis \cite{ChenThesis} in the case where the degree of the finite place $v$ is $1$. We hope to work on the project in the near future.

If we focus on MZV's and Taylor coefficients of the power series $\Omega$, we can relax the assumption on the index $\mathbf{s}=(s_1,\,\dots,\,s_r)$. 
In order to consider only the $0$-the Taylor coefficients of Anderson-Thakur series, it is enough to assume $s_i/s_j \not \in p^{\mathbb{Z}}$ for all $1 \leq i <j \leq r$ and positive integers $s_1,\,\dots,\,s_r$ can be a multiple of $p$:
\begin{Theorem}\label{MainBpartial}
    Take an index $\mathbf{s}=(s_1,\,\dots,\,s_r) \in \mathbb{Z}_{\geq 1}^r$  and assume that $(q-1) \nmid s_i$ for $1\leq i\leq r$. The ratio $s_i/s_j$ are assumed not to be integer power of $p$ for each $1 \leq i <j \leq r$. Let us consider the following Taylor expansion of the power series $\Omega$ (see Example \ref{Omega}):
\begin{equation}
    \Omega=\sum \alpha_n (t-\theta)^n.
\end{equation}

    \begin{enumerate}
    \item 
        If we choose polynomials $u_1,\,\dots,\,u_r$ in $\overline{K}[t]$ such that $\mathcal{L}_{(u_i),\,(s_i)}(t)|_{t=\theta}\neq 0$ and $||u_i||_\infty<|\theta|_\infty^{\frac{s_i q}{q-1}}$ for all $1\leq i\leq r$,
    then the field generated by the set
    \begin{equation}
        \{ \alpha_{n^\prime},\, \mathcal{L}_{\mathbf{u}^{[i]},\,\mathbf{s}^{[i]}}|_{t=\theta} \mid 0\leq n^\prime  \leq n,  1 \leq i \leq 2^r-1\}
    \end{equation}
    over $\overline{K}$ has transcendental degree $n+2^r$ for each $n\geq0$. Here we write $\mathbf{u}^{[i]}:=(u_{i_1},\,\dots,\,u_{i_r})$ when $\mathbf{s}^{[i]}:=(s_{i_1},\,\dots,\,s_{i_r})$.
    \item 
        The field extension over $\overline{K}$ generated by
    \begin{equation}
        \{ \alpha_{n^\prime} ,\, \zeta_A(\mathbf{s}^{[i]} )\mid 0\leq n^\prime  \leq n,\,1 \leq i \leq 2^r-1 \}
    \end{equation}
    has transcendental degree $n+2^r$.
    \end{enumerate}
\end{Theorem}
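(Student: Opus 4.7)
The plan is to derive Theorem \ref{MainBpartial} as a direct consequence of Theorem \ref{ThmBGrouppartial} via Papanikolas' machinery (Theorem \ref{ThmPapanikolas}). First I would observe that the hypothesis $s_i/s_j \notin p^{\mathbb{Z}}$ for $i<j$ already forces $s_i\neq s_j$ (since $p^0=1$), so $\#\operatorname{Sub}^{\prime}(\mathbf{s}) = 2^r-1$. This means the pre-$t$-motive $M(2^r-1,\,0) = \rho_n C \oplus \bigoplus_{j=1}^{2^r-1} M[j]$ from Definition \ref{mainpretmotive} is well-defined, and its representing matrix $\Phi(2^r-1,\,0)$ satisfies the determinant-power-of-$(t-\theta)$ hypothesis of Theorem \ref{ThmPapanikolas} (its determinant is a nonzero scalar times a power of $(t-\theta)$, since each direct summand has this property).

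Next I would invoke Theorem \ref{ThmBGrouppartial}, which under our assumptions on $\mathbf{s}$ and the $u_i$'s gives the equality $\Gamma_{\Psi(2^r-1,\,0)} = G_{2^r-1,\,0}$. By the formula \eqref{dimGim} (or a direct count from Definition \ref{explicitform}), we have
\begin{equation}
\dim G_{2^r-1,\,0} = n+1 + (2^r-1) = n + 2^r.
\end{equation}
By Theorem \ref{ThmPapanikolas}, the transcendence degree over $\overline{K}$ of the field generated by the entries of $\Psi(2^r-1,\,0)$ evaluated at $t=\theta$ equals this dimension. It then remains to identify that field with the one in the statement. For the $\rho_n C$ block, Proposition \ref{Uchino} yields $\partial^{(n')}\Omega|_{t=\theta} = \alpha_{n'}$ for $0\le n'\le n$. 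For each $M[j]$ block, the entries of $\Psi_j$ at $t=\theta$ generate, together with $\tilde\pi = \Omega^{-1}|_{t=\theta}$ (already present from $\alpha_0$), the element $\mathcal{L}_{\mathbf{u}^{[j]},\mathbf{s}^{[j]}}|_{t=\theta}$; conversely each such entry is a $\overline{K}$-rational combination of powers of $\tilde\pi$ and the various $\mathcal{L}_{\mathbf{u}^{[j']},\mathbf{s}^{[j']}}|_{t=\theta}$ for subindices of $\mathbf{s}^{[j]}$, which are all in $\operatorname{Sub}^\prime(\mathbf{s})$. This yields part (1).

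For part (2), I would specialize to the choice $u_i := H_{s_i-1}$ (the Anderson-Thakur polynomial), so that the needed inequality $\|u_i\|_\infty < |\theta|_\infty^{s_i q/(q-1)}$ holds by Anderson--Thakur's estimate recalled in Section \ref{subsectionATseries}, and the nonvanishing $\mathcal{L}_{(H_{s_i-1}),(s_i)}|_{t=\theta} \neq 0$ reduces to $\zeta_A(s_i)\neq 0$ via identity \eqref{ATpolynom}, which is classical. Again by \eqref{ATpolynom}, for each $\mathbf{s}^{[j]} = (s_{i_1},\ldots,s_{i_d})$ we have $\mathcal{L}_{\mathbf{u}^{[j]},\mathbf{s}^{[j]}}|_{t=\theta} = \Gamma_{s_{i_1}}\cdots\Gamma_{s_{i_d}}\, \zeta_A(\mathbf{s}^{[j]})$, and since the Carlitz factorials lie in $\overline{K}^\times$, swapping the CMPL special values for MZV's does not alter the generated field. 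Thus (1) specialized yields (2).

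Since the real work is entirely contained in Theorem \ref{ThmBGrouppartial}, there is no substantive obstacle in this step; the only care needed is in verifying the identification of generated fields and, for part (2), that replacing $u_i$ by $H_{s_i-1}$ does not violate the nonvanishing hypothesis of Theorem \ref{ThmBGrouppartial}, a point that is immediate from Anderson-Thakur's identity and the transcendence (in particular, nonvanishing) of Carlitz zeta values.
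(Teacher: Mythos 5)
Your proposal is correct and follows essentially the same route as the paper, which deduces Theorem \ref{MainBpartial} from Theorem \ref{ThmBGrouppartial} together with the dimension count $\dim G_{2^r-1,\,0}=n+2^r$ and Papanikolas' theorem (packaged in the paper as Proposition \ref{PropDimTrans}), and then obtains (2) by specializing $u_i=H_{s_i-1}$. The details you supply (that $s_i/s_j\notin p^{\mathbb{Z}}$ forces $\#\operatorname{Sub}^\prime(\mathbf{s})=2^r-1$, the identification of the generated fields via Proposition \ref{Uchino}, and the nonvanishing check for the Anderson--Thakur specialization) are exactly the points the paper leaves implicit.
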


We can obtain this theorem by the similar arguments as the proof of Theorem \ref{MainB} by using Proposition \ref{PropDimTrans} and Theorem \ref{ThmBGrouppartial}.

\begin{Remark}

\begin{enumerate}
    \item 
 In order to obtain the algebraic independence of higher Taylor coefficients of the Anderson-Thakur series, the condition $p \nmid s_i$ for all $i$ is indispensable. For example, if we consider the case $r=1$ and $s_1=ps^\prime$ for some $s^\prime \in \mathbb{Z}_{\geq 1}$, then the relation $\zeta_A(s^\prime p)=\zeta_A(s^\prime)^p$, which is known as $p$-th power relation and can be confirmed easily, lifts to the equation
\begin{equation}
    \zeta_A^{\mathrm{AT}}(p s^\prime)=\gamma \zeta_A^{\mathrm{AT}}(s^\prime)^p
\end{equation}
of Anderson Thakur series with some explicit $\gamma \in \mathbb{F}_q(t)^\times$ (see \cite[Lemma 3.2]{Mishiba2015}). Taking hyperderivatives of both sides, we have the following relation:
\begin{align}
    \partial\zeta_A^{\mathrm{AT}}(ps^\prime)&=\zeta_A^{\mathrm{AT}}(s^\prime)^p \partial \gamma+p\gamma \zeta_A^{\mathrm{AT}}(s^\prime)^{p-1}\partial \zeta_A^{\mathrm{AT}}(s^\prime)\\
    &=\zeta_A^{\mathrm{AT}}(s^\prime)^p \partial \gamma=\frac{\zeta_A^{\mathrm{AT}}(ps^\prime)\partial \gamma}{\gamma}.
\end{align}
Substituting $t$ by $\theta$, we obtain a non-trivial $K$-linear relation among the $0$-the and the first Taylor coefficients of $\zeta_A^{\mathrm{AT}}(ps^\prime)$.
\item
Mishiba (\cite{Mishiba2015} and \cite{Mishiba2017}) obtained the equality
\begin{equation}
    \trdeg_{\overline{K}}\overline{K}\left( \tilde{\pi},\,\zeta_A(\mathbf{s}^\prime) \mid \mathbf{s}^\prime \in \operatorname{Sub}^{\prime}(\mathbf{s})\right)=2^r
\end{equation}
under the same assumptions as those in Theorem \ref{MainBpartial}.
In a private discussion with Mishiba, he told the author that he had obtained algebraic independence of the set
    \begin{equation}
        \{\alpha_0,\,\dots,\,\alpha_n\} \cup \{\alpha_{\mathbf{s}^\prime,\,0} \mid \mathbf{s}^\prime\in \operatorname{Sub}^\prime(\mathbf{s}) \}\cup \{\alpha_{(s_j),\,1} \mid 1\leq j \leq r\}
    \end{equation}
    under the same notation and assumption as those of Theorem \ref{MainB}. Our results are regarded as a generalizations of his results.
\item
Based on the work of Anderson and Thakur in \cite{Anderson2009}, Chang and Mishiba (\cite{Chang2021}) constructed a $t$-module and its special point for each multiple zeta value and showed that the multiple zeta value in question occurs in an entry of the logarithm of the $t$-module at the special point. Chang, Green, and Mishiba \cite{Chang2021a} showed that the other entries of the logarithm at the point mentioned above can be written in terms of Taylor coefficients of the Anderson-Thakur series and $t$-motivic CMPLs.
\end{enumerate}
\end{Remark}

\noindent \textbf{Acknowledgement}.
The author would like to thank his advisor H.~Furusho for his profound instruction and continuous encouragements. 
This work was partially supported by JSPS KAKENHI Grant Number JP23KJ1079.


\begin{thebibliography}{10}

\bibitem{Anderson1986}
G.~W. Anderson.
\newblock {$t$}-motives.
\newblock {\em Duke Math. J.}, 53(2):457--502, 1986.

\bibitem{Anderson2004}
G.~W. Anderson, W.~D. Brownawell, and M.~A. Papanikolas.
\newblock Determination of the algebraic relations among special
  {$\Gamma$}-values in positive characteristic.
\newblock {\em Ann. of Math. (2)}, 160(1):237--313, 2004.

\bibitem{Anderson1990}
G.~W. Anderson and D.~S. Thakur.
\newblock Tensor powers of the {C}arlitz module and zeta values.
\newblock {\em Ann. of Math. (2)}, 132(1):159--191, 1990.

\bibitem{Anderson2009}
G.~W. Anderson and D.~S. Thakur.
\newblock Multizeta values for {$\Bbb F_q[t]$}, their period interpretation,
  and relations between them.
\newblock {\em Int. Math. Res. Not. IMRN}, (11):2038--2055, 2009.

\bibitem{Broadhurst1995}
D.~J. Broadhurst and D.~Kreimer.
\newblock Knots and numbers in {$\phi^4$} theory to {$7$} loops and beyond.
\newblock {\em Internat. J. Modern Phys. C}, 6(4):519--524, 1995.

\bibitem{Brown2012}
F.~Brown.
\newblock Mixed {T}ate motives over {$\Bbb Z$}.
\newblock {\em Ann. of Math. (2)}, 175(2):949--976, 2012.

\bibitem{Carlitz1935}
L.~Carlitz.
\newblock On certain functions connected with polynomials in a {G}alois field.
\newblock {\em Duke Math. J.}, 1(2):137--168, 1935.

\bibitem{Chang2014}
C.-Y. Chang.
\newblock Linear independence of monomials of multizeta values in positive
  characteristic.
\newblock {\em Compos. Math.}, 150(11):1789--1808, 2014.

\bibitem{Chang2023}
C.-Y. Chang, Y.-T. Chen, and Y.~Mishiba.
\newblock On {T}hakur's basis conjecture for multiple zeta values in positive
  characteristic.
\newblock {\em Forum Math. Pi}, 11:Paper No. e26, 2023.

\bibitem{Chang2021a}
C.-Y. Chang, N.~Green, and Y.~Mishiba.
\newblock Taylor coefficients of {A}nderson-{T}hakur series and explicit
  formulae.
\newblock {\em Math. Ann.}, 379(3-4):1425--1474, 2021.

\bibitem{Chang2019a}
C.-Y. Chang and Y.~Mishiba.
\newblock On multiple polylogarithms in characteristic {$p$}: {$v$}-adic
  vanishing versus {$\infty$}-adic {E}ulerianness.
\newblock {\em Int. Math. Res. Not. IMRN}, (3):923--947, 2019.

\bibitem{Chang2021}
C.-Y. Chang and Y.~Mishiba.
\newblock On a conjecture of {F}urusho over function fields.
\newblock {\em Invent. Math.}, 223(1):49--102, 2021.

\bibitem{Chang2007}
C.-Y. Chang and J.~Yu.
\newblock Determination of algebraic relations among special zeta values in
  positive characteristic.
\newblock {\em Adv. Math.}, 216(1):321--345, 2007.

\bibitem{ChenThesis}
Y.-T. Chen.
\newblock {\em Arithmetic of Drinfeld modules and multiple zeta values in
  positive characteristic}.
\newblock PhD thesis, National Tsing Hua University, 2022.

\bibitem{Conrad2015}
B.~Conrad.
\newblock The structure of solvable groups over general fields.
\newblock In {\em Autours des sch\'{e}mas en groupes. {V}ol. {II}}, volume~46
  of {\em Panor. Synth\`eses}, pages 159--192. Soc. Math. France, Paris, 2015.

\bibitem{Deligne2005}
P.~Deligne and A.~B. Goncharov.
\newblock Groupes fondamentaux motiviques de {T}ate mixte.
\newblock {\em Ann. Sci. \'{E}cole Norm. Sup. (4)}, 38(1):1--56, 2005.

\bibitem{Deligne1982}
P.~Deligne, J.~S. Milne, A.~Ogus, and K.-y. Shih.
\newblock {\em Hodge cycles, motives, and {S}himura varieties}, volume 900 of
  {\em Lecture Notes in Mathematics}.
\newblock Springer-Verlag, Berlin-New York, 1982.

\bibitem{Im2022}
B.-H. Im, H.~Kim, K.~N. Le, T.~Ngo Dac, and L.~H. Pham.
\newblock {Z}agier-{H}offman's conjectures in positive characteristic,
	arXiv:2205.07165 [math.NT] (2022).

\bibitem{Le1995}
T.~Q.~T. Le and J.~Murakami.
\newblock Kontsevich's integral for the {H}omfly polynomial and relations
  between values of multiple zeta functions.
\newblock {\em Topology Appl.}, 62(2):193--206, 1995.

\bibitem{Maurischat2018}
A.~Maurischat.
\newblock Prolongations of {$t$}-motives and algebraic independence of periods.
\newblock {\em Doc. Math.}, 23:815--838, 2018.

\bibitem{Maurischat2022}
A.~Maurischat.
\newblock Algebraic independence of the {C}arlitz period and its
  hyperderivatives.
\newblock {\em J. Number Theory}, 240:145--162, 2022.

\bibitem{Mishiba2015a}
Y.~Mishiba.
\newblock Algebraic independence of the {C}arlitz period and the positive
  characteristic multizeta values at {$n$} and {$(n,n)$}.
\newblock {\em Proc. Amer. Math. Soc.}, 143(9):3753--3763, 2015.

\bibitem{Mishiba2015}
Y.~Mishiba.
\newblock {$p$}-th power relations and {E}uler-{C}arlitz relations among
  multizeta values.
\newblock In {\em Algebraic number theory and related topics 2013}, RIMS
  K\^{o}ky\^{u}roku Bessatsu, B53, pages 13--29. Res. Inst. Math. Sci. (RIMS),
  Kyoto, 2015.

\bibitem{Mishiba2017}
Y.~Mishiba.
\newblock On algebraic independence of certain multizeta values in
  characteristic {$p$}.
\newblock {\em J. Number Theory}, 173:512--528, 2017.

\bibitem{Namoijam2022}
C.~Namoijam and M.~A. Papanikolas.
\newblock Hyperderivatives of periods and quasi-periods for anderson
  $t$-modules, 2022.

\bibitem{Papanikolas2008}
M.~A. Papanikolas.
\newblock Tannakian duality for {A}nderson-{D}rinfeld motives and algebraic
  independence of {C}arlitz logarithms.
\newblock {\em Invent. Math.}, 171(1):123--174, 2008.

\bibitem{Terasoma2002}
T.~Terasoma.
\newblock Mixed {T}ate motives and multiple zeta values.
\newblock {\em Invent. Math.}, 149(2):339--369, 2002.

\bibitem{ThakurBook}
D.~S. Thakur.
\newblock {\em Function field arithmetic}.
\newblock World Scientific Publishing Co., Inc., River Edge, NJ, 2004.

\bibitem{Thakur2010}
D.~S. Thakur.
\newblock Shuffle relations for function field multizeta values.
\newblock {\em Int. Math. Res. Not. IMRN}, (11):1973--1980, 2010.

\bibitem{Uchino1998}
Y.~Uchino and T.~Satoh.
\newblock Function field modular forms and higher-derivations.
\newblock {\em Math. Ann.}, 311(3):439--466, 1998.

\bibitem{Wade1941}
L.~I. Wade.
\newblock Certain quantities transcendental over {$GF(p^n,x)$}.
\newblock {\em Duke Math. J.}, 8:701--720, 1941.

\bibitem{Yu1991}
J.~Yu.
\newblock Transcendence and special zeta values in characteristic {$p$}.
\newblock {\em Ann. of Math. (2)}, 134(1):1--23, 1991.

\bibitem{Yu1997}
J.~Yu.
\newblock Analytic homomorphisms into {D}rinfeld modules.
\newblock {\em Ann. of Math. (2)}, 145(2):215--233, 1997.

\bibitem{Zudilin2001}
V.~V. Zudilin.
\newblock One of the numbers {$\zeta(5)$}, {$\zeta(7)$}, {$\zeta(9)$},
  {$\zeta(11)$} is irrational.
\newblock {\em Uspekhi Mat. Nauk}, 56(4(340)):149--150, 2001.

\end{thebibliography}

\end{document}